\documentclass{amsart}
\usepackage{pgfplots}
\usepackage{amsmath}
\pgfplotsset{compat=1.18}
\usepackage{sansmath} % optional: math font consistent with sans serif text
\sansmath
\usepackage{graphicx}
\usepackage{slashed}
\usepackage{enumerate}
\usepackage{dsfont}
\usepackage{mathtools,cancel}
\usepackage[bottom]{footmisc}
\usepackage[scr=rsfso]{mathalfa}
\usepackage{tikz-cd}
\usepackage[hidelinks]{hyperref}
{

}
\usepackage{amsfonts}
\usepackage{psfrag}
\usepackage{color}
\usepackage{mathtools}
\usepackage{pgf,tikz}
\usepackage{mathrsfs}
\usetikzlibrary{arrows}
\usepackage{fancyhdr}
\usepackage{amssymb}
\usepackage{slashed}
\usepackage{enumitem}

\usepackage[T1]{fontenc}
\usepackage[utf8]{inputenc}
\newtheorem{definition}{Definition}
\newtheorem{remark}{Remark}

\newtheorem{proposition}{Proposition}[section]

\newtheorem{theorem}{Theorem}[section]
\newtheorem{corollary}{Corollary}[section]
\newtheorem{lemma}{Lemma}[section]

\newcommand{\be}{\begin{equation}}
\newcommand{\ee}{\end{equation}}

\newcommand{\bm}{\begin{align}*}
\newcommand{\enm}{\end{align}*}

\newcommand{\bespeq}{\begin{equation}\begin{split}}
\newcommand{\espeq}{\end{split}\end{equation}}

\renewcommand{\div}{\mbox{div }}

\newcommand{\tr}{\mbox{tr}}

\newcommand\restri[2]{{% we make the whole thing an ordinary symbol
		\left.\kern-\nulldelimiterspace % automatically resize the bar with \right
		#1 % the function
		\right|_{#2} % this is the delimiter
}}

\definecolor{ffqqqq}{rgb}{1.,0.,0.}
\definecolor{uuuuuu}{rgb}{0.26666666666666666,0.26666666666666666,0.26666666666666666}

\makeatletter
\def\ps@pprintTitle{%
  \let\@oddhead\@empty
  \let\@evenhead\@empty
  \let\@oddfoot\@empty
  \let\@evenfoot\@oddfoot
}
\def\@author#1{\g@addto@macro\elsauthors{\normalsize%
    \def\baselinestretch{1}%
    \upshape\authorsep#1\unskip\textsuperscript{%
      \ifx\@fnmark\@empty\else\unskip\sep\@fnmark\let\sep=,\fi
      \ifx\@corref\@empty\else\unskip\sep\@corref\let\sep=,\fi
      }%
    \def\authorsep{\unskip,\space}%
    \global\let\@fnmark\@empty
    \global\let\@corref\@empty  %% Added
    \global\let\sep\@empty}%
    \@eadauthor={#1}
}
\makeatother
\begin{document}

\title{Compact Proof of the Positivity of Quasi-Local Masses for a class of Initial Data}
\author{Puskar Mondal}%Beijing Institute of Mathematical Sciences and Applications, Yau Mathematical Sciences Center}
\author{Shing-Tung Yau}

\maketitle

\begin{abstract}
\noindent We prove a purely quasi-local positivity theorem for the Wang--Yau mass for a class of initial data whose Jang deformation, after a boundary-preserving conformal reduction to zero scalar curvature, lies in a sufficiently small transverse--traceless (TT) perturbative neighborhood of a strictly convex Euclidean fill-in.We explicitly construct a nontrivial class of physical initial data whose admissible Jang reductions realize this TT-generated sector. The argument reduces the Wang--Yau energy to the Brown--York mass of the resulting scalar-flat compact metric, together with nonnegative bulk terms determined by the Jang deformation, and establishes strict positivity by computing the second variation of the Brown--York functional at the Euclidean metric in transverse--traceless directions. The proof is entirely confined to the compact fill-in and uses neither an asymptotically flat extension nor the positive mass theorem. This gives a partial answer to a question of R. Schoen concerning a genuinely quasi-local proof of positivity for quasi-local mass.

\end{abstract}

\setcounter{tocdepth}{2}
{\hypersetup{linkcolor=black}
\small
\tableofcontents
}

\section{Introduction}
\noindent We begin by fixing the spacetime conventions used throughout the paper.  Let
\(({\mathcal M}^{3+1},\widehat g)\) be a smooth, time-oriented, globally
hyperbolic Lorentzian four-manifold with signature \((-+++)\), and let
\(T_{\mu\nu}\) denote the stress-energy tensor of the matter fields.  We work
in geometrized units \(8\pi G=1\), so that the Einstein equations are
\begin{eqnarray}
        \widehat R_{\mu\nu}
        -
        {1\over 2}\widehat R\,\widehat g_{\mu\nu}
        =
        T_{\mu\nu}.
\end{eqnarray}
Here \(\widehat R_{\mu\nu}\) and \(\widehat R\) denote respectively the Ricci curvature
tensor and scalar curvature of the spacetime metric \(\widehat g\).

\noindent The Wang--Yau quasi-local mass is associated with a closed spacelike
two-surface \(\Sigma\subset {\mathcal M}\), usually realized as the boundary
of a compact spacelike hypersurface.  Thus let
\(\Omega\subset {\mathcal M}\) be a smooth compact spacelike hypersurface,
diffeomorphic to a three-ball, with boundary
\begin{eqnarray}
        \partial\Omega=\Sigma .
\end{eqnarray}
Let \(n\) be the future-directed timelike unit normal to \(\Omega\).  In a
local \(3+1\) decomposition adapted to the hypersurface \(\Omega\), one may
write
\begin{eqnarray}
        n=N^{-1}(\partial_t-X),
\end{eqnarray}
where \(N>0\) is the lapse function and \(X=X^i\partial_i\) is the shift vector
field tangent to the \(t=\mathrm{constant}\) slices.  In these coordinates, the
spacetime metric takes the ADM form
\begin{eqnarray}
        \widehat g
        =
        -N^2dt\otimes dt
        +
        g_{ij}(dx^i+X^i dt)\otimes(dx^j+X^j dt).
\end{eqnarray}
The Riemannian metric induced on \(\Omega\) is
\begin{eqnarray}
        g_{ij}:=\widehat g(\partial_i,\partial_j).
\end{eqnarray}

\noindent With the sign convention
\begin{eqnarray}
        K_{ij}
        :=
        \widehat g(\widehat\nabla_{\partial_i}n,\partial_j),
\end{eqnarray}
we denote by \(K\) the second fundamental form of \(\Omega\) in
\(({\mathcal M},\widehat g)\).  The Einstein equations imply the constraint
equations on the initial data set \((\Omega,g,K)\):
\begin{eqnarray}
        R(g)-K_{ij}K^{ij}+(\mathrm{tr}_g K)^2
        =
        2\mu,
        \label{eq:hamiltonian-constraint}
\end{eqnarray}
and
\begin{eqnarray}
        \nabla^jK_{ij}-\nabla_i(\mathrm{tr}_g K)
        =
        J_i .
        \label{eq:momentum-constraint}
\end{eqnarray}
Here \(R(g)\), \(\nabla\), \(|\cdot|_g\), and \(\mathrm{tr}_g\) are computed
with respect to \(g\), while
\begin{eqnarray}
        \mu:=T(n,n),
        \qquad
        J_i:=T(n,\partial_i)
\end{eqnarray}
are respectively the energy density and momentum density measured by the
observers normal to \(\Omega\).

\noindent We assume the dominant energy condition.  In invariant spacetime form, this
means that for every future-directed causal vector \(V\), the vector
\begin{eqnarray}
        -T^\mu{}_\nu V^\nu
\end{eqnarray}
is future-directed causal.  Equivalently, in the \(3+1\) decomposition, the
energy and momentum densities satisfy the pointwise inequality
\begin{eqnarray}
        \mu\geq |J|_g,
        \qquad
        |J|_g:=\sqrt{g^{ij}J_iJ_j}.
        \label{eq:dominant-energy-initial-data}
\end{eqnarray}
Thus the local energy-momentum density \((\mu,J)\) is non-spacelike and
future-directed.  This condition expresses, in geometric form, the causal
propagation of matter-energy.

\noindent A fundamental special case is the time-symmetric regime,
\begin{eqnarray}
        K\equiv 0.
\end{eqnarray}
Then the momentum constraint is identically satisfied, and the Hamiltonian
constraint reduces to
\begin{eqnarray}
        R(g)=2\mu .
\end{eqnarray}
Consequently, under the dominant energy condition,
\begin{eqnarray}
        R(g)\geq 0.
\end{eqnarray}
Thus, in the time-symmetric setting, the Lorentzian dominant energy condition
reduces precisely to the nonnegativity of the scalar curvature of the
Riemannian initial data set \((\Omega,g)\).

\noindent Let \((\Sigma,\sigma)\) be a smooth closed spacelike two-surface in the
physical spacetime \(({\mathcal M},\widehat g)\), with the topology of
\(\mathbb S^2\), and suppose that \(\Sigma\) bounds a smooth compact spacelike
hypersurface \(\Omega\).  Thus
\[
        \partial\Omega=\Sigma,
\]
and \(\sigma\) denotes the Riemannian metric induced on \(\Sigma\) by
\(\widehat g\), equivalently by the induced metric \(g\) on \(\Omega\).  The
spacetime is assumed to satisfy the regularity, time-orientation, global
hyperbolicity, and energy assumptions fixed above.

\noindent The basic quasi-local problem is to assign a precise geometric meaning to the
question:
\[
        \hbox{what is the total gravitational energy contained in }
        \Omega \hbox{ with boundary } \Sigma ?
\]
A satisfactory definition of quasi-local energy should satisfy, at minimum,
the following structural requirements.  First, it should be nonnegative under
an appropriate energy condition on the spacetime, typically the dominant energy
condition.  Second, it should satisfy a rigidity property: it should vanish for
a surface enclosing a domain in the reference flat spacetime, at least in the
appropriate reference configuration.  Third, since gravitational energy has no
local scalar density in general relativity, the definition should be genuinely
geometric and should detect both the contribution of matter fields through
the stress-energy tensor and the contribution of the free gravitational field,
encoded in the curvature of the ambient spacetime.

\noindent The Brown--York and Liu--Yau constructions constitute two fundamental
precursors to the Wang--Yau theory.  Motivated by the Hamilton--Jacobi analysis
of the gravitational action, Brown and York
\cite{brown1992quasilocal,brown1993quasilocal} defined a quasi-local energy by
comparing the mean curvature of the physical surface with the mean curvature
of an isometric embedding of the same intrinsic two-geometry into Euclidean
space.  The Liu--Yau mass
\cite{liu2003positivity,liu2006positivity} modifies this construction by using
the spacetime mean-curvature vector.  Both constructions rely on the classical
isometric embedding theory for metrics of positive Gaussian curvature on
\(\mathbb S^2\), in particular the Weyl--Pogorelov theorem
\cite{pogorelov1952regularity}, which guarantees the existence and uniqueness,
up to rigid motions, of an isometric embedding into \(\mathbb R^3\) (Note \cite{pyau2} for a notion of mass including gauge fields).

\begin{definition}[Brown--York mass]
Let \(\Sigma\) be a smooth spacelike two-sphere in the spacetime
\(({\mathcal M},\widehat g)\), and suppose that \(\Sigma=\partial\Omega\) for
a smooth compact spacelike hypersurface \((\Omega,g)\).  Let
\[
        \sigma=g|_{\Sigma}
\]
be the induced metric on \(\Sigma\), and assume that its Gaussian curvature is
strictly positive:
\[
        K_{\sigma}>0.
\]
By the Weyl--Pogorelov isometric embedding theorem, there exists an isometric
embedding
\[
        i:(\Sigma,\sigma)\longrightarrow (\mathbb R^3,\delta),
\]
unique up to Euclidean rigid motions.  Let \(k_0\) denote the mean curvature of
the embedded surface \(i(\Sigma)\subset\mathbb R^3\), computed with respect to
the outward unit normal in Euclidean space.  Let \(k\) denote the mean
curvature of \(\Sigma\subset\Omega\), computed with respect to the outward
unit normal in the physical spacelike hypersurface \((\Omega,g)\).  Then the
Brown--York mass of \(\Sigma\) relative to \(\Omega\) is
\begin{eqnarray}
        m_{\mathrm{BY}}(\Sigma,\Omega)
        :=
        {1\over 8\pi}
        \int_{\Sigma}(k_0-k)\,d\mu_{\sigma}.
        \label{eq:brown-york-mass}
\end{eqnarray}
Here \(d\mu_{\sigma}\) is the area form of the induced metric \(\sigma\).
\end{definition}
The Liu-Yau mass is defined similarly with mean curvature $k$ replaced by the Lorentzian norm of the spacetime mean curvature of $\Sigma$ in $(M,\widehat{g})$ i.e., 
\begin{definition}[Li-Yau Mass]
Assume that the topological $2-$sphere $( \Sigma,\sigma)$ bounds a topogical ball $\Omega$ in the spacetime $(M,\widehat{g})$. Also assume that the Gaussian curvature of $( \Sigma, \sigma)$ is strictly positive. The Liu-Yau mass is defined as follows
 \begin{eqnarray}
 M^{LY}:=\frac{1}{8\pi}\int_{\Sigma}(k_{0}-|H|)\mu_{\sigma},   
\end{eqnarray}   
where $|H|$ is the Lorentzian norm of the mean curvature vector of $(\Sigma,\sigma)$ in the spacetime $(M,\widehat{g})$.
\end{definition}

\noindent The Brown--York and Liu--Yau constructions satisfy natural positivity
properties under their respective geometric hypotheses; nevertheless,
this positivity is, in an important sense, excessively strong. As
observed in \cite{murchadha2004comment}, there exist spacelike
$2$-surfaces embedded in Minkowski spacetime for which both the
Brown--York and Liu--Yau masses are strictly positive. Such examples
demonstrate that these quantities do not, by themselves, vanish on all
surfaces contained in the reference spacetime and therefore do not fully
encode the cancellation between intrinsic geometry and spacetime
momentum that one expects from a physically normalized quasi-local
energy. This deficiency is closely related to the fact that the
reference term in these definitions is determined essentially by the
intrinsic geometry of the surface and the spacelike hypersurface it is embedded in, while the full normal-bundle and
momentum information of the embedding in spacetime is not incorporated
in a sufficiently rigid manner.

Motivated by this issue, Wang and Yau introduced in
\cite{yau,yau1} a quasi-local energy in which the reference
configuration is specified by an isometric embedding of the surface into
Minkowski spacetime together with a choice of observer, and in which the
physical and reference Hamiltonian data are compared after optimizing
over the corresponding time function. The resulting Wang--Yau mass
satisfies the expected rigidity property for surfaces in Minkowski
spacetime and provides a substantially sharper incorporation of the
spacetime momentum degrees of freedom. We also note that Alaee, Khuri,
and Yau have recently proposed a different quasi-local mass
construction for surfaces satisfying certain topological hypotheses; see
\cite{alaee2023quasi}. In what follows, we recall the Wang--Yau
definition in the form adapted to our subsequent analysis.

\noindent Let $\Sigma$ be a spacelike topological $2$--sphere in the physical
spacetime $(M,\widehat g)$, and assume throughout that its mean
curvature vector $\mathbf H$ is spacelike. Associated with $\mathbf H$
is the future-directed timelike normal vector $\mathbf J$, obtained by
Lorentzian reflection of $\mathbf H$ across the future outgoing null
direction in the normal bundle of $\Sigma$. Equivalently, $\mathbf J$
is characterized, up to the prescribed time orientation, by
\[
\widehat g(\mathbf H,\mathbf J)=0,
\qquad
\widehat g(\mathbf J,\mathbf J)
=
-\widehat g(\mathbf H,\mathbf H)
=
-|\mathbf H|_{\widehat g}^{2},
\]
with $\mathbf J$ future directed.

\noindent The geometric data entering the Wang--Yau construction are the triple
\[
\left(
\sigma,\,
|\mathbf H|_{\widehat g},\,
\alpha_{\mathbf H}
\right),
\]
where $\sigma$ is the metric induced on $\Sigma$ by $\widehat g$,
$|\mathbf H|_{\widehat g}$ is the Lorentzian norm of the spacelike mean
curvature vector, and $\alpha_{\mathbf H}$ is the connection
$1$--form of the normal bundle in the mean-curvature gauge. More
precisely,
\begin{equation}
\label{eq:connection}
\alpha_{\mathbf H}(Y)
:=
\widehat g
\left(
\widehat\nabla_Y
\frac{\mathbf J}{|\mathbf H|_{\widehat g}},
\frac{\mathbf H}{|\mathbf H|_{\widehat g}}
\right),
\qquad
Y\in T\Sigma .
\end{equation}

\noindent Choose an orthonormal frame $\{e_3,e_4\}$ of the normal bundle
$T^\perp\Sigma$ satisfying
\[
\widehat g(e_3,e_3)=1,
\qquad
\widehat g(e_4,e_4)=-1,
\qquad
\widehat g(e_3,e_4)=0,
\]
where $e_4$ is taken to be future-directed.  Thus, relative to any
future-directed timelike vector field $\partial_t$ compatible with the
chosen time orientation,
\[
\widehat g(e_4,\partial_t)<0.
\]

\noindent We next choose an isometric embedding
\[
i_0:(\Sigma,\sigma)\longrightarrow
(\mathbb R^{1,3},\eta).
\]
In local coordinates $\{x^a\}_{a=1}^{2}$ on $\Sigma$, writing
\[
i_0(x)=X^\mu(x)\frac{\partial}{\partial X^\mu},
\]
the isometric embedding condition is
\begin{equation}
\label{eq:isometric-embedding-Minkowski}
\sigma_{ab}
=
\eta_{\mu\nu}
\frac{\partial X^\mu}{\partial x^a}
\frac{\partial X^\nu}{\partial x^b}.
\end{equation}
Let $\{e_{30},e_{40}\}$ be an orthonormal frame of the normal bundle
of $i_0(\Sigma)$ in Minkowski spacetime, chosen so that
\[
\eta(e_{30},e_{30})=1,
\qquad
\eta(e_{40},e_{40})=-1,
\qquad
\eta(e_{30},e_{40})=0,
\]
with $e_{40}$ future directed.  The physical and reference normal
frames are understood to be chosen in the Wang--Yau canonical gauges
associated with the prescribed time function.

\noindent Let $T_0$ be a fixed future-directed unit timelike vector in
$\mathbb R^{1,3}$.  The time function associated with the reference
embedding is
\begin{equation}
\label{eq:tau-reference}
\tau
:=
-\eta(i_0,T_0).
\end{equation}
In the standard inertial frame, $T_0=\partial_t$, so that $\tau$ is
precisely the restriction of the Minkowski time coordinate to
$i_0(\Sigma)$.

\noindent We now recall the Wang--Yau quasi-local energy in the form that will be
used below.

\begin{definition}[Wang--Yau quasi-local energy]
\label{wy14}

Let $(M,\widehat g)$ be a time-oriented Lorentzian spacetime satisfying
the dominant energy condition, and let
$\Sigma\subset M$ be an embedded spacelike topological $2$--sphere
whose mean curvature vector $\mathbf H$ is spacelike.  Let
\[
i_0:(\Sigma,\sigma)\longrightarrow\mathbb R^{1,3}
\]
be an isometric embedding, let $T_0$ be a constant future-directed unit
timelike vector, and let
\[
\tau=-\eta(i_0,T_0)
\]
be the associated time function.  Denote by $\mathbf H_0$ the mean
curvature vector of $i_0(\Sigma)\subset\mathbb R^{1,3}$.

\noindent For an admissible time function $\tau$ in the sense of
Definition \ref{admi}, the corresponding Wang--Yau energy functional is
given by
\begin{align}
8\pi E_{\rm WY}(\Sigma,\tau)
={}&
\int_{\Sigma}
\left(
-\sqrt{1+|\nabla\tau|_{\sigma}^{2}}\,
\eta(\mathbf H_0,e_{30})
-
\eta(
\nabla^{\eta}_{\nabla\tau}e_{30},
e_{40})
\right)
d\mu_{\sigma}
\nonumber\\
&-
\int_{\Sigma}
\left(
-\sqrt{1+|\nabla\tau|_{\sigma}^{2}}\,
\widehat g(\mathbf H,e_3)
-
\widehat g(
\widehat\nabla_{\nabla\tau}e_3,
e_4)
\right)
d\mu_{\sigma}.
\label{eq:wy-energy-functional}
\end{align}
The Wang--Yau quasi-local mass is then defined by minimizing over the
admissible class:
\begin{equation}
\label{eq:WY-mass-inf}
M_{\rm WY}(\Sigma)
:=
\inf_{\tau\in\mathcal A(\Sigma)}
E_{\rm WY}(\Sigma,\tau),
\end{equation}
where $\mathcal A(\Sigma)$ denotes the set of admissible time
functions.

\end{definition}

\noindent The first integral in \eqref{eq:wy-energy-functional} represents the
reference Hamiltonian contribution determined by the isometric
embedding into Minkowski spacetime, whereas the second integral is the
corresponding physical Hamiltonian contribution determined by the
embedding of $\Sigma$ in $(M,\widehat g)$.  In particular, the
connection terms retain the normal-bundle information and hence encode
the momentum degrees of freedom that are absent from purely
Riemannian quasi-local constructions.

\noindent For the purposes of the present work it is useful to separate the
minimization over $\tau$ from the positivity problem.  Accordingly, for
each fixed admissible time function $\tau$ we introduce the
\emph{reduced Wang--Yau energy}
\begin{align}
8\pi E_{WY}(\Sigma,\tau)
:={}&
\int_{\Sigma}
\left(
-\sqrt{1+|\nabla\tau|_{\sigma}^{2}}\,
\eta(\mathbf H_0,e_{30})
-
\eta(
\nabla^{\eta}_{\nabla\tau}e_{30},
e_{40})
\right)
d\mu_{\sigma}
\nonumber\\
&-
\int_{\Sigma}
\left(
-\sqrt{1+|\nabla\tau|_{\sigma}^{2}}\,
\widehat g(\mathbf H,e_3)
-
\widehat g(
\widehat\nabla_{\nabla\tau}e_3,
e_4)
\right)
d\mu_{\sigma}.
\label{eq:wy2}
\end{align}
Thus
\begin{equation}
\label{eq:WY-mass-reduced-inf}
M_{\rm WY}(\Sigma)
=
\inf_{\tau\in\mathcal A(\Sigma)}
E_{WY}(\Sigma,\tau).
\end{equation}
Our immediate objective will therefore be to establish
\begin{equation}
\label{eq:reduced-WY-positivity-goal}
E_{WY}(\Sigma,\tau)\geq0
\end{equation}
for the class of admissible time functions and perturbative initial
data considered below.  Once such an estimate is available uniformly
over the relevant admissible class, the non-negativity of the
Wang--Yau quasi-local mass follows directly from
\eqref{eq:WY-mass-reduced-inf}.

\noindent There remains a gauge freedom in the choice of orthonormal frame of the
Lorentzian normal bundle $T^\perp\Sigma$.  More precisely, let
$\{e_3,e_4\}$ be an oriented orthonormal frame satisfying
\[
\widehat g(e_3,e_3)=1,
\qquad
\widehat g(e_4,e_4)=-1,
\qquad
\widehat g(e_3,e_4)=0,
\]
with $e_4$ future directed.  Any other oriented orthonormal frame
$\{e_3',e_4'\}$ with $e_4'$ future directed is related to
$\{e_3,e_4\}$ by a unique proper orthochronous Lorentz boost
\begin{equation}
\label{eq:normal-boost}
\begin{split}
e_3'
&=
\cosh\varphi\,e_3+\sinh\varphi\,e_4,\\
e_4'
&=
\sinh\varphi\,e_3+\cosh\varphi\,e_4,
\end{split}
\end{equation}
for some smooth boost parameter
\[
\varphi\in C^\infty(\Sigma).
\]
The same freedom is present in the reference normal frame
$\{e_{30},e_{40}\}$ along the Minkowski embedding $i_0(\Sigma)$.

\noindent Consequently, before fixing the normal gauge, the Hamiltonian boundary
density appearing in the physical contribution to the Wang--Yau energy
depends on the choice of $\varphi$.  To make this dependence explicit,
set
\[
a:=\sqrt{1+|\nabla\tau|_\sigma^2}.
\]
For the boosted frame \eqref{eq:normal-boost}, the physical Hamiltonian
density is
\begin{equation}
\label{eq:physical-Hamiltonian-boost}
\mathcal H_{\rm phys}(\varphi)
:=
-a\,\widehat g(\mathbf H,e_3')
-
\widehat g
\left(
\widehat\nabla_{\nabla\tau}e_3',
e_4'
\right).
\end{equation}
The normal connection transforms according to
\begin{equation}
\label{eq:normal-connection-boost}
\widehat g
\left(
\widehat\nabla_Ye_3',
e_4'
\right)
=
\widehat g
\left(
\widehat\nabla_Ye_3,
e_4
\right)
-
Y(\varphi),
\qquad
Y\in T\Sigma,
\end{equation}
where the sign is determined by the convention
$\widehat g(e_4,e_4)=-1$.  Moreover,
\begin{equation}
\label{eq:H-boost}
\widehat g(\mathbf H,e_3')
=
\cosh\varphi\,
\widehat g(\mathbf H,e_3)
+
\sinh\varphi\,
\widehat g(\mathbf H,e_4).
\end{equation}
Thus
\begin{equation}
\label{eq:Hamiltonian-boost-expanded}
\begin{split}
\mathcal H_{\rm phys}(\varphi)
={}&
-a\left(
\cosh\varphi\,
\widehat g(\mathbf H,e_3)
+
\sinh\varphi\,
\widehat g(\mathbf H,e_4)
\right)
\\
&-
\widehat g
\left(
\widehat\nabla_{\nabla\tau}e_3,e_4
\right)
+
\nabla\tau(\varphi).
\end{split}
\end{equation}
After integration over the closed surface $\Sigma$, the last term may
be integrated by parts:
\begin{equation}
\label{eq:boost-integration-by-parts}
\int_\Sigma\nabla\tau(\varphi)\,d\mu_\sigma
=
-\int_\Sigma
\varphi\,\Delta_\sigma\tau\,d\mu_\sigma.
\end{equation}
Hence the dependence of the physical Hamiltonian on the normal-frame
choice is reduced to a scalar variational problem for the boost
parameter $\varphi$.

\noindent The crucial observation of Wang and Yau is that, when the mean
curvature vector $\mathbf H$ is spacelike, this boost-dependent
Hamiltonian possesses a canonical minimizing gauge.  Equivalently, the
normal-frame redundancy can be eliminated by minimizing the physical
Hamiltonian contribution over the proper orthochronous boosts of
$T^\perp\Sigma$.  The Euler--Lagrange equation of this pointwise
convex minimization determines the boost parameter uniquely and selects
the canonical normal frame used in the definition of the Wang--Yau
energy.  In particular, the resulting quasi-local energy is independent
of the auxiliary choice of orthonormal frame in the normal bundle.

\noindent An entirely analogous gauge fixing is performed for the reference
normal frame along $i_0(\Sigma)\subset\mathbb R^{1,3}$.  We shall
therefore always understand the normal frames entering the Wang--Yau
functional to be chosen in their corresponding canonical gauges.  We
refer to Section~2 of \cite{yau} for the detailed derivation of the
boost minimization and the associated canonical gauge condition.

\noindent This Wang-Yau quasi-local mass possesses several good properties that are desired on physical ground. This mass is strictly positive for $2-$surfaces bounding a space-like domain in a curved spacetime that satisfies the dominant energy condition and it identically vanishes for any such $2-$surface in Minkowski spacetime. In addition, it coincides with the ADM mass at space-like infinity \cite{wang2010limit} and Bondi-mass at null infinity \cite{chen2011evaluating} and reproduces the time component of the Bel-Robinson tensor (a pure gravitational entity) together with the matter stress-energy at the small sphere limit, that is when the $2-$sphere of interest is evolved by the flow of its null geodesic generators and the vertex of the associated null cone is approached \cite{chen2018evaluating}. In addition, explicit conservation laws were also discovered at the asymptotic infinity \cite{chen2015conserved}.

\noindent The Wang--Yau energy admits a reformulation in terms of purely
Riemannian boundary geometry associated with the Jang deformation.
This reduction is a central ingredient in the positivity argument and
proceeds in two steps, corresponding respectively to the reference and
physical contributions to the Wang--Yau Hamiltonian.  We summarize the
construction here and defer the detailed derivations to
propositions~\ref{prop:reference-Hamiltonian-reduction} and \ref{mean2}.

\noindent We first consider the reference term. Let
\[
i_{0}:(\Sigma,\sigma)\longrightarrow
(\mathbb{R}^{1,3},\eta)
\]
be an isometric embedding and write
\[
i_{0}(\Sigma)
=
\bigl(X^{0},X^{1},X^{2},X^{3}\bigr).
\]
Fix a future-directed constant unit timelike vector
$T_{0}\in\mathbb{R}^{1,3}$ and define the corresponding time function
on $\Sigma$ by
\[
\tau:=-\langle X,T_{0}\rangle_{\eta}.
\]
Let
\[
\widehat{i}_{0}(\widehat{\Sigma})
\subset
T_{0}^{\perp}\simeq\mathbb{R}^{3}
\]
denote the orthogonal projection of $i_{0}(\Sigma)$ onto the spacelike
hyperplane orthogonal to $T_{0}$.  The induced metric on the projected
surface is
\begin{equation}
\label{eq:projected-sigma}
\widehat{\sigma}
=
\sigma+d\tau\otimes d\tau.
\end{equation}
Let $k_{0}$ denote the mean curvature of
$\widehat{i}_{0}(\widehat{\Sigma})$ in
$T_{0}^{\perp}\simeq\mathbb{R}^{3}$, computed with respect to the
outward Euclidean unit normal.  The reference Hamiltonian contribution
then satisfies the identity
\begin{equation}
\label{eq:red1}
\int_{\widehat{i}_{0}(\widehat{\Sigma})}
k_{0}\,d\mu_{\widehat{\sigma}}
=
\int_{\Sigma}
\left(
-\sqrt{1+|\nabla\tau|_{\sigma}^{2}}\,
\langle\mathbf{H}_{0},e_{30}\rangle_{\eta}
-
\left\langle
\nabla^{\eta}_{\nabla\tau}e_{30},
e_{40}
\right\rangle_{\eta}
\right)
d\mu_{\sigma}.
\end{equation}
Thus the full reference contribution to the Wang--Yau energy is
identified with the total mean curvature of the Euclidean projection
of the reference embedding.  The proof of \eqref{eq:red1} is given in
Theorem~\ref{prop:reference-Hamiltonian-reduction}.

\noindent The reduction of the physical contribution requires the Jang
deformation.  Let $(\Omega,g,p)$ be a spacelike initial data set in the
physical spacetime $(M,\widehat g)$, with
\[
\partial\Omega=\Sigma,
\]
where $g$ is the induced Riemannian metric on $\Omega$ and $p$ is the
second fundamental form of $\Omega$ in $(M,\widehat g)$ with respect to
a chosen future-directed timelike unit normal.  Consider the Riemannian
product
\[
(\Omega\times\mathbb R,g+dt^{2})
\]
and extend $p$ trivially in the $\mathbb R$--direction to a symmetric
$2$--tensor $P$ on $\Omega\times\mathbb R$.  Thus, if $v$ denotes the
unit vector tangent to the $\mathbb R$--factor, then
\begin{equation}
\label{eq:P-extension}
P(v,\cdot)=0,
\qquad
P|_{T\Omega\times T\Omega}=p.
\end{equation}

\noindent Let
\[
f:\Omega\longrightarrow\mathbb R
\]
be a function satisfying
\[
f|_{\Sigma}=\tau,
\]
and consider its graph
\begin{equation}
\label{eq:Jang-graph-definition}
\widehat{\Omega}
:=
\left\{
(x,f(x)):x\in\Omega
\right\}
\subset
\Omega\times\mathbb R.
\end{equation}
The metric induced on $\widehat{\Omega}$ is
\begin{equation}
\label{eq:Jang-metric}
\widehat g_{ij}
=
g_{ij}+f_{i}f_{j}.
\end{equation}
Jang's equation prescribes that the mean curvature of
$\widehat{\Omega}$ in the product manifold agree with the trace of the
restriction of $P$ to $T\widehat{\Omega}$.  In local coordinates this
takes the form
\begin{equation}
\label{eq:Jang2}
\left(
g^{ij}
-
\frac{f^{i}f^{j}}
{1+|\widetilde{\nabla}f|_{g}^{2}}
\right)
\left(
\frac{
\widetilde{\nabla}_{i}\widetilde{\nabla}_{j}f
}{
\sqrt{1+|\widetilde{\nabla}f|_{g}^{2}}
}
-
P_{ij}
\right)
=
0
\qquad\text{in }\Omega,
\end{equation}
together with the Dirichlet condition
\begin{equation}
\label{eq:Jang-Dirichlet}
f=\tau
\qquad\text{on }\Sigma.
\end{equation}
Here $\widetilde{\nabla}$ denotes the Levi--Civita connection of $g$,
and
\[
f^{i}:=g^{ij}\widetilde{\nabla}_{j}f.
\]
The analysis of Jang's equation originates in the work of
Schoen--Yau \cite{schoen1979proof,schoen1981proof} in their proof of the
positive mass theorem.  The Dirichlet problem relevant to the
Wang--Yau construction is treated in \cite{yau}.

\noindent Assume that \eqref{eq:Jang2}--\eqref{eq:Jang-Dirichlet} admits a smooth
solution.  Then
$\widehat{\Sigma}:=\partial\widehat{\Omega}$
is the graph of $\tau$ over $\Sigma$, and its induced metric is
\begin{equation}
\label{eq:Jang-boundary-metric}
\widehat g|_{T\widehat{\Sigma}}
=
\sigma+d\tau\otimes d\tau
=
\widehat{\sigma}.
\end{equation}
Consequently,
$(\widehat{\Sigma},\widehat{\sigma})$
is isometric to the Euclidean projected surface
$\widehat{i}_{0}(\widehat{\Sigma})$.  This identity of the boundary
metrics is essential: it allows the physical and reference
Hamiltonians to be expressed on the same Riemannian $2$--surface.

\noindent Let
$\{\widetilde e_{\alpha}\}_{\alpha=1}^{4}$
be an orthonormal frame along $\widehat{\Omega}$ in
$\Omega\times\mathbb R$, chosen so that
\[
\{\widetilde e_{i}\}_{i=1}^{3}
\subset T\widehat{\Omega},
\]
while $\widetilde e_{4}$ is the downward unit normal to
$\widehat{\Omega}$.  Along $\widehat{\Sigma}$, let
$\widetilde e_{3}$ denote the outward unit normal to
$\widehat{\Sigma}$ within $\widehat{\Omega}$, and let
$\widetilde k$ be the corresponding mean curvature.

\noindent The normal $e_{3}'$ along the physical surface $\Sigma$ is chosen in
the canonical gauge determined by the Jang solution.  With this choice,
the physical Hamiltonian contribution satisfies
\begin{equation}
\label{eq:red2}
\begin{split}
&
\int_{\Sigma}
\left(
-\sqrt{1+|\nabla\tau|_{\sigma}^{2}}\,
\langle\mathbf H,e_{3}'\rangle_{\widehat g}
-
\alpha_{e_{3}'}(\nabla\tau)
\right)
d\mu_{\sigma}
\\
&\qquad =
\int_{\widehat{\Sigma}}
\left(
\widetilde k
-
\left\langle
\widetilde{\nabla}_{\widetilde e_{4}}
\widetilde e_{4},
\widetilde e_{3}
\right\rangle
+
P(\widetilde e_{4},\widetilde e_{3})
\right)
d\mu_{\widehat{\sigma}}.
\end{split}
\end{equation}
The proof of \eqref{eq:red2} is given in
Theorem~\ref{mean2}. Combining \eqref{eq:red1} and \eqref{eq:red2}, the reduced Wang-Yau
energy admits the representation
\begin{equation}
\label{eq:reduced}
\begin{split}
8\pi E_{WY}(\Sigma,\tau)
={}&
\int_{\widehat{\Sigma}}
k_{0}\,d\mu_{\widehat{\sigma}}
-
\int_{\widehat{\Sigma}}
\left(
\widetilde k
-
\left\langle
\widetilde{\nabla}_{\widetilde e_{4}}
\widetilde e_{4},
\widetilde e_{3}
\right\rangle
+
P(\widetilde e_{4},\widetilde e_{3})
\right)
d\mu_{\widehat{\sigma}}.
\end{split}
\end{equation}

\noindent It is convenient to introduce the $1$--form, equivalently the vector
field, $X$ on $\widehat{\Omega}$ by
\begin{equation}
\label{eq:Jang-X-definition}
X^{\flat}
:=
\left\langle
\widetilde{\nabla}_{\widetilde e_{4}}
\widetilde e_{4},
\,\cdot\,
\right\rangle
-
P(\widetilde e_{4},\,\cdot\,).
\end{equation}
Then
\[
\left\langle X,\widetilde e_{3}\right\rangle
=
\left\langle
\widetilde{\nabla}_{\widetilde e_{4}}
\widetilde e_{4},
\widetilde e_{3}
\right\rangle
-
P(\widetilde e_{4},\widetilde e_{3}),
\]
and hence \eqref{eq:reduced} becomes
\begin{equation}
\label{eq:reduced-X}
8\pi E_{WY}(\Sigma,\tau)
=
\int_{\widehat{\Sigma}}
\left(
k_{0}
-
\widetilde k
+
\langle X,\widetilde e_{3}\rangle
\right)
d\mu_{\widehat{\sigma}}.
\end{equation}
Defining the generalized mean curvature of the Jang boundary by
\begin{equation}
\label{eq:Jang-generalized-mean}
\mathcal H
:=
\widetilde k-\langle X,\widetilde e_{3}\rangle,
\end{equation}
we obtain the particularly useful form
\begin{equation}
\label{eq:WY-reduced-final-form}
8\pi E_{WY}(\Sigma,\tau)
=
\int_{\widehat{\Sigma}}
(k_{0}-\mathcal H)\,
d\mu_{\widehat{\sigma}}.
\end{equation}

\noindent 
The reduction \eqref{eq:WY-reduced-final-form} converts the
Lorentzian Hamiltonian expression defining the Wang--Yau energy into a
comparison between the Euclidean reference mean curvature $k_{0}$ and
the generalized boundary mean curvature associated with the Jang
fill-in.  This is the form that will be used in the subsequent
positivity argument.  More precisely, our objective is to prove
\[
E_{WY}(\Sigma,\tau)\geq0
\]
for every admissible time function $\tau$ belonging to the class under
consideration.  Whenever this estimate holds uniformly over the entire
admissible class $\mathcal A(\Sigma)$, it follows immediately that
\[
M_{\rm WY}(\Sigma)
=
\inf_{\tau\in\mathcal A(\Sigma)}
E_{WY}(\Sigma,\tau)
\geq0.
\]
\begin{remark}
If the Gauss curvature of $(\Sigma, \sigma)$ is positive then an embedding into $\mathbb{R}^{3}$ with $\tau=0$ is admissible. In such case, $M^{WY}$ reduces to $M^{LY}$ and $(\Omega,\Sigma)$ coincide with $(\widehat{\Omega},\widehat{\Sigma})$. 
\end{remark} 

\subsection{Main Open Problem}
\noindent A central problem in the theory of quasi-local mass is to establish
positivity and characterize the corresponding rigidity regime by
arguments that depend only on the geometry of the bounded region and
its boundary.  For the Brown--York mass, Shi and Tam
\cite{shi2002positive} proved non-negativity under the assumptions that
$(\Omega,g)$ has non-negative scalar curvature and that
\[
\Sigma:=\partial\Omega
\]
has strictly positive Gaussian curvature and positive mean curvature.
Their argument proceeds through the construction of a suitable
asymptotically flat extension of $(\Omega,g)$.  More precisely, using
the Euclidean isometric embedding
\[
i_{0}:(\Sigma,\sigma)\hookrightarrow\mathbb R^{3},
\]
they attach to $\Sigma$ an exterior region endowed with a
quasi-spherical metric of the type introduced in Bartnik's extension
framework \cite{bartnik1993quasi}.  The metric in the exterior is
determined by a nonlinear parabolic equation chosen so that the scalar
curvature of the extension is non-negative and the induced metric and
mean curvature agree across the gluing hypersurface.  This produces a
complete asymptotically flat manifold without a corner at $\Sigma$,
or, equivalently, with matching first-order boundary geometry across
the interface.

\noindent The positivity of the Brown--York mass is then obtained by combining
the monotonicity formula associated with the quasi-spherical foliation
of the exterior region with the asymptotic identification of the
corresponding boundary functional with the ADM mass.  The positive mass
theorem of Schoen--Yau
\cite{schoen1979proof,schoen1981proof} implies that the limiting ADM
mass is non-negative, and the monotonicity along the exterior
foliation propagates this sign back to the original boundary
$\Sigma$.  Variants of this extension strategy were subsequently used
by Liu and Yau
\cite{liu2003positivity,liu2006positivity} and by Wang and Yau
\cite{yau} in proving positivity results for their respective
quasi-local mass constructions, after first reducing the Lorentzian
boundary data to an appropriate Riemannian problem.

\noindent This raises a conceptually different question: whether positivity can
be proved by an argument that is genuinely quasi-local, in the sense
that it uses only the geometric data on the compact fill-in
$\Omega$ and its boundary $\Sigma$, without adjoining an artificial
asymptotically flat end and without invoking the global positive mass
theorem.  In this direction, Schoen \cite{schoen1} formulated the
problem of finding a quasi-local proof of non-negativity for the
Brown--York mass and related quasi-local mass functionals.  The purpose
of the present work is to address this problem perturbatively: for a
class of data sufficiently close to the Minkowski configuration, we
derive positivity directly from the geometry of the compact Jang
fill-in and its boundary, with no appeal to an asymptotically flat
extension or to the positive mass theorem.

\noindent
A completely quasi-local proof of the non-negativity of the
Wang--Yau, Brown--York, or Liu--Yau mass in full generality appears to
be substantially more delicate than the corresponding argument obtained
by asymptotically flat extension, and the problem remains open in this
generality.  Several spinorial approaches have been proposed in this
direction.  In particular, \cite{montiel2022compact} proposed a spinorial
argument for the Brown--York mass.  The difficulty in such an argument
is not the interior Bochner identity itself, but rather the boundary
value problem required to convert the resulting spinorial inequality
into the \emph{unweighted} Brown--York functional.  A related
construction was developed in \cite{pyau}, where a spinor-weighted
quasi-local functional was introduced.  The latter enjoys a purely
quasi-local positivity property, but agrees with the Wang--Yau energy
only under an additional boundary normalization condition on the
harmonic spinor.  As we explain below, this normalization is not
provided by the elliptic boundary problem and constitutes the essential
obstruction to obtaining the full quasi-local mass by this method.

\noindent We describe the issue at the level of the underlying elliptic problem.
Let $(\Omega,g)$ be a compact oriented Riemannian spin $3$--manifold
with smooth boundary
\[
\Sigma=\partial\Omega,
\]
and let $\mathbb S\Omega$ denote its spinor bundle.  We write
$\nabla$ for the spin connection,
\[
\mathscr D
=
\sum_{j=1}^{3}c(e_j)\nabla_{e_j}
\]
for the Dirac operator on $\Omega$, and $D_{\Sigma}$ for the intrinsic
Dirac operator induced on $\Sigma$.  With a fixed convention for the
outward unit normal $\nu$, the Schr\"odinger--Lichnerowicz formula is
\begin{equation}
\label{eq:SL-spin-obstruction}
\mathscr D^{\,2}
=
\nabla^{*}\nabla+\frac14R_g.
\end{equation}
Combining \eqref{eq:SL-spin-obstruction} with the spinorial Gauss
formula and integrating by parts gives the spinorial Reilly identity
\begin{equation}
\label{eq:spin-Reilly-obstruction}
\begin{split}
\int_{\Sigma}
\left(
\left\langle
D_{\Sigma}\psi,\psi
\right\rangle
-\frac{H}{2}|\psi|^{2}
\right)d\mu_{\sigma}
=
\int_{\Omega}
\left(
|\nabla\psi|^{2}
+\frac14R_g|\psi|^{2}
-
|\mathscr D\psi|^{2}
\right)d\mu_g .
\end{split}
\end{equation}
Here $H$ denotes the mean curvature of $\Sigma$ with respect to the
normal convention used in \eqref{eq:spin-Reilly-obstruction}.

The term
\begin{equation}
\label{eq:bad-Dirac-term}
-\int_{\Omega}|\mathscr D\psi|^{2}\,d\mu_g
\end{equation}
has the wrong sign for a direct positivity argument.  The natural
strategy is therefore to require
\begin{equation}
\label{eq:harmonic-spinor}
\mathscr D\psi=0
\qquad\text{in }\Omega.
\end{equation}
For example, when $R_g\geq0$, equations
\eqref{eq:spin-Reilly-obstruction}--\eqref{eq:harmonic-spinor} imply
\begin{equation}
\label{eq:spin-Reilly-positive}
\int_{\Sigma}
\left(
\left\langle
D_{\Sigma}\psi,\psi
\right\rangle
-\frac{H}{2}|\psi|^{2}
\right)d\mu_{\sigma}
\geq0.
\end{equation}
The analogous inequality on the Jang fill-in contains the generalized
mean curvature occurring in the Wang--Yau reduction.  Indeed, if a
vector field $X$ satisfies
\begin{equation}
\label{eq:Jang-spin-curvature}
R_g
\geq
2|X|^{2}
-
2\operatorname{div}_gX,
\end{equation}
then \eqref{eq:spin-Reilly-obstruction} and
\eqref{eq:harmonic-spinor} yield
\begin{equation}
\label{eq:spin-Reilly-X}
\int_{\Sigma}
\left\langle
D_{\Sigma}\psi,\psi
\right\rangle
d\mu_{\sigma}
\geq
\frac12
\int_{\Sigma}
\left(
H-\langle X,\nu\rangle
\right)|\psi|^{2}
d\mu_{\sigma}.
\end{equation}
To see the sign in \eqref{eq:spin-Reilly-X}, one uses
\eqref{eq:Jang-spin-curvature} to estimate the right hand side of
\eqref{eq:spin-Reilly-obstruction} by
\[
\begin{split}
&\int_{\Omega}
\left(
|\nabla\psi|^{2}
+
\frac12|X|^{2}|\psi|^{2}
-
\frac12(\operatorname{div}X)|\psi|^{2}
\right)d\mu_g .
\end{split}
\]
Integration by parts gives
\begin{equation}
\label{eq:spin-X-integration}
\begin{split}
-\frac12
\int_{\Omega}
(\operatorname{div}X)|\psi|^{2}\,d\mu_g
={}&
-\frac12
\int_{\Sigma}
\langle X,\nu\rangle|\psi|^{2}\,d\mu_{\sigma}
\\
&+
\frac12
\int_{\Omega}
X(|\psi|^{2})\,d\mu_g,
\end{split}
\end{equation}
while metric compatibility of the spin connection gives
\[
\frac12X(|\psi|^{2})
=
\operatorname{Re}
\langle\nabla_X\psi,\psi\rangle.
\]
Consequently, the remaining interior integrand is
\[
|\nabla\psi|^{2}
+
\operatorname{Re}
\langle\nabla_X\psi,\psi\rangle
+
\frac12|X|^{2}|\psi|^{2}.
\]
Pointwise,
\begin{equation}
\label{eq:spin-X-square}
\begin{split}
&
|\nabla\psi|^{2}
+
\operatorname{Re}
\langle\nabla_X\psi,\psi\rangle
+
\frac12|X|^{2}|\psi|^{2}
\\
&\qquad =
\left|
\nabla\psi+\frac12X^{\flat}\otimes\psi
\right|^{2}
+
\frac14|X|^{2}|\psi|^{2}
\geq0,
\end{split}
\end{equation}
which proves \eqref{eq:spin-Reilly-X}.

\noindent At this stage the interior geometric inequality has been completely
exploited; the remaining difficulty is entirely a boundary issue.
Suppose, for instance, that $(\Sigma,\sigma)$ admits an isometric
embedding into $\mathbb R^{3}$ with Euclidean mean curvature $k_0$.
A nonzero parallel spinor $\Phi$ on the Euclidean reference domain
restricts to a spinor
\[
\xi:=\Phi|_{\Sigma}
\]
satisfying the extrinsic Dirac identity
\begin{equation}
\label{eq:reference-spinor-Dirac}
D_{\Sigma}\xi
=
\frac{k_0}{2}\xi,
\qquad
|\xi|\equiv1,
\end{equation}
after normalization of $\Phi$.  If one were able to solve
\eqref{eq:harmonic-spinor} subject to the full boundary condition
\begin{equation}
\label{eq:full-spin-Dirichlet}
\psi|_{\Sigma}=\xi,
\end{equation}
then \eqref{eq:spin-Reilly-positive} and
\eqref{eq:reference-spinor-Dirac} would immediately give
\[
0
\leq
\frac12
\int_{\Sigma}
(k_0-H)|\xi|^{2}\,d\mu_{\sigma}
=
4\pi m_{\rm BY}(\Sigma),
\]
which would constitute the desired compact proof.

\noindent The difficulty is that \eqref{eq:full-spin-Dirichlet} is not an
elliptic boundary condition for the first-order operator $\mathscr D$.
For a Dirac-type operator on a compact manifold with boundary, an
elliptic boundary condition prescribes only one half of the boundary
degrees of freedom.  Equivalently, the trace of a harmonic spinor is
constrained to belong to the Cauchy data space
\[
\mathcal C_{\mathscr D}
\subset
H^{1/2}(\Sigma;\mathbb S\Sigma),
\]
which is the range of the Calder\'on projector associated with
$\mathscr D$.  Thus a prescribed boundary spinor $\xi$ can occur as
the complete trace of a solution of
\[
\mathscr D\psi=0
\]
only if
\[
\xi\in\mathcal C_{\mathscr D},
\]
a compatibility condition which is not satisfied by generic boundary
data.

\noindent For example, the local MIT bag condition is expressed through the
orthogonal projections
\begin{equation}
\label{eq:MIT-projectors-obstruction}
\Pi_{\pm}
:=
\frac12
\left(
{\rm Id}\pm\sqrt{-1}\,c(\nu)
\right),
\end{equation}
up to the choice of normal and Clifford conventions.  The elliptic
boundary problem prescribes only
\begin{equation}
\label{eq:MIT-half-boundary}
\Pi_{+}\psi
=
\Pi_{+}\xi
\qquad\text{on }\Sigma,
\end{equation}
while the complementary component $\Pi_{-}\psi$ is determined by the
interior equation.  In particular,
\begin{equation}
\label{eq:boundary-norm-decomposition}
|\psi|^{2}
=
|\Pi_{+}\xi|^{2}
+
|\Pi_{-}\psi|^{2}
\qquad\text{on }\Sigma,
\end{equation}
and there is no mechanism in
\eqref{eq:harmonic-spinor}--\eqref{eq:MIT-half-boundary} forcing
\begin{equation}
\label{eq:constant-boundary-norm}
|\psi|\equiv1
\qquad\text{on }\Sigma.
\end{equation}
Imposing \eqref{eq:constant-boundary-norm} in addition to the elliptic
half-boundary condition amounts to introducing a further nonlinear
pointwise constraint on the Cauchy data.  Such a constraint is not a
consequence of the Dirac boundary-value problem and, on a generic
geometry, there is no reason for it to be solvable.

\noindent This distinction is decisive for the quasi-local mass.  In the
Wang--Yau setting, after the Jang reduction one has
\begin{equation}
\label{eq:WY-unweighted-spin-comparison}
8\pi E_{WY}(\Sigma,\tau)
=
\int_{\widehat\Sigma}
\left(
k_0-\mathcal H
\right)d\mu_{\widehat\sigma},
\qquad
\mathcal H
=
\widetilde k-\langle X,\nu\rangle.
\end{equation}
A spinorial argument of the preceding type naturally controls instead
the weighted quantity
\begin{equation}
\label{eq:weighted-spin-WY}
8\pi M_{\tau}^{\psi}
:=
\int_{\widehat\Sigma}
\left(
k_0-\mathcal H
\right)
|\psi|^{2}
d\mu_{\widehat\sigma}.
\end{equation}
Under the appropriate curvature inequality and elliptic boundary
condition one may obtain
\begin{equation}
\label{eq:weighted-spin-positive}
M_{\tau}^{\psi}\geq0.
\end{equation}
However, \eqref{eq:weighted-spin-positive} does \emph{not} imply
\[
E_{WY}(\Sigma,\tau)\geq0,
\]
because the boundary density
\[
k_0-\mathcal H
\]
has no a priori pointwise sign.  Positivity of one weighted integral
therefore contains strictly less information than positivity of the
corresponding unweighted integral.  The two expressions agree
directly only under the normalization
\begin{equation}
\label{eq:spin-normalization-needed}
|\psi|^{2}\equiv1
\qquad\text{on }\widehat\Sigma,
\end{equation}
which is precisely the additional condition that is not furnished by
the elliptic Dirac problem.  This is the essential limitation of the
spinorial construction in \cite{pyau} when one attempts to recover the
Wang--Yau mass itself rather than a spinor-weighted quasi-local
functional.

\noindent It is useful to distinguish this boundary normalization problem from
the rigidity of the equality case.  A parallel spinor does satisfy
\[
\nabla\psi=0
\qquad\Longrightarrow\qquad
|\psi|\equiv {\rm constant},
\]
but the existence of a nonzero parallel spinor is already a highly
rigid geometric condition.  Indeed it leads to the Ricci flatness
\begin{equation}
\label{eq:parallel-implies-Ricci-flat}
\operatorname{Ric}_g=0.
\end{equation}
In dimension three the Riemann curvature tensor is determined
algebraically by the Ricci tensor and
therefore \eqref{eq:parallel-implies-Ricci-flat} implies
\[
\operatorname{Rm}_g=0.
\]
Thus the parallel-spinor mechanism naturally occurs in the rigidity
regime itself and cannot be used as a generic device for enforcing the
boundary normalization \eqref{eq:spin-normalization-needed}.

\noindent Accordingly, the obstruction in the spinorial approach is not the
absence of a Bochner inequality or of an elliptic boundary condition
for the Dirac operator.  Both are available.  The obstruction is the
mismatch between the admissible Cauchy data of a first-order elliptic
system and the boundary normalization required to remove the spinorial
weight from the quasi-local Hamiltonian.  This is why the spinorial
argument naturally produces the positive quantity
\eqref{eq:weighted-spin-WY}, while recovering the Wang--Yau,
Brown--York, or Liu--Yau mass itself requires an additional condition
which is not available on a generic compact fill-in. First, we define the following space of the transverse-traceless tensors

\subsection{Main Result}
\noindent In this section we provide the main theorem that we prove in this article. The main result of this article is split into three theorems. First, we define the following space 
\[
{\mathcal T}^{k,\alpha}
:=
\left\{
h\in C^{k,\alpha}
(\Omega_0;S^2T^*\Omega_0): ||h||_{C^{k,\alpha}}=1,
\operatorname{div}_{\delta}h=0,\;
\operatorname{tr}_{\delta}h=0,\;
h^T|_{\Sigma_0}=0
\right\}.
\]

\begin{theorem}
\label{1}
Let $(M,g,k)$ be a smooth physical initial data set and suppose that the
dominant energy condition holds. Let $\tau$ be an admissible minimizing
Wang--Yau time function, and let
$(\widehat{\Omega},\widehat{g})$ be the corresponding Jang fill-in.
Let $U$ be the unique positive solution of
\begin{equation}
\label{eq:U-main}
-8\Delta_{\widehat{g}}U+R_{\widehat{g}}U=0
\quad\text{in }\widehat{\Omega},
\qquad
U=1
\quad\text{on }\partial\widehat{\Omega},
\end{equation}
and define
$g_{\mathrm{sf}}:=U^{4}\widehat{g}$.
Then
$R_{g_{\mathrm{sf}}}=0$
and
\begin{equation}
\label{eq:WY-BY-main}
E_{\mathrm{WY}}(\Sigma,\tau)
\geq
m_{\mathrm{BY}}
\left(
\partial\widehat{\Omega};
\widehat{\Omega},g_{\mathrm{sf}}
\right).
\end{equation}
\end{theorem}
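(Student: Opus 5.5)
Start from the reduction \eqref{eq:WY-reduced-final-form}, $8\pi E_{WY}(\Sigma,\tau)=\int_{\widehat\Sigma}(k_0-\mathcal H)\,d\mu_{\widehat\sigma}$ with $\mathcal H=\widetilde k-\langle X,\widetilde e_3\rangle$. The boundary metric of $\widehat\Omega$ is $\widehat\sigma$ by \eqref{eq:Jang-boundary-metric}, and $U=1$ on $\partial\widehat\Omega$. Hence $g_{\rm sf}$ induces the same metric $\widehat\sigma$ on $\widehat\Sigma$. Its Euclidean reference mean curvature is therefore the same $k_0$ as in \eqref{eq:red1}. Consequently
\[
8\pi m_{\rm BY}(\partial\widehat\Omega;\widehat\Omega,g_{\rm sf})=\int_{\widehat\Sigma}(k_0-H_{\rm sf})\,d\mu_{\widehat\sigma},
\]
and the claim \eqref{eq:WY-BY-main} is equivalent to
\[
\int_{\widehat\Sigma}\widetilde k\,d\mu_{\widehat\sigma}-\int_{\widehat\Sigma}\langle X,\widetilde e_3\rangle\,d\mu_{\widehat\sigma}\le\int_{\widehat\Sigma}H_{\rm sf}\,d\mu_{\widehat\sigma}.
\]
Under the conformal change $g_{\rm sf}=U^4\widehat g$ with $U|_{\partial}=1$, the boundary mean curvature transforms as $H_{\rm sf}=\widetilde k+4\partial_\nu U$. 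The target inequality thus becomes
\[
\int_{\widehat\Sigma}\bigl(4\partial_\nu U+\langle X,\nu\rangle\bigr)\,d\mu_{\widehat\sigma}\ge0 .
\]
That $R_{g_{\rm sf}}=0$ is immediate from the conformal Laplacian formula $R_{U^4\widehat g}=U^{-5}(-8\Delta U+R U)$ and \eqref{eq:U-main}.

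The interior ingredient is the Schoen--Yau identity for the Jang graph. Under the dominant energy condition it gives
\[
R_{\widehat g}\ge 2|X|^2-2\operatorname{div}X+2(\mu-|J|)\ge 2|X|^2-2\operatorname{div}X,
\]
with $X$ as in \eqref{eq:Jang-X-definition}; this is the inequality \eqref{eq:Jang-spin-curvature} already used earlier. Existence and positivity of $U$ follow from coercivity of $-8\Delta+R_{\widehat g}$ with Dirichlet data. For coercivity I would test with $\phi$ vanishing on the boundary, integrate $-2\phi^2\operatorname{div}X$ by parts, and absorb the cross term $4\phi\langle X,\nabla\phi\rangle$ by Cauchy--Schwarz against $2|X|^2\phi^2$. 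This gives
\[
\int\bigl(8|\nabla\phi|^2+R\phi^2\bigr)\ge\int 6|\nabla\phi|^2 .
\]
Writing $U=1+w$ with $w|_\partial=0$ and applying the maximum principle to the resulting linear problem then yields $U>0$.

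The main step is the boundary inequality. Integrate the equation against $U$:
\[
\int_{\widehat\Sigma}8U\partial_\nu U=\int_{\widehat\Omega}\bigl(8|\nabla U|^2+RU^2\bigr).
\]
Since $U=1$ on the boundary, the left side is $8\int\partial_\nu U$. Insert the Jang inequality and integrate $-2U^2\operatorname{div}X$ by parts, keeping the boundary term this time. This gives
\[
8\int_\partial\partial_\nu U\ge\int_\Omega\bigl(8|\nabla U|^2+2|X|^2U^2+4U\langle X,\nabla U\rangle\bigr)-2\int_\partial\langle X,\nu\rangle .
\]
The interior integrand is $\ge 6|\nabla U|^2\ge0$ after completing the square. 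Dividing by 2 yields
\[
\int_\partial\bigl(4\partial_\nu U+\langle X,\nu\rangle\bigr)\ge0,
\]
which is exactly the required inequality. The delicate points I expect are sign conventions for $\nu$, $X$, and the mean curvatures, together with regularity of the Jang solution up to the boundary needed to justify the integrations by parts. Equality tracking gives the "nonnegative bulk terms" $\int\bigl(6|\nabla U|^2+\dots+2(\mu-|J|)U^2\bigr)$ mentioned in the abstract.
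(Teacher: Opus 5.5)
Your proposal is correct and follows essentially the same route as the paper's proof of Proposition~\ref{prop:WY-scalar-flat-reduction}. Both arguments use the Jang inequality $R_{\widehat g}\geq 2|X|^2-2\operatorname{div}X$, coercivity of the Dirichlet conformal Laplacian, testing the equation against $U$ while keeping the boundary term from $\operatorname{div}(U^2X)$, and the identity $H_{\rm sf}-\mathcal H=4\partial_\nu U+\langle X,\nu\rangle$. The one loose point is positivity of $U$: $R_{\widehat g}$ may change sign, so the classical maximum principle does not apply directly. Instead, as the paper does, test the equation against $U_-=\max\{-U,0\}\in H^1_0$ and use your coercivity estimate to get $U_-\equiv0$. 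Then the strong maximum principle gives $U>0$.
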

\begin{theorem}
\label{2}
Let $(\Omega_{0},\delta)\subset\mathbb{R}^{3}$ be the strictly convex
Euclidean domain determined by the projected reference embedding, and
fix $k\geq 2$ and $\alpha\in(0,1)$. Then there exists a nontrivial
class of physical initial data $(M,g,k)$ satisfying the
Einstein constraint equations and the dominant energy condition for
which the corresponding scalar-flat Jang reduction $g_{sf}$ admits the following
description.

\noindent There exist
a boundary-preserving diffeomorphism
$\Phi:\Omega_0\longrightarrow\widehat\Omega$,
a nonzero TT tensor $h$, and a sufficiently small parameter $\lambda$
such that
\[
\Phi^*g_{\rm sf}
=
G(\lambda h)
=
v_\lambda^4(\delta+\lambda h).
\]
where $v_{\lambda}$ is the unique positive solution of
\begin{equation}
\label{eq:v-main}
-8\Delta_{\delta+\lambda h}v_{\lambda}
+
R_{\delta+\lambda h}v_{\lambda}
=
0
\quad\text{in }\Omega_{0},
\qquad
v_{\lambda}=1
\quad\text{on }\partial\Omega_{0}.
\end{equation}
Moreover,
$\|v_{\lambda}-1\|_{C^{k,\alpha}(\Omega_{0})}
\leq
C_{h}\lambda^{2}$,
and consequently
\begin{equation}
\label{eq:gsf-expansion-main}
\Phi^{*}g_{\mathrm{sf}}
=
\delta+\lambda h
+
O_{C^{k,\alpha}}(\lambda^{2}).
\end{equation}
\end{theorem}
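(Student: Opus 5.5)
Theorem~\ref{2} is an existence statement, so the plan is to construct the data directly, working backward from the desired Jang fill-in. I will choose the physical data so that the Jang solution is explicit, rather than solve Jang's equation for given data.

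\emph{Step 1 (the perturbed metric and its conformal factor).} Fix a nonzero $h\in\mathcal T^{k,\alpha}$. Such tensors exist: take a TT tensor on $\mathbb R^3$ with compact support in the interior of $\Omega_0$, for example $h=\operatorname{curl}\operatorname{curl}$ of a suitable compactly supported traceless tensor, or a York-type construction. Then $h^T|_{\Sigma_0}=0$ holds trivially. Set $g_\lambda=\delta+\lambda h$. The linearized scalar curvature is $DR_\delta(h)=\partial_i\partial_jh_{ij}-\Delta\operatorname{tr}h=0$ because $h$ is TT, so $R_{g_\lambda}=O_{C^{k-2,\alpha}}(\lambda^2)$. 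For $\lambda$ small, the operator $-8\Delta_{g_\lambda}+R_{g_\lambda}$ with Dirichlet condition is a small perturbation of $-8\Delta_\delta$, hence invertible. Writing $v_\lambda=1+w$, we have
\[
-8\Delta_{g_\lambda}w+R_{g_\lambda}w=-R_{g_\lambda},\qquad w|_{\partial\Omega_0}=0 .
\]
Schauder estimates then give $\|w\|_{C^{k,\alpha}}\le C\|R_{g_\lambda}\|_{C^{k-2,\alpha}}\le C_h\lambda^2$. Positivity of $v_\lambda$ follows from smallness, and uniqueness from invertibility. This yields $G(\lambda h)=v_\lambda^4 g_\lambda=\delta+\lambda h+O(\lambda^2)$ and $R_{G(\lambda h)}=0$.

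\emph{Step 2 (realizing it as a Jang reduction).} Take time-symmetric physical data $(M,g,k)=(\Omega_0,g_\lambda,0)$ extended smoothly. One cannot simply take $g=G(\lambda h)$: the reduction applies $U$, which is $1$ when $R=0$, and only then would it return $G$. So take $g=g_\lambda$ itself. Then $\mu=\tfrac12R_{g_\lambda}$, which may be negative at order $\lambda^2$. To enforce the dominant energy condition, modify the data instead. One option is to take $g=U_0^{4}g_\lambda$ with $U_0$ chosen so that $R_g\ge0$ and $k=0$. An easier option is to let $k=0$ and $g=G(\lambda h)$ itself, which has $R=0$, so $\mu=0=|J|$ and the DEC holds.

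With $k=0$ and boundary data $\tau=0$, the Jang equation \eqref{eq:Jang2} reduces to the minimal graph equation with zero Dirichlet data, whose unique solution is $f\equiv0$. So $\widehat\Omega=\Omega_0$ and $\widehat g=g$. Since $R_{\widehat g}=0$, the solution of \eqref{eq:U-main} is $U\equiv1$, and therefore $g_{\rm sf}=G(\lambda h)$ with $\Phi=\mathrm{id}$. The constraints hold by defining $(\mu,J)$ through \eqref{eq:hamiltonian-constraint}--\eqref{eq:momentum-constraint}.

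Two admissibility conditions must be checked. First, $\tau=0$ must be admissible and minimizing: the boundary metric is Euclidean near $\Sigma_0$ because $h$ vanishes near the boundary, so the Gauss curvature is positive. Second, the data must be nontrivial: $G(\lambda h)$ is non-flat because its linear part $\lambda h$ is not a Lie derivative, as a nonzero compactly supported TT tensor cannot be $\mathcal L_X\delta$ with $X$ vanishing at the boundary. For a richer class, one can take $k=\nabla^2 f/\sqrt{1+|\nabla f|^2}$ for an arbitrary $f$ with $f|_\Sigma=\tau$, and then choose $g$ so that the graph metric equals $G(\lambda h)$. In that case $\Phi$ is the graph map.

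\emph{Main obstacle.} The delicate point is Step 2's requirement that $\tau$ be an admissible \emph{minimizing} Wang--Yau time function. I will handle it by keeping the boundary geometry exactly Euclidean near $\Sigma_0$, so that the Wang--Yau data coincide with the Minkowski data, for which $\tau=0$ is known to be the minimizer. The estimate in Step 1 is routine elliptic theory.
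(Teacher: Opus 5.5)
Your Step 1 is correct but takes a slightly different route from the paper. The paper obtains $v_\lambda$ from the implicit function theorem for $\mathcal F(\lambda,w)$ and then shows $\dot v_0=0$ from $DR_\delta(h)=0$. You instead solve the linear Dirichlet problem for $w=v_\lambda-1$ with source $-R_{\delta+\lambda h}=O_{C^{k-2,\alpha}}(\lambda^2)$ and read the quadratic bound off uniform Schauder estimates. This is more direct, and since the problem is linear it gives uniqueness among all solutions, not only those near $1$. Your sample TT seed is off: the double curl of a traceless $q$ is divergence-free but in general not trace-free. Use the linearized Cotton--York tensor as in Lemma~\ref{lem:compact-TT}.

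Your final choice in Step 2 meets the literal statement. Contrary to your opening sentence there, taking $g=G(\lambda h)$ is exactly right. For time-symmetric vacuum data $(\Omega_0,G(\lambda h),0)$ with $\tau=0$, Jang's equation is the minimal-graph equation with zero data. Hence $f\equiv0$, $\widehat g=G(\lambda h)$, $U\equiv1$, $\Phi=\mathrm{Id}$ and $\mu=|J|=0$. You still need the third admissibility condition, which here is $H_\lambda>0$ and follows from $H_0>0$ by continuity.

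This realization is much more degenerate than the paper's. The paper first raises the scalar curvature: $\widehat g_{\lambda,a}=(1+a\phi_\lambda)^4G(\lambda h)$ with $-\Delta_{G(\lambda h)}\phi_\lambda=1$ and $\phi_\lambda|_{\Sigma_0}=0$. Then $R_{\widehat g_{\lambda,a}}=8a(1+a\phi_\lambda)^{-5}>0$, while $U=(1+a\phi_\lambda)^{-1}$ returns exactly $G(\lambda h)$. Only then does it set $g=\widehat g_{\lambda,a}-df\otimes df$ and $K=A_f+S$ with $\operatorname{tr}_{\widehat g_{\lambda,a}}S=0$. The strict energy margin at $(f,S)=(0,0)$ survives small perturbations. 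This gives an open class of non-time-symmetric data with $\tau\neq0$ and a nontrivial Jang graph. Your data are the endpoint $a=0$, $f=0$, $S=0$ of that family. There the DEC holds only with equality, nothing can be perturbed, and the Wang--Yau energy collapses to the Brown--York mass.

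Two of your side claims are wrong.

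\emph{The ``richer class'' violates the DEC.} Take $K=\nabla^2f/\sqrt{1+|\nabla f|^2}$ with graph metric exactly $G(\lambda h)$. Then $\widehat A=K$ on the graph, so $X=0$. The Schoen--Yau identity $R_{\widehat g}=2(\mu-J(\omega))+|\widehat A-K|^2_{\widehat g}+2|X|^2-2\operatorname{div}X$, with $|\omega|_g<1$, underlies the paper's Jang inequality. Here it reduces to $0=2(\mu-J(\omega))$. This is compatible with $\mu\geq|J|$ only if $J\equiv0$. Concretely, these data are the slices $t=f$ of the ultrastatic spacetime $-dt^2+G(\lambda h)$, whose Einstein tensor is $0\oplus\operatorname{Ric}$. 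Hence $\mu=\operatorname{Ric}(\widehat\nabla f,\widehat\nabla f)/(1-|df|^2)$, with norms taken in $G(\lambda h)$. Since $\operatorname{Ric}$ is trace-free and nonzero, $\mu<0$ wherever $\widehat\nabla f$ points along a negative Ricci direction. The conformal pre-factor $1+a\phi_\lambda$ is precisely the idea that repairs this.

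\emph{Your minimality argument is false.} With your compactly supported $h$, $G(\lambda h)=v_\lambda^4\delta$ near $\Sigma_0$, not $\delta$. So the Wang--Yau data are $(\gamma_0,H_\lambda,0)$ with $H_\lambda=H_0+4\partial_{\nu_0}v_\lambda\neq H_0$. Indeed $\int_{\Sigma_0}(H_0-H_\lambda)\,dA_{\gamma_0}=8\pi m_{\rm BY}(g_\lambda)>0$ by Corollary~\ref{cor:BY-quadratic-expansion}, which would vanish if the data were Minkowskian. Time symmetry ($\alpha_{\mathbf H}=0$) only makes $\tau=0$ a critical point. This does not break your proof of the statement, which, as in the paper, needs only admissibility of $\tau$ and not minimality. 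The claim should nonetheless be dropped.
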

\begin{theorem}
Let $(M,g,k)$ satisfy the dominant energy condition, and let $\tau_*$ be
an admissible minimizing Wang--Yau time function such that the pair
$\bigl((M,g,k),\tau_*\bigr)$
belongs to the TT-generated Jang-reduced class of Definition~6. Then,
for every fixed nonzero
$h\in\mathcal{T}^{k,\alpha}$,
there exists $\varepsilon(h)>0$ such that, whenever
$0<|\lambda|<\varepsilon(h)$,
one has
$m_{\rm BY}
\bigl(
\partial\widehat{\Omega}_*;
\widehat{\Omega}_*,
g_{{\rm sf},*}
\bigr)>0$.
Consequently,
$E_{\rm WY}(\Sigma,\tau_*)>0,
\qquad
M_{\rm WY}(\Sigma)
=
E_{\rm WY}(\Sigma,\tau_*)>0$.
\end{theorem}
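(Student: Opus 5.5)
The plan is to combine Theorem~\ref{1} with a second-order expansion of the Brown--York functional about the Euclidean fill-in. By Theorem~\ref{1}, $E_{\rm WY}(\Sigma,\tau_*)\geq m_{\rm BY}(\partial\widehat\Omega_*;\widehat\Omega_*,g_{{\rm sf},*})$. Membership in the TT-generated class (Definition~6, realized as in Theorem~\ref{2}) gives a boundary-preserving diffeomorphism $\Phi$ with $\Phi^*g_{{\rm sf},*}=G(\lambda h)=v_\lambda^4(\delta+\lambda h)$. Brown--York mass is diffeomorphism invariant, so it suffices to study
\[
F(\lambda):=8\pi\, m_{\rm BY}\bigl(\partial\Omega_0;\Omega_0,G(\lambda h)\bigr)=\int_{\partial\Omega_0}\bigl(k_0-k_{G(\lambda h)}\bigr)\,d\mu .
\]
Because $h^T|_{\Sigma_0}=0$ and $v_\lambda=1$ on $\partial\Omega_0$, the boundary metric is unchanged. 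Hence $k_0$ and the area form are those of $\partial\Omega_0\subset\mathbb R^3$, and only the mean curvature $k_{G(\lambda h)}$ varies. The goal is to show $F(0)=0$, $F'(0)=0$, and $F''(0)>0$, and then to use Taylor's theorem to obtain $\varepsilon(h)$.

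For $F(0)=0$: at $\lambda=0$ we have $v_0=1$ and $G(0)=\delta$, so $k=k_0$.

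For $F'(0)$: the standard first-variation formula for boundary mean curvature, with fixed boundary metric, gives
\[
\dot k=\tfrac12\,\nu(\operatorname{tr}h)-(\operatorname{div}h)(\nu)+\tfrac12\operatorname{div}_\Sigma\bigl(h(\nu,\cdot)^T\bigr)-\tfrac12 h(\nu,\nu)k_0 ,
\]
up to terms in $h^T$, which vanish. Here $\operatorname{tr}h=0$ and $\operatorname{div}h=0$, and the tangential divergence integrates to zero. The conformal factor contributes $4\partial_\nu\dot v$, but $\dot v=0$ because $v_\lambda-1=O(\lambda^2)$ by Theorem~\ref{2}. This leaves $F'(0)=\tfrac12\int_\Sigma h(\nu,\nu)k_0$. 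I would then try to show this vanishes. One route is to observe that $\int_\Sigma h(\nu,\nu)k_0$ is the first variation of the total scalar curvature plus boundary term, $\int_\Omega R+2\int_\Sigma k$. For $\delta$ this variation is $\int_\Omega \operatorname{div}\operatorname{div}h-\Delta\operatorname{tr}h$ plus boundary terms, which vanish for TT $h$.

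Actually a cleaner bookkeeping is the Regge--Teitelboim/Hilbert identity. Set
\[
\mathcal F(g)=\int_\Omega R_g\,d\mu_g+2\int_\Sigma k_g\,d\mu_\sigma .
\]
For a fixed boundary metric, $D\mathcal F|_\delta[h]=0$, since Euclidean space is Ricci-flat; this is the Dirichlet Einstein--Hilbert variational principle. Along the scalar-flat path $R_{G(\lambda h)}=0$, so $F(\lambda)=\int_\Sigma k_0-\tfrac12\mathcal F(G(\lambda h))$ up to the volume-term correction. That correction vanishes because $R=0$. This yields $F'(0)=0$ at once and reduces everything to $F''(0)=-\tfrac12 D^2\mathcal F|_\delta[\dot G,\dot G]$ with $\dot G=h$, since $\ddot G$ enters only through $D\mathcal F=0$.

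For $F''(0)$: the second variation of the Gibbons--Hawking-augmented Einstein--Hilbert action at flat space in TT directions with $h^T|_\Sigma=0$ is $-\tfrac14\int_\Omega|\nabla h|^2$ plus boundary terms. I would verify this by direct computation of $\ddot R$ and $\ddot k$, integrating by parts with $\operatorname{div}h=\operatorname{tr}h=0$. The expected result is
\[
F''(0)=\tfrac14\int_{\Omega_0}|\nabla h|^2\,dx+\mathcal B(h),
\]
where $\mathcal B(h)$ collects boundary terms involving $h(\nu,\cdot)$ and the second fundamental form of $\Sigma_0$.

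\textbf{Main obstacle.} The hard step is controlling $\mathcal B(h)$ and its sign. I would first try to show that $\mathcal B(h)$ combines into a nonnegative quantity using strict convexity of $\Sigma_0$, for example a term like $\int_\Sigma A(h_\nu^T,h_\nu^T)\ge0$ together with $k_0 h(\nu,\nu)^2$. Failing that, I would absorb $\mathcal B(h)$ into $\int|\nabla h|^2$ via a trace inequality that exploits $h^T=0$ and the TT conditions. Strictness, $F''(0)>0$, then follows because $\nabla h\equiv0$ would force $h$ constant, and a constant traceless $h$ with $h^T=0$ on a strictly convex closed surface must vanish, contradicting $\|h\|=1$.

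Finally, by the $C^{k,\alpha}$ expansion of Theorem~\ref{2}, $F\in C^3$ near $0$ with bounds depending on $h$. So $F(\lambda)\ge \tfrac14F''(0)\lambda^2>0$ for $0<|\lambda|<\varepsilon(h)$. Theorem~\ref{1} then gives $E_{\rm WY}(\Sigma,\tau_*)>0$, and minimality of $\tau_*$ gives $M_{\rm WY}(\Sigma)=E_{\rm WY}(\Sigma,\tau_*)>0$.
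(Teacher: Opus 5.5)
Your architecture matches the paper's: Theorem~\ref{1}, invariance under boundary-preserving diffeomorphisms, $\dot v_0=0$, $F(0)=F'(0)=0$, and a Taylor expansion. Your Dirichlet Einstein--Hilbert/Gibbons--Hawking bookkeeping is a repackaging of Lemma~\ref{lem:BY-first-variation-scalar-flat}, namely $2\frac{d}{d\lambda}\int_{\Sigma_0}H_\lambda\,dA_{\gamma_0}=-\int_{\Omega_0}\langle\dot g_\lambda,\operatorname{Ric}(g_\lambda)\rangle\,dV_{g_\lambda}$ along scalar-flat paths with fixed boundary metric. Differentiating that identity at $\delta$, with $D\operatorname{Ric}_\delta(h)=-\frac12\Delta_\delta h$, gives $F''(0)=-\frac14\int_{\Omega_0}\langle\Delta_\delta h,h\rangle\,dV_\delta$ directly; you never need $\ddot R$ or $\ddot k$.

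The gap is that you stop at exactly the step that decides the theorem. You leave the sign of $\mathcal B(h)=-\frac14\int_{\Sigma_0}\langle h,\nabla^\delta_{\nu_0}h\rangle\,dA_{\gamma_0}$ open, and your fallback cannot work. $\mathcal B(h)$ contains the normal derivative of $h$ on $\Sigma_0$, which no trace inequality bounds by $\int_{\Omega_0}|\nabla h|^2$, and there is no small parameter to absorb it anyway.

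The missing idea is an exact identity from the \emph{tangential} components of $\operatorname{div}_\delta h=0$.
\begin{itemize}
\item On $\Sigma_0$ one has $h(\nu_0,\nu_0)=-\operatorname{tr}_{\gamma_0}h^T=0$. So only $Z_a=h(\nu_0,e_a)$ survive, and $\langle h,\nabla_{\nu_0}h\rangle=2\sum_aZ_a(\nabla_{\nu_0}h)(\nu_0,e_a)$.
\item Since $h(e_a,e_b)\equiv0$ along $\Sigma_0$, expand $\sum_b(\nabla_{e_b}h)(e_b,e_a)$ using $\nabla_{e_b}e_a=\nabla^{\gamma_0}_{e_b}e_a-A_{0\,ba}\nu_0$ and $\nabla_{e_b}\nu_0=A_{0\,bc}e_c$. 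This gives $H_0Z_a+A_0(Z_h,e_a)$.
\item Then $(\operatorname{div}_\delta h)(e_a)=0$ yields $(\nabla_{\nu_0}h)(\nu_0,e_a)=-H_0Z_a-A_0(Z_h,e_a)$, hence
\[
\mathcal B(h)=\frac12\int_{\Sigma_0}\bigl(A_0(Z_h,Z_h)+H_0|Z_h|_{\gamma_0}^2\bigr)\,dA_{\gamma_0}\geq0
\]
by strict convexity.
\end{itemize}
With this, your strictness argument and Taylor step go through as in the paper.

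Two smaller points. First, the term $k_0h(\nu,\nu)^2$ you anticipated does not occur. The leftover $\frac12\int_{\Sigma_0}h(\nu,\nu)k_0$ in your first variation vanishes for the same reason, $h(\nu_0,\nu_0)=0$ on $\Sigma_0$, so the Einstein--Hilbert detour is unnecessary for $F'(0)$. Second, your displayed formula for $\dot k$ is wrong even when $h^T=0$. For $h=2\phi\,dr\otimes dr$ in a normal collar the exact answer is $\dot k=-\phi k$, but your formula gives $-\partial_r\phi-3\phi k$. The correct version is $2\dot k=(d\operatorname{tr}h-\operatorname{div}h)(\nu)-\operatorname{div}_{\Sigma}\bigl(h(\nu,\cdot)^T\bigr)$. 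The error is harmless here only because, for TT $h$, every term except the tangential divergence vanishes.
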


\subsection{Outline of the proof}
\label{subsec:outline-proof}
\noindent The proof of Theorems~(\ref{1})-(\ref{3}) proceeds through four
geometric reductions.  The essential point is that the Lorentzian
Wang--Yau problem is first converted into a Riemannian boundary problem
on the Jang graph; the latter is then reduced, by a conformal
deformation, to the Brown--York functional of a scalar-flat fill-in.
In the small-data regime this scalar-flat metric is represented as the
scalar-flat conformal completion of a transverse--traceless perturbation
of a Euclidean domain, and the desired positivity follows from the
strict positivity of the Brown--York Hessian in TT directions. Below is the summary of the main proof idea presented in step by step while the complete proof is presented through a sequence of propositions in sections \ref{3}-\ref{5}.

\medskip
\noindent
Let $(\Omega,g,K)$ denote the physical initial data bounded by
$\Sigma=\partial\Omega$, and fix an admissible Wang--Yau time function
$\tau$.  We solve Jang's equation on $\Omega$ with Dirichlet data
\[
        f|_{\Sigma}=\tau
\]
and denote by
\[
        (\widehat\Omega,\widehat g)
\]
the resulting Jang graph.  Its boundary
$\widehat\Sigma:=\partial\widehat\Omega$
has induced metric
       $\widehat\sigma
        =
        \sigma+d\tau\otimes d\tau.$
On the reference side, if the Minkowski isometric embedding of
$(\Sigma,\sigma)$ is projected orthogonally onto the spacelike
hyperplane orthogonal to the reference observer $T_0$, the resulting
Euclidean surface has precisely the same induced metric
$\widehat\sigma$.  Denoting its Euclidean mean curvature by $k_0$, the
reference Hamiltonian reduces to
\[
        \int_{\widehat\Sigma}
        k_0\,dA_{\widehat\sigma}.
\]

\noindent For the physical term, let $\widetilde e_4$ denote the downward unit
normal to the Jang graph and $\widetilde e_3$ the outward unit normal to
$\widehat\Sigma$ in $\widehat\Omega$.  Define the Jang vector field
$X$ by
\begin{equation}
\label{eq:outline-X}
        X^\flat
        :=
        \left\langle
        \widetilde\nabla_{\widetilde e_4}\widetilde e_4,
        \,\cdot\,
        \right\rangle
        -
        P(\widetilde e_4,\,\cdot\,),
\end{equation}
where $P$ denotes the extension of $K$ to the product used in the Jang
construction.  Thus
\[
        \langle X,\widetilde e_3\rangle
        =
        \left\langle
        \widetilde\nabla_{\widetilde e_4}\widetilde e_4,
        \widetilde e_3
        \right\rangle
        -
        P(\widetilde e_4,\widetilde e_3).
\]
If $\widetilde k$ denotes the mean curvature of
$\widehat\Sigma\subset\widehat\Omega$, the Wang--Yau reduction therefore
takes the form
\begin{equation}
\label{eq:outline-WY-Jang}
\begin{split}
        8\pi E_{\rm WY}(\Sigma,\tau)
        &=
        \int_{\widehat\Sigma}k_0\,dA_{\widehat\sigma}
        -
        \int_{\widehat\Sigma}
        \left(
        \widetilde k-\langle X,\widetilde e_3\rangle
        \right)
        dA_{\widehat\sigma}
\\
        &=
        \int_{\widehat\Sigma}
        \left(k_0-\mathcal H\right)
        dA_{\widehat\sigma},
\end{split}
\end{equation}
where
\begin{equation}
\label{eq:outline-generalized-H}
        \mathcal H
        :=
        \widetilde k-\langle X,\widetilde e_3\rangle
\end{equation}
is the generalized mean curvature of the Jang boundary.

\noindent The momentum information of the original spacetime has thus not
disappeared; rather, it is encoded by the correction
$\langle X,\widetilde e_3\rangle$ to the ordinary Riemannian mean
curvature.  At this stage the positivity problem has become a
Riemannian problem on the compact manifold
$(\widehat\Omega,\widehat g)$.

\noindent The second fundamental identity supplied by the Jang deformation is
the scalar-curvature estimate
\begin{equation}
\label{eq:outline-Jang-curvature}
        R_{\widehat g}
        \geq
        2|X|_{\widehat g}^{2}
        -
        2\operatorname{div}_{\widehat g}X,
\end{equation}
which follows from the constraint equations and the dominant energy
condition.  This inequality is the input for the next step.

\medskip
\noindent
We solve the Dirichlet problem
\begin{equation}
\label{eq:outline-U-problem}
\left\{
\begin{aligned}
        -8\Delta_{\widehat g}U
        +R_{\widehat g}U&=0
        &&\text{in }\widehat\Omega,\\
        U&=1
        &&\text{on }\widehat\Sigma.
\end{aligned}
\right.
\end{equation}
The curvature inequality \eqref{eq:outline-Jang-curvature} gives the
coercive estimate
\begin{equation}
\label{eq:outline-coercivity}
\begin{split}
\int_{\widehat\Omega}
\left(
8|\widehat\nabla\phi|^2
+
R_{\widehat g}\phi^2
\right)dV_{\widehat g}
\geq{}&
6\int_{\widehat\Omega}
|\widehat\nabla\phi|^2\,dV_{\widehat g}
\\
&+
2\int_{\widehat\Omega}
|\widehat\nabla\phi+\phi X|^2\,dV_{\widehat g}
\end{split}
\end{equation}
for every $\phi\in H^1_0(\widehat\Omega)$.  Consequently the
Dirichlet conformal Laplacian is invertible, and
\eqref{eq:outline-U-problem} possesses a unique positive solution.

\noindent Define
\begin{equation}
\label{eq:outline-gsf}
        g_{\rm sf}:=U^4\widehat g.
\end{equation}
The conformal scalar-curvature identity in dimension three,
\[
        R_{U^4\widehat g}
        =
        U^{-5}
        \left(
        -8\Delta_{\widehat g}U
        +
        R_{\widehat g}U
        \right),
\]
implies
\begin{equation}
\label{eq:outline-gsf-flat}
        R_{g_{\rm sf}}=0.
\end{equation}
Moreover, since $U=1$ on $\widehat\Sigma$,
\begin{equation}
\label{eq:outline-boundary-preserved}
        g_{\rm sf}|_{T\widehat\Sigma}
        =
        \widehat\sigma.
\end{equation}
Thus the reference Euclidean embedding, and hence $k_0$, remain
unchanged.

\noindent Let $H_{\rm sf}$ denote the mean curvature of
$\widehat\Sigma\subset(\widehat\Omega,g_{\rm sf})$.  With the outward
normal convention used throughout the paper,
\begin{equation}
\label{eq:outline-H-conformal}
        H_{\rm sf}
        =
        \widetilde k+4\partial_{\nu}U
        \qquad\text{on }\widehat\Sigma.
\end{equation}
Multiplying \eqref{eq:outline-U-problem} by $U$, integrating by parts,
and using \eqref{eq:outline-Jang-curvature}, one obtains
\begin{equation}
\label{eq:outline-flux}
\begin{split}
\int_{\widehat\Sigma}
\left(
H_{\rm sf}-\mathcal H
\right)dA_{\widehat\sigma}
\geq{}&
3\int_{\widehat\Omega}
|\widehat\nabla U|^2\,dV_{\widehat g}
\\
&+
\int_{\widehat\Omega}
|\widehat\nabla U+UX|^2\,dV_{\widehat g}
\geq0.
\end{split}
\end{equation}
Consequently,
\begin{equation}
\label{eq:outline-WY-BY-decomp}
\begin{split}
8\pi E_{\rm WY}(\Sigma,\tau)
={}&
\int_{\widehat\Sigma}
(k_0-H_{\rm sf})\,dA_{\widehat\sigma}
\\
&+
\int_{\widehat\Sigma}
(H_{\rm sf}-\mathcal H)\,dA_{\widehat\sigma},
\end{split}
\end{equation}
and hence
\begin{equation}
\label{eq:outline-WY-BY}
\begin{split}
E_{\rm WY}(\Sigma,\tau)
\geq{}&
m_{\rm BY}
(\widehat\Sigma;\widehat\Omega,g_{\rm sf})
\\
&+
\frac{3}{8\pi}
\int_{\widehat\Omega}
|\widehat\nabla U|^2\,dV_{\widehat g}
+
\frac{1}{8\pi}
\int_{\widehat\Omega}
|\widehat\nabla U+UX|^2\,dV_{\widehat g}.
\end{split}
\end{equation}
In particular,
\begin{equation}
\label{eq:outline-WY-ge-BY}
        E_{\rm WY}(\Sigma,\tau)
        \geq
        m_{\rm BY}
        (\widehat\Sigma;\widehat\Omega,g_{\rm sf}).
\end{equation}
Thus the problem has been reduced to a purely Riemannian positivity
statement for the Brown--York mass of the scalar-flat metric
$g_{\rm sf}$.

\medskip
\noindent
We next analyze scalar-flat metrics sufficiently close to the Euclidean
reference fill-in.  After a boundary-preserving identification with a
strictly convex Euclidean domain
\[
        \Omega_0\Subset\mathbb R^3,
        \qquad
        \Sigma_0=\partial\Omega_0,
\]
the small-data decomposition established below expresses the
scalar-flat metric arising from Step~2 in the form
\begin{equation}
\label{eq:outline-second-conformal}
        \Phi^*g_{\rm sf}
        =
        v_\lambda^4(\delta+\lambda h),
\end{equation}
where $0<|\lambda|\ll1$ and
\begin{equation}
\label{eq:outline-TT}
        \operatorname{div}_{\delta}h=0,
        \qquad
        \operatorname{tr}_{\delta}h=0,
        \qquad
        h^T|_{\Sigma_0}=0.
\end{equation}
The second conformal factor is obtained from
\begin{equation}
\label{eq:outline-v-problem}
\left\{
\begin{aligned}
        -8\Delta_{\delta+\lambda h}v_\lambda
        +
        R_{\delta+\lambda h}v_\lambda&=0
        &&\text{in }\Omega_0,\\
        v_\lambda&=1
        &&\text{on }\Sigma_0.
\end{aligned}
\right.
\end{equation}
The TT conditions have the decisive consequence
\begin{equation}
\label{eq:outline-linear-scalar}
        DR_{\delta}(h)
        =
        -\Delta_{\delta}
        (\operatorname{tr}_{\delta}h)
        +
        \partial^i\partial^jh_{ij}
        =
        0.
\end{equation}
Differentiating \eqref{eq:outline-v-problem} at $\lambda=0$ therefore
gives
\[
        \Delta_{\delta}\dot v_0=0,
        \qquad
        \dot v_0|_{\Sigma_0}=0,
\]
and hence
\begin{equation}
\label{eq:outline-v-quadratic}
        \dot v_0=0,
        \qquad
        v_\lambda
        =
        1+O_{C^{k,\alpha}}(\lambda^2).
\end{equation}
Thus
\begin{equation}
\label{eq:outline-sf-TT-expansion}
        \Phi^*g_{\rm sf}
        =
        \delta+\lambda h+O_{C^{k,\alpha}}(\lambda^2),
\end{equation}
but, importantly, the metric on the left-hand side is exactly
scalar-flat; \eqref{eq:outline-sf-TT-expansion} is a consequence of the
exact conformal construction \eqref{eq:outline-second-conformal}, not
an independent perturbative ansatz.

\noindent Notice that, in addition to the quasilinear Dirichlet problem defining
the Jang graph, two conformal Dirichlet problems occur in the argument:
\eqref{eq:outline-U-problem}, which converts the Jang metric into a
scalar-flat metric, and \eqref{eq:outline-v-problem}, which realizes
that scalar-flat metric in the TT-conformal coordinates adapted to the
Euclidean reference configuration.

\medskip
\noindent
It remains to establish positivity of the Brown--York mass for the
scalar-flat TT-conformal family
\begin{equation}
\label{eq:outline-gt}
        g_\lambda
        :=
        v_\lambda^4(\delta+\lambda h).
\end{equation}
Since
\[
        R_{g_\lambda}=0,
        \qquad
        g_\lambda|_{T\Sigma_0}=\gamma_0,
        \qquad
        g_0=\delta,
        \qquad
        \dot g_0=h,
\]
the Euclidean metric is a critical point of the Brown--York functional:
\begin{equation}
\label{eq:outline-BY-first}
        m_{\rm BY}(g_0)=0,
        \qquad
        \left.
        \frac{d}{d\lambda}
        \right|_{\lambda=0}
        m_{\rm BY}(g_\lambda)=0.
\end{equation}
If $Z_h\in\Gamma(T\Sigma_0)$ is defined by
\[
        \gamma_0(Z_h,Y)
        =
        h(\nu_0,Y),
        \qquad
        Y\in T\Sigma_0,
\]
a direct computation of the constrained second variation gives
\begin{equation}
\label{eq:outline-BY-Hessian}
\begin{split}
8\pi
\left.
\frac{d^2}{d\lambda^2}
\right|_{\lambda=0}
m_{\rm BY}(g_\lambda)
={}&
\frac14
\int_{\Omega_0}
|\nabla^\delta h|_\delta^2\,dV_\delta
\\
&+
\frac12
\int_{\Sigma_0}
\left(
A_0(Z_h,Z_h)
+
H_0|Z_h|_{\gamma_0}^2
\right)dA_{\gamma_0}.
\end{split}
\end{equation}
Since $\Sigma_0$ is strictly convex,
\[
        A_0>0,
        \qquad
        H_0>0,
\]
and therefore the quadratic form on the right-hand side of
\eqref{eq:outline-BY-Hessian} is strictly positive for every
$0\neq h$ satisfying \eqref{eq:outline-TT}.  It follows that
\begin{equation}
\label{eq:outline-BY-expansion}
\begin{split}
m_{\rm BY}(g_\lambda)
={}&
\frac{\lambda^2}{64\pi}
\int_{\Omega_0}
|\nabla^\delta h|_\delta^2\,dV_\delta
\\
&+
\frac{\lambda^2}{32\pi}
\int_{\Sigma_0}
\left(
A_0(Z_h,Z_h)
+
H_0|Z_h|_{\gamma_0}^2
\right)dA_{\gamma_0}
+
o(\lambda^2),
\end{split}
\end{equation}
and consequently
\begin{equation}
\label{eq:outline-BY-positive}
        m_{\rm BY}(g_\lambda)>0,
        \qquad
        0<|\lambda|\ll1.
\end{equation}

\noindent Combining \eqref{eq:outline-WY-ge-BY} with
\eqref{eq:outline-BY-positive} yields
\[
        E_{\rm WY}(\Sigma,\tau)>0
\]
for every sufficiently small nontrivial perturbation in the class under
consideration.  Finally, for a minimizing admissible time function
$\tau$,
\[
        M_{\rm WY}(\Sigma)
        =
        E_{\rm WY}(\Sigma,\tau),
\]
and hence
\begin{equation}
\label{eq:outline-final-WY}
        M_{\rm WY}(\Sigma)>0.
\end{equation}
At the Minkowski configuration the two conformal factors are identically
one, the TT perturbation vanishes, and all of the inequalities above
are equalities, recovering
\[
        M_{\rm WY}(\Sigma)=0.
\]

\noindent The argument is therefore entirely quasi-local.  No asymptotically
flat extension of the compact fill-in is introduced, and the positive
mass theorem does not enter the proof. 

\section{Acknowledgment}
\noindent This work is supported by BIMSA of Tsinghua University and the Beijing Municipal Government. We thank Prof. Mu-Tao Wang for helpful discussions. 

\section{TT-generated scalar-flat fill-ins near a Euclidean domain}
\label{3}

\noindent In this section, we explicitly construct conformally scalar flat metrics that are generated by the transverse-traceless perturbations of the Euclidean metric. Here the choice of transverse-traceless perturbation can be thought of as a the most physically relevant ones since the pure gravitational degrees of freedom are essentially transverse-tracelss. Let $\Omega\Subset\mathbb R^3$ be a smooth bounded domain with boundary
$\Sigma:=\partial\Omega,$
and let $\delta$ denote the Euclidean metric. We write
\[
\gamma_0:=\delta|_{T\Sigma}.
\]
Fix $k\geq 2$ and $0<\alpha<1$. We state the following important definition and the lemma that explicitly constructs transverse-traceless tensor on a ball. 

\begin{definition}
\label{def:TT-space}
We define
\[
\mathcal T^{k,\alpha}
:=
\left\{
h\in C^{k,\alpha}(\Omega;S^2T^*\Omega):
\operatorname{div}_\delta h=0,\quad
\operatorname{tr}_\delta h=0,\quad
h^T|_\Sigma=0
\right\}.
\]
Here $h^T$ denotes the restriction of $h$ to
$T\Sigma\times T\Sigma$. Elements of $\mathcal T^{k,\alpha}$ will be called TT seeds that will be used to perturb the interior Euclidean metric.
\end{definition}
\noindent For later use, we also introduce
\[
\mathcal T^\infty_c
:=
\left\{
h\in C^\infty_c(\Omega;S^2T^*\Omega):
\operatorname{div}_\delta h=0,\quad
\operatorname{tr}_\delta h=0
\right\}.
\]

\begin{lemma}[Existence of compactly supported TT tensors]
\label{lem:compact-TT}

The space $\mathcal T^\infty_c$ is infinite dimensional.

\end{lemma}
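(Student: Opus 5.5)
We will build compactly supported TT tensors explicitly from potentials via a second-order differential operator that automatically produces divergence-free, trace-free symmetric tensors on flat $\mathbb R^3$. The standard choice is the linearized Einstein (or "double curl") operator. For a symmetric 2-tensor $w\in C^\infty_c(\Omega;S^2T^*\Omega)$, define
\[
(\mathcal{L}w)_{ij}:=\epsilon_{i}{}^{kl}\epsilon_{j}{}^{mn}\partial_k\partial_m w_{ln}.
\]
This is the linearized Einstein tensor of $\delta$ in the direction $w$, up to sign. It is symmetric. It is also divergence free, since $\partial^i\epsilon_i{}^{kl}\partial_k=0$ by antisymmetry. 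Its trace is $\epsilon^{ikl}\epsilon_i{}^{mn}\partial_k\partial_m w_{ln}=(\delta^{km}\delta^{ln}-\delta^{kn}\delta^{lm})\partial_k\partial_m w_{ln}=\Delta\operatorname{tr}w-\partial^k\partial^l w_{kl}$. This need not vanish, so the construction must be adjusted.

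The cleaner route, which I would actually take, uses the symmetrized curl of a TT-type potential, or a double curl applied to a trace-free potential. First I produce divergence-free symmetric tensors with vanishing trace by imposing $\Delta\operatorname{tr}w=\partial^k\partial^lw_{kl}$ on the potential. A simple sufficient choice is $w=\phi\,\delta$ for the Einstein-tensor operator, which fails, so I would rather use the Cotton--York-type operator
\[
(\mathcal{C}w)_{ij}:=\epsilon_{(i}{}^{kl}\partial_k\Big(\Delta w_{j)l}-\partial_{j)}\partial^m w_{ml}-\dots\Big),
\]
i.e.\ the linearization at $\delta$ of the Cotton--York tensor $C_{ij}=\epsilon_i{}^{kl}\nabla_k(\mathrm{Ric}_{lj}-\tfrac14 R g_{lj})$. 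It is classical that the Cotton--York tensor is symmetric, trace-free and divergence-free for every metric. Hence its linearization $\mathcal C:=DC_\delta$ maps $C^\infty_c(\Omega;S^2)$ into symmetric, trace-free, divergence-free tensors. Since $\mathcal C$ is a linear differential operator with constant coefficients, $\operatorname{supp}\mathcal Cw\subset\operatorname{supp}w$, so $\mathcal C$ maps into $\mathcal T^\infty_c$. I would verify these three algebraic properties of the linearization directly from the Bianchi identity at the flat metric, and note that differentiating identities valid for all metrics preserves them.

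It remains to show that the image of $\mathcal C$ on $C^\infty_c$ is infinite dimensional; this is the main point requiring care, since $\mathcal C$ has a large kernel (Lie derivatives $\mathcal L_Y\delta$ and conformal directions $\phi\delta$). I would argue via Fourier transform. The symbol $\widehat{\mathcal C}(\xi)$ is a nonzero homogeneous cubic map, whose image for $\xi\neq0$ is the two-dimensional space of TT matrices transverse to $\xi$. Choose a single $w_0\in C^\infty_c$ with $\mathcal C w_0\neq0$; this exists because a constant-coefficient operator with nonzero symbol cannot annihilate all compactly supported test tensors (take $w_0=\chi(x)e^{i\xi\cdot x}A$ real part, with $A$ in the nonkernel of the symbol). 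Then take translates and dilates $w_m(x)=w_0((x-x_m)/r_m)$ with pairwise disjoint supports inside $\Omega$, which is possible since $\Omega$ is open. The tensors $\mathcal Cw_m$ are nonzero with disjoint supports, hence linearly independent. This gives infinitely many linearly independent elements of $\mathcal T^\infty_c$.

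The obstacle is only bookkeeping: confirming the trace-free and divergence-free properties of the linearized Cotton--York operator. If that verification proved awkward, a fallback is $h=\operatorname{curl}\operatorname{curl}$ applied to a potential $w$ chosen transverse-traceless in Fourier space. Another fallback is to take $h_{ij}=\epsilon_{(i}{}^{kl}\partial_k u_{j)l}$ with $u=\mathcal L w$ (the Einstein-tensor image), subtracting the trace part explicitly.
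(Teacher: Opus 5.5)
Your proposal is correct and follows essentially the same route as the paper: you linearize the Cotton--York tensor at $\delta$, which stays trace- and divergence-free because the extra terms from varying $g$ are multiplied by $\mathcal C(\delta)=0$ (a point worth stating explicitly). You then use locality for the support, check nonvanishing via the symbol, and place nonzero images in pairwise disjoint balls to get linear independence. The only cosmetic difference is that you generate the infinite family by translating and dilating a single potential $w_0$, using that the operator is constant-coefficient and homogeneous, whereas the paper repeats a cutoff construction separately in each ball.
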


\begin{proof}
Let $\mathcal C(g)$ denote the Cotton--York tensor of a
three-dimensional Riemannian metric $g$.  With a fixed choice of
orientation, we use the convention
\[
\mathcal C(g)_{ij}
=
\varepsilon_i{}^{k\ell}
\nabla_k
\left(
\text{Ric}_{g,\ell j}
-\frac14 R_g g_{\ell j}
\right).
\]
The Cotton--York tensor is symmetric, trace-free, and divergence-free:
\begin{equation}
\label{eq:CY-identities}
\mathcal C(g)_{ij}=\mathcal C(g)_{ji},
\qquad
\tr_g\mathcal C(g)=0,
\qquad
\div_g\mathcal C(g)=0.
\end{equation}
Since the Euclidean metric is flat,
\[
\mathcal C(\delta)=0.
\]
Denote by
\[
\mathcal C'_\delta
:
C^\infty(\Omega;S^2T^*\Omega)
\longrightarrow
C^\infty(\Omega;S^2T^*\Omega)
\]
the linearization of $\mathcal C$ at $\delta$.  Differentiating
\eqref{eq:CY-identities} at $\delta$, and using
$\mathcal C(\delta)=0$, gives
\begin{equation}
\label{eq:CYprime-TT}
\tr_\delta \mathcal C'_\delta(q)=0,
\qquad
\div_\delta \mathcal C'_\delta(q)=0
\end{equation}
for every smooth symmetric $2$-tensor $q$.  Notice that the terms
arising from differentiating the metric contraction and the
Levi--Civita connection vanish at $\delta$, since each of them is
multiplied by $\mathcal C(\delta)$.

\noindent Moreover, $\mathcal C'_\delta$ is a third-order local differential
operator.  Hence
\begin{equation}
\label{eq:CY-support}
\operatorname{supp}\mathcal C'_\delta(q)
\subset
\operatorname{supp}q .
\end{equation}

\noindent We next verify that $\mathcal C'_\delta$ is not identically zero.
At the Euclidean metric,
\[
\bigl(D\text{Ric}_\delta(q)\bigr)_{ij}
=
\frac12
\left(
\partial^a\partial_iq_{aj}
+
\partial^a\partial_jq_{ai}
-
\Delta q_{ij}
-
\partial_i\partial_j\tr_\delta q
\right)
\]
and
\[
DR_\delta(q)
=
-\Delta\tr_\delta q
+
\partial^i\partial^jq_{ij}.
\]
Consequently, we have
\begin{equation}
\label{eq:CY-linearization}
\bigl(\mathcal C'_\delta(q)\bigr)_{ij}
=
\varepsilon_i{}^{k\ell}\partial_k
\left[
\bigl(D\text{Ric}_\delta(q)\bigr)_{\ell j}
-\frac14DR_\delta(q)\delta_{\ell j}
\right].
\end{equation}

\noindent Now using the symbol convention $\partial_j\mapsto i\xi_j$, take
\[
\xi=e_1,
\qquad
Q=
\begin{pmatrix}
0&0&0\\
0&1&0\\
0&0&-1
\end{pmatrix}.
\]
Then
\[
\tr_\delta Q=0,
\qquad
\xi^jQ_{ij}=0,
\]
and therefore
\[
\sigma_2(DR_\delta)(\xi)Q=0,
\qquad
\sigma_2(D\text{Ric}_\delta)(\xi)Q
=
\frac12|\xi|^2Q.
\]
It follows from \eqref{eq:CY-linearization} that
\[
\bigl(
\sigma_3(\mathcal C'_\delta)(e_1)Q
\bigr)_{23}
=
\frac{i}{2}\neq0
\]
after fixing the orientation so that
$\varepsilon_{123}=1$.  Thus
\begin{equation}
\label{eq:CYprime-nonzero}
\mathcal C'_\delta\not\equiv0 .
\end{equation}
We now show that the construction can be localized in an arbitrary
ball. Let
$B\Subset\Omega$
be a Euclidean ball and choose $p\in B$.  Since
$\mathcal C'_\delta$ is a nonzero differential operator, there exists
a smooth symmetric $2$-tensor $q$ defined near $p$ such that
\[
\mathcal C'_\delta(q)(p)\neq0.
\]
Choose $\chi\in C_c^\infty(B)$ satisfying $\chi\equiv1$ in a
neighborhood of $p$, and put
\[
\widetilde q:=\chi q.
\]
By locality,
\[
\mathcal C'_\delta(\widetilde q)(p)
=
\mathcal C'_\delta(q)(p)\neq0.
\]
Hence
\[
h_B:=\mathcal C'_\delta(\widetilde q)
\]
is nonzero.  By \eqref{eq:CYprime-TT} and
\eqref{eq:CY-support},
\[
h_B\in C_c^\infty(B;S^2T^*\Omega),
\qquad
\div_\delta h_B=0,
\qquad
\tr_\delta h_B=0.
\]
Thus
\[
0\neq h_B\in\mathcal T_c^\infty.
\]
Finally, choose pairwise disjoint balls
\[
B_j\Subset\Omega,
\qquad j=1,2,\ldots,
\]
and construct
\[
0\neq h_j\in\mathcal T_c^\infty,
\qquad
\operatorname{supp}h_j\subset B_j.
\]
The family $\{h_j\}_{j=1}^\infty$ is linearly independent, since the
supports are pairwise disjoint.  Therefore
\[
\dim\mathcal T_c^\infty=\infty.
\]
\end{proof}

\section{Reduction of the Wang-Yau Quasilocal Energy}
\label{4}
\noindent In this section, we reduce the Wang-Yau quasi-local energy expression to the desired form \ref{eq:WY-mass-reduced-inf} to the desired form \ref{eq:WY-reduced-final-form}. As mentioned earlier, this essentially reduces the problem to a purely Riemannian geometry problem with the momentum information appearing as an extra term that needs to be controlled by means of the energy condition. We recall the reference isometric embedding used in the
Wang--Yau construction.  Let $(\Sigma,\sigma)$ be a smooth
Riemannian $2$--sphere and let
\[
T_0\in \mathbb R^{1,3}
\]
be a fixed future-directed unit timelike vector.  We write
$\langle\cdot,\cdot\rangle_\eta$ for the Minkowski inner product,
with signature $(-,+,+,+)$.

\begin{theorem}[Wang--Yau]
\label{thm:WY-reference-embedding}
Let $\chi\in C^\infty(\Sigma)$ satisfy
$\int_\Sigma \chi\,d\mu_\sigma=0,$
and let $\tau$ be the normalized solution of
\begin{equation}
\label{eq:tau-Poisson}
\Delta_\sigma\tau=\chi,
\end{equation}
Assume that
\begin{equation}
\label{eq:WY-convexity-condition}
K_\sigma
+
\frac{\det_\sigma(\nabla^2_\sigma\tau)}
     {1+|\nabla\tau|_\sigma^2}
>0
\qquad\text{on }\Sigma.
\end{equation}
Then there exists a spacelike isometric embedding
\[
i_0:(\Sigma,\sigma)\longrightarrow\mathbb R^{1,3}
\]
such that
\begin{equation}
\label{eq:WY-time-function}
\tau=-\langle i_0,T_0\rangle_\eta.
\end{equation}
If $\mathbf H_0$ denotes the mean-curvature vector of
$i_0(\Sigma)$, with the convention
$\mathbf H_0=\Delta_\sigma i_0$, then
\begin{equation}
\label{eq:H0-T0-prescribed}
\langle\mathbf H_0,T_0\rangle_\eta=-\chi.
\end{equation}
The embedding is unique up to an ambient isometry preserving $T_0$.
\end{theorem}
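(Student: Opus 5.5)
The plan is to reduce the statement to the classical Weyl problem in the spacelike hyperplane $T_0^\perp\simeq\mathbb R^3$. I choose coordinates with $T_0=\partial_{X^0}$, so $\tau=-\langle i_0,T_0\rangle_\eta=X^0$. Any spacelike embedding with this time function has the form $i_0=(\tau,\widehat X)$, where $\widehat X:\Sigma\to\mathbb R^3$. The isometry condition \eqref{eq:isometric-embedding-Minkowski} is then equivalent to
\[
\widehat X^*\delta=\sigma+d\tau\otimes d\tau=\widehat\sigma,
\]
which is exactly \eqref{eq:projected-sigma}. So I need to show that the metric $\widehat\sigma$ admits an isometric embedding into $\mathbb R^3$, and that this embedding is unique up to rigid motions.

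First I compute the Gauss curvature of $\widehat\sigma$. This is the standard graph-type computation: the Christoffel symbols change by $\widehat\Gamma^c_{ab}=\Gamma^c_{ab}+\frac{\tau^c\nabla^2_{ab}\tau}{1+|\nabla\tau|^2}$. Feeding this into the curvature gives
\[
\widehat K\,d\mu_{\widehat\sigma}
=\Bigl(K_\sigma+\frac{\det_\sigma\nabla^2_\sigma\tau}{1+|\nabla\tau|^2_\sigma}\Bigr)(1+|\nabla\tau|^2_\sigma)^{-1/2}\,d\mu_\sigma .
\]
A consistency check comes from viewing $\widehat\sigma$ as the metric induced on the graph of $\tau$ in the Riemannian product $\Sigma\times\mathbb R$. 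There, by the Gauss equation, $\widehat K$ equals $K_\sigma$ times the normal factor plus $\det(\mathrm{II})$, with $\mathrm{II}=\nabla^2\tau/\sqrt{1+|\nabla\tau|^2}$, up to the metric change. The hypothesis \eqref{eq:WY-convexity-condition} therefore gives $\widehat K>0$. The Weyl--Pogorelov theorem then yields an embedding $\widehat X$ that is unique up to rigid motions of $\mathbb R^3$. Setting $i_0=(\tau,\widehat X)$ gives a spacelike isometric embedding satisfying \eqref{eq:WY-time-function}. Rigid motions of $T_0^\perp$, together with translations along $T_0$ (equivalently, the additive normalization of $\tau$), are exactly the ambient isometries preserving $T_0$, which gives the uniqueness claim.

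Next I verify \eqref{eq:H0-T0-prescribed}. For any isometric immersion into a flat pseudo-Euclidean space, the coordinate functions satisfy $\Delta_\sigma i_0=\mathbf H_0$ componentwise. Pairing with the constant vector $T_0$ gives
\[
\langle\mathbf H_0,T_0\rangle_\eta=\Delta_\sigma\langle i_0,T_0\rangle_\eta=-\Delta_\sigma\tau=-\chi
\]
by \eqref{eq:tau-Poisson}. The condition $\int_\Sigma\chi\,d\mu_\sigma=0$ is only used to solve the Poisson equation, and the normalization of $\tau$ is absorbed by the $T_0$-translations.

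The main obstacle is the curvature identity for $\widehat\sigma$. In particular, I must get the sign and power of $(1+|\nabla\tau|^2)$ right, since it is the positivity of $\widehat K$ that unlocks the Weyl--Pogorelov theorem. I would do this computation in normal coordinates for $\sigma$ at a point, where $\Gamma=0$. There, $\widehat K$ is computed from $\widehat\sigma_{ab}=\delta_{ab}+\tau_a\tau_b$ using the Brioschi formula, or through the graph/Gauss-equation picture above; both routes produce the factor $\det\nabla^2\tau/(1+|\nabla\tau|^2)$.
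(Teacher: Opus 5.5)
Your proposal is correct and follows the paper's argument essentially step for step: you reduce to the Weyl problem for $\widehat\sigma=\sigma+d\tau\otimes d\tau$ in $T_0^\perp$ via $\widehat K=(1+|\nabla\tau|_\sigma^2)^{-1}\bigl(K_\sigma+\det_\sigma(\nabla^2_\sigma\tau)/(1+|\nabla\tau|_\sigma^2)\bigr)$, lift by $i_0=\widehat i_0+\tau T_0$, and pair $\Delta_\sigma i_0=\mathbf H_0$ with $T_0$. The only differences are that you sketch how to derive the $\widehat K$ formula (Christoffel correction, or the Gauss equation for the graph of $\tau$ in $\Sigma\times\mathbb R$), which the paper states without proof, and that translations along $T_0$ are not needed once the normalized $\tau$ is fixed, so uniqueness is really up to Euclidean motions of $T_0^\perp$.
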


\noindent The reduction underlying Theorem~\ref{thm:WY-reference-embedding} is
particularly simple.  Let
\[
T_0^\perp
:=
\{V\in\mathbb R^{1,3}:
  \langle V,T_0\rangle_\eta=0\}
\simeq\mathbb R^3
\]
and define
\begin{equation}
\label{eq:reference-projection}
\widehat i_0
:=
i_0-\tau T_0.
\end{equation}
By \eqref{eq:WY-time-function},
$\widehat i_0(\Sigma)\subset T_0^\perp$.  Its induced metric is
\begin{equation}
\label{eq:hat-sigma}
\widehat\sigma
=
\sigma+d\tau\otimes d\tau.
\end{equation}
The Gaussian curvature of $\widehat\sigma$ is
\begin{equation}
\label{eq:hat-K}
\widehat K
=
\frac{1}{1+|\nabla\tau|_\sigma^2}
\left(
K_\sigma
+
\frac{\det_\sigma(\nabla^2_\sigma\tau)}
     {1+|\nabla\tau|_\sigma^2}
\right).
\end{equation}
Hence \eqref{eq:WY-convexity-condition} implies
$\widehat K>0$.  The Weyl--Nirenberg--Pogorelov theorem therefore
provides an isometric embedding
\[
\widehat i_0:
(\Sigma,\widehat\sigma)
\longrightarrow T_0^\perp\simeq\mathbb R^3,
\]
unique up to Euclidean rigid motions.  The corresponding Minkowski
embedding is recovered by
\[
i_0=\widehat i_0+\tau T_0.
\]
Indeed, for $V,W\in T\Sigma$,
\[
\begin{aligned}
\langle di_0(V),di_0(W)\rangle_\eta
&=
\langle d\widehat i_0(V),d\widehat i_0(W)\rangle_\eta
-d\tau(V)d\tau(W)\\
&=
\widehat\sigma(V,W)-d\tau(V)d\tau(W)
=
\sigma(V,W).
\end{aligned}
\]
Finally,
\[
\langle\mathbf H_0,T_0\rangle_\eta
=
\Delta_\sigma\langle i_0,T_0\rangle_\eta
=
-\Delta_\sigma\tau
=
-\chi,
\]
which gives \eqref{eq:H0-T0-prescribed}.

\noindent We now introduce the reference Hamiltonian density in the form needed
below.  Let $(e_3)_0$ be a spacelike unit normal to
$i_0(\Sigma)$ and let $(e_4)_0$ be the future-directed timelike unit
normal completing it to an oriented orthonormal frame of the normal
bundle.  Define
\begin{equation}
\label{eq:reference-normal-connection}
\alpha_{(e_3)_0}(Y)
:=
\left\langle
\nabla^\eta_Y(e_3)_0,(e_4)_0
\right\rangle_\eta,
\qquad
Y\in T\Sigma.
\end{equation}
The generalized reference mean-curvature density is
\begin{equation}
\label{eq:reference-generalized-H}
h_0(\tau,(e_3)_0)
:=
-\sqrt{1+|\nabla\tau|_\sigma^2}\,
 \langle\mathbf H_0,(e_3)_0\rangle_\eta
-\alpha_{(e_3)_0}(\nabla\tau).
\end{equation}

\noindent Similarly, for the physical embedding
$i:\Sigma\hookrightarrow(\mathcal M,\mathbf g)$, let
$\mathbf H$ denote its spacetime mean-curvature vector and let
$\{e_3,e_4\}$ be the corresponding Wang--Yau canonical normal frame.
We set
\begin{equation}
\label{eq:physical-generalized-H}
h(\tau,e_3)
:=
-\sqrt{1+|\nabla\tau|_\sigma^2}\,
 \mathbf g(\mathbf H,e_3)
-\alpha_{e_3}(\nabla\tau).
\end{equation}
For a fixed admissible time function $\tau$, the reduced Wang--Yau
energy is
\begin{equation}
\label{eq:reduced-WY-energy}
8\pi E_{\rm WY}(\Sigma,\tau)
=
\int_\Sigma
\left(
h_0(\tau,(e_3)_0)-h(\tau,e_3)
\right)d\mu_\sigma.
\end{equation}
The Wang--Yau mass is obtained by minimizing over the admissible class,
\begin{equation}
\label{eq:WY-mass-infimum}
M_{\rm WY}(\Sigma)
=
\inf_{\tau\in\mathcal A(\Sigma)}
E_{\rm WY}(\Sigma,\tau).
\end{equation}
Thus a uniform estimate
\[
E_{\rm WY}(\Sigma,\tau)\geq0
\qquad
\text{for all }\tau\in\mathcal A(\Sigma)
\]
immediately implies
\[
M_{\rm WY}(\Sigma)\geq0.
\]

\noindent The key point on the reference side is that the apparently Lorentzian
quantity in \eqref{eq:reference-generalized-H} is exactly the total
mean curvature of the Euclidean projection
$\widehat i_0(\Sigma)\subset T_0^\perp$.

\begin{proposition}[Reduction of the reference Hamiltonian]
\label{prop:reference-Hamiltonian-reduction}
Let
\[
i_0:(\Sigma,\sigma)\longrightarrow\mathbb R^{1,3}
\]
be an admissible Wang--Yau reference embedding, let
\[
\tau=-\langle i_0,T_0\rangle_\eta,
\qquad
\widehat\sigma=\sigma+d\tau\otimes d\tau,
\]
and let
\[
\widehat i_0=i_0-\tau T_0:
(\Sigma,\widehat\sigma)\longrightarrow T_0^\perp
\]
be its orthogonal projection. Let $\widehat e_3$ denote the outward Euclidean unit normal to
$\widehat i_0(\Sigma)\subset T_0^\perp$, extended off $T_0^\perp$
by parallel translation along the integral curves of $T_0$, and let
$\widehat k_0$ be the Euclidean mean curvature of
$\widehat i_0(\Sigma)$.  Then $\widehat e_3$ is also a spacelike unit
normal to $i_0(\Sigma)$, and
\begin{equation}
\label{eq:reference-pointwise-reduction}
\widehat k_0
=
-\langle\mathbf H_0,\widehat e_3\rangle_\eta
-
\frac{1}{\sqrt{1+|\nabla\tau|_\sigma^2}}\,
\alpha_{\widehat e_3}(\nabla\tau).
\end{equation}
Consequently,
\begin{equation}
\label{eq:reference-integrated-reduction}
\int_{\widehat i_0(\Sigma)}
\widehat k_0\,d\mu_{\widehat\sigma}
=
\int_\Sigma
\left(
-\sqrt{1+|\nabla\tau|_\sigma^2}\,
 \langle\mathbf H_0,\widehat e_3\rangle_\eta
-\alpha_{\widehat e_3}(\nabla\tau)
\right)d\mu_\sigma.
\end{equation}
\end{proposition}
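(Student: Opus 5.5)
The plan is to prove the pointwise identity \eqref{eq:reference-pointwise-reduction} by direct computation in the fixed inertial frame adapted to $T_0$, and then to integrate it using the relation between the two area forms. Throughout, write $a:=\sqrt{1+|\nabla\tau|_\sigma^2}$ and let $\widehat A$ denote the second fundamental form of $\widehat i_0(\Sigma)\subset T_0^\perp$ with respect to $\widehat e_3$, with the sign convention that $\operatorname{tr}_{\widehat\sigma}\widehat A=\widehat k_0$.

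\emph{Step 1: the normal frame.} For $V\in T\Sigma$ we have $di_0(V)=d\widehat i_0(V)+d\tau(V)\,T_0$.
\begin{itemize}
\item Since $\widehat e_3\in T_0^\perp$ is orthogonal to $d\widehat i_0(V)$ and to $T_0$, it is a spacelike unit normal to $i_0(\Sigma)$.
\item I will take as the completing future timelike normal
\[
\widehat e_4:=a^{-1}\bigl(T_0+di_0(\nabla\tau)\bigr).
\]
\item To check this, compute $\langle T_0,di_0V\rangle_\eta=-d\tau(V)$ and $\langle di_0\nabla\tau,di_0V\rangle_\eta=d\tau(V)$, so $\widehat e_4$ is normal to $i_0(\Sigma)$.
\item Similarly $\langle T_0+di_0\nabla\tau,\,T_0+di_0\nabla\tau\rangle_\eta=-1-2|\nabla\tau|^2+|\nabla\tau|^2=-a^2$, so $\widehat e_4$ has unit timelike length.
\item Finally $\langle\widehat e_4,\widehat e_3\rangle_\eta=0$ is immediate.
\end{itemize}
The orientation of the frame needs a brief check against the paper's convention for $\alpha$.

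\emph{Step 2: the mean curvature term.} From $\mathbf H_0=\Delta_\sigma i_0=\Delta_\sigma\widehat i_0+(\Delta_\sigma\tau)T_0$ and $\widehat e_3\perp T_0$, we get $\langle\mathbf H_0,\widehat e_3\rangle_\eta=\langle\Delta_\sigma\widehat i_0,\widehat e_3\rangle$.
\begin{itemize}
\item The Hessians of $\widehat i_0$ computed with $\nabla^\sigma$ and with $\nabla^{\widehat\sigma}$ differ only by terms tangent to $\widehat i_0(\Sigma)$, and these are annihilated by $\widehat e_3$.
\item Hence $-\langle\Delta_\sigma\widehat i_0,\widehat e_3\rangle=\sigma^{ab}\widehat A_{ab}$.
\item Since $\sigma=\widehat\sigma-d\tau\otimes d\tau$ and $1-|d\tau|^2_{\widehat\sigma}=a^{-2}$, Sherman--Morrison gives
\[
\sigma^{-1}=\widehat\sigma^{-1}+a^{2}\,\widehat\nabla\tau\otimes\widehat\nabla\tau,
\qquad \widehat\nabla\tau=a^{-2}\nabla\tau .
\]
\item Therefore $-\langle\mathbf H_0,\widehat e_3\rangle_\eta=\widehat k_0+a^{-2}\widehat A(\nabla\tau,\nabla\tau)$.
\end{itemize}

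\emph{Step 3: the connection term.} Because $\widehat e_3$ is extended by parallel translation along $T_0$ and stays in $T_0^\perp$, we have $\langle\nabla^\eta_Y\widehat e_3,T_0\rangle_\eta=0$ for all $Y$.
\begin{itemize}
\item Also $\langle\nabla^\eta_V\widehat e_3,di_0W\rangle_\eta=\langle d\widehat e_3(V),d\widehat i_0(W)\rangle=\widehat A(V,W)$; this is the Weingarten relation in $T_0^\perp$.
\item Hence $\alpha_{\widehat e_3}(\nabla\tau)=a^{-1}\widehat A(\nabla\tau,\nabla\tau)$.
\item Inserting this and Step 2 into the right-hand side of \eqref{eq:reference-pointwise-reduction}, the two $a^{-2}\widehat A(\nabla\tau,\nabla\tau)$ terms cancel, leaving $\widehat k_0$.
\end{itemize}
Multiplying by $a$ and using $d\mu_{\widehat\sigma}=\det(\mathrm{Id}+\nabla\tau\otimes d\tau)^{1/2}d\mu_\sigma=a\,d\mu_\sigma$ gives \eqref{eq:reference-integrated-reduction}.

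The main obstacle is sign bookkeeping rather than analysis. The conventions for $\mathbf H_0=\Delta_\sigma i_0$ versus the outward mean curvature, the sign in $\alpha$ given that $\langle\widehat e_4,\widehat e_4\rangle_\eta=-1$, and the future-directedness of $\widehat e_4$ must all line up so that the cancellation in Step 3 occurs. I would fix these first on a round sphere with $\tau$ a linear function, where everything is explicit.
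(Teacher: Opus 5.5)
Your argument is correct, and it takes a different route from the paper. The paper never evaluates $\langle\mathbf H_0,\widehat e_3\rangle_\eta$ or $\alpha_{\widehat e_3}(\nabla\tau)$ separately. Instead it computes the trace of $V\mapsto\nabla^\eta_V\widehat e_3$ over the tangent space of the timelike cylinder swept out by $\widehat i_0(\Sigma)$ along $T_0$, in two orthonormal frames. In the frame adapted to $T_0^\perp$ this trace is $\widehat k_0$, because $\nabla^\eta_{T_0}\widehat e_3=0$. In the frame $\{e_1,e_2,\widehat e_4\}$, with the same $\widehat e_4=a^{-1}(T_0+\nabla\tau)$ you chose, it equals $-\langle\mathbf H_0,\widehat e_3\rangle_\eta-\langle\nabla^\eta_{\widehat e_4}\widehat e_3,\widehat e_4\rangle_\eta$. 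Writing $T_0=-\nabla\tau+a\widehat e_4$ and using $\nabla^\eta_{T_0}\widehat e_3=0$ again turns the last term into $a^{-1}\alpha_{\widehat e_3}(\nabla\tau)$.

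The trace-invariance argument is shorter and avoids inverting $\sigma$. However, it relies on the parallel extension of $\widehat e_3$ in the $T_0$ direction. The paper also phrases it as an ambient divergence, which tacitly presumes an extension off the cylinder, although only the three-dimensional trace is actually used. Your computation, via Sherman--Morrison and the Weingarten relation in $T_0^\perp$, uses $\widehat e_3$ only as a vector-valued function on $\Sigma$. It also yields more: the separate identities $-\langle\mathbf H_0,\widehat e_3\rangle_\eta=\widehat k_0+a^{-2}\widehat A(\nabla\tau,\nabla\tau)$ and $\alpha_{\widehat e_3}(\nabla\tau)=a^{-1}\widehat A(\nabla\tau,\nabla\tau)$, which make the cancellation explicit.

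The sign check you defer is already settled by your own formulas. With the paper's conventions $\alpha_{(e_3)_0}(Y)=\langle\nabla^\eta_Y(e_3)_0,(e_4)_0\rangle_\eta$ and $\mathbf H_0=\Delta_\sigma i_0$, no extra sign enters. Moreover $\widehat e_4$ is future directed, since $\langle\widehat e_4,T_0\rangle_\eta=-a<0$, so the round-sphere test is unnecessary.
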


\begin{proof}
Set
\[
a:=\sqrt{1+|\nabla\tau|_\sigma^2}.
\]
Since
\[
\tau=-\langle i_0,T_0\rangle_\eta,
\]
for every $Y\in T\Sigma$,
\[
d\tau(Y)
=
-\langle Y,T_0\rangle_\eta.
\]
Hence the tangential component of $T_0$ along $i_0(\Sigma)$ is
\[
T_0^\top=-\nabla\tau.
\]
It follows that
\begin{equation}
\label{eq:T0-decomposition}
T_0=-\nabla\tau+a\,\widehat e_4,
\end{equation}
where
\[
\widehat e_4
:=
\frac{T_0+\nabla\tau}{a}
\]
is the future-directed unit timelike normal to $i_0(\Sigma)$
orthogonal to $\widehat e_3$. The vector field $\widehat e_3$ is normal to $i_0(\Sigma)$ because,
for every $Y\in T\Sigma$,
\[
di_0(Y)
=
d\widehat i_0(Y)+d\tau(Y)T_0,
\]
while
\[
\langle\widehat e_3,d\widehat i_0(Y)\rangle_\eta
=
\langle\widehat e_3,T_0\rangle_\eta
=
0.
\]
Let $\{e_1,e_2\}$ be a local $\sigma$-orthonormal tangent frame of
$i_0(\Sigma)$.  Since $\widehat e_3$ is unit spacelike,
\[
\left\langle
\nabla^\eta_{\widehat e_3}\widehat e_3,
\widehat e_3
\right\rangle_\eta=0.
\]
Moreover, by construction,
\[
\nabla^\eta_{T_0}\widehat e_3=0.
\]
The ambient Lorentzian divergence of $\widehat e_3$ may therefore be
computed either in the frame adapted to the Euclidean hypersurface
$T_0^\perp$ or in the orthonormal frame
$\{e_1,e_2,\widehat e_3,\widehat e_4\}$.  This gives
\begin{equation}
\label{eq:k0-divergence}
\widehat k_0
=
\sum_{a=1}^2
\left\langle
\nabla^\eta_{e_a}\widehat e_3,e_a
\right\rangle_\eta
-
\left\langle
\nabla^\eta_{\widehat e_4}\widehat e_3,\widehat e_4
\right\rangle_\eta.
\end{equation}
With the convention
\[
\mathbf H_0
=
\sum_{a=1}^2
\left(\nabla^\eta_{e_a}e_a\right)^\perp,
\]
metric compatibility yields
\[
\sum_{a=1}^2
\left\langle
\nabla^\eta_{e_a}\widehat e_3,e_a
\right\rangle_\eta
=
-\langle\mathbf H_0,\widehat e_3\rangle_\eta.
\]
Thus
\begin{equation}
\label{eq:k0-intermediate}
\widehat k_0
=
-\langle\mathbf H_0,\widehat e_3\rangle_\eta
-
\left\langle
\nabla^\eta_{\widehat e_4}\widehat e_3,\widehat e_4
\right\rangle_\eta.
\end{equation}
Using \eqref{eq:T0-decomposition} and
$\nabla^\eta_{T_0}\widehat e_3=0$, we obtain
\[
0
=
\left\langle
\nabla^\eta_{T_0}\widehat e_3,\widehat e_4
\right\rangle_\eta
=
-
\left\langle
\nabla^\eta_{\nabla\tau}\widehat e_3,\widehat e_4
\right\rangle_\eta
+
a
\left\langle
\nabla^\eta_{\widehat e_4}\widehat e_3,\widehat e_4
\right\rangle_\eta.
\]
Therefore
\begin{equation}
\label{eq:e4-derivative}
\left\langle
\nabla^\eta_{\widehat e_4}\widehat e_3,\widehat e_4
\right\rangle_\eta
=
\frac{1}{a}
\alpha_{\widehat e_3}(\nabla\tau).
\end{equation}
Substituting \eqref{eq:e4-derivative} into
\eqref{eq:k0-intermediate} proves
\eqref{eq:reference-pointwise-reduction}. Finally, the matrix determinant lemma applied to
\[
\widehat\sigma=\sigma+d\tau\otimes d\tau
\]
gives
\begin{equation}
\label{eq:area-form-reference}
d\mu_{\widehat\sigma}
=
\sqrt{1+|\nabla\tau|_\sigma^2}\,d\mu_\sigma
=
a\,d\mu_\sigma.
\end{equation}
Multiplying \eqref{eq:reference-pointwise-reduction} by
$d\mu_{\widehat\sigma}$ and using
\eqref{eq:area-form-reference} yields
\eqref{eq:reference-integrated-reduction}.
\end{proof}
\begin{remark} Proposition~\ref{prop:reference-Hamiltonian-reduction} completely
eliminates the Minkowskian reference term: the reference Hamiltonian is
precisely the total Euclidean mean curvature of the projected surface
\[
(\widehat\Sigma,\widehat\sigma)
\simeq
\widehat i_0(\Sigma)\subset T_0^\perp\simeq\mathbb R^3.
\]
It remains to perform the analogous reduction of the physical
Hamiltonian.  This is achieved by solving Jang's equation with boundary
value $\tau$ and expressing the physical generalized mean curvature in
terms of the boundary geometry of the resulting Jang graph.
\end{remark}

\noindent We now turn to the physical Hamiltonian.  In contrast with the reference
term, the physical contribution depends not only on the intrinsic
geometry of $\Sigma$, but also on the momentum data of the ambient
initial hypersurface.  The appropriate analogue of the Euclidean
projection considered above is the Jang deformation.

\noindent Let
\[
(\Omega,g,P),\qquad \partial\Omega=\Sigma,
\]
be the physical initial data, where $P$ is the second fundamental form
of $\Omega$ in the spacetime.  Consider the Riemannian product
\[
(\Omega\times\mathbb R,g+dt^2),
\]
and extend $P$ trivially in the $\mathbb R$-direction:
\begin{equation}
\label{eq:P-extension}
P(\partial_t,\cdot)=0,
\qquad
P|_{T\Omega\times T\Omega}=P.
\end{equation}

\noindent For a function $f:\Omega\rightarrow\mathbb R$, let
\[
\widehat\Omega
:=
\{(x,f(x)):x\in\Omega\}
\subset\Omega\times\mathbb R
\]
be its graph.  The induced metric is
\begin{equation}
\label{eq:Jang-induced-metric}
\widehat g_{ij}
=
g_{ij}+f_i f_j,
\qquad
\widehat g^{ij}
=
g^{ij}
-
\frac{f^if^j}{1+|\nabla f|_g^2}.
\end{equation}
Writing
\[
W:=\sqrt{1+|\nabla f|_g^2},
\]
the downward-pointing unit normal to $\widehat\Omega$ is
\begin{equation}
\label{eq:Jang-downward-normal}
\widetilde e_4
=
\frac{1}{W}
\left(
f^i\partial_i-\partial_t
\right).
\end{equation}
With this convention, the second fundamental form of the graph is
\[
\widehat A_{ij}
=
\frac{\nabla_i\nabla_jf}{W}.
\]

\noindent Jang's equation requires the mean curvature of the graph to agree with
the trace of $P$ over $T\widehat\Omega$:
\begin{equation}
\label{eq:Jang-geometric}
\operatorname{tr}_{\widehat g}(\widehat A-P)=0.
\end{equation}
Equivalently,
\begin{equation}
\label{eq:Jang-coordinate}
\left(
g^{ij}
-
\frac{f^if^j}{1+|\nabla f|_g^2}
\right)
\left(
\frac{\nabla_i\nabla_jf}
     {\sqrt{1+|\nabla f|_g^2}}
-P_{ij}
\right)
=0.
\end{equation}
This is a quasilinear elliptic equation as long as the graph remains
regular. For a prescribed function $\tau\in C^\infty(\Sigma)$, we consider the
Dirichlet problem
\begin{equation}
\label{eq:Jang-Dirichlet}
\left\{
\begin{aligned}
\operatorname{tr}_{\widehat g}(\widehat A-P)&=0
&&\text{in }\Omega,\\
f&=\tau
&&\text{on }\Sigma.
\end{aligned}
\right.
\end{equation}
The boundary of the resulting graph is
\[
\widehat\Sigma
=
\{(x,\tau(x)):x\in\Sigma\}
=
\partial\widehat\Omega,
\]
and its induced metric is
\begin{equation}
\label{eq:Jang-boundary-metric}
\widehat\sigma
=
\sigma+d\tau\otimes d\tau.
\end{equation}
This is precisely the metric induced on the Euclidean projection of
the Wang--Yau reference embedding.  Consequently, the reference and
physical Hamiltonians are naturally expressed on the same Riemannian
surface $(\widehat\Sigma,\widehat\sigma)$.

\noindent The Dirichlet problem for Jang's equation may fail through blow-up at
apparent horizons.  In the regime relevant here, this obstruction is
excluded by the Wang--Yau admissibility hypotheses.  The boundary
estimate needed in the continuity argument is the following.

\begin{theorem}[Wang--Yau]
\label{thm:Jang-boundary-gradient}
Let $f$ solve \eqref{eq:Jang-coordinate} with
$f|_\Sigma=\tau$.  If
\begin{equation}
\label{eq:Jang-boundary-barrier}
\widehat k>
\left|
\operatorname{tr}_{\widehat\Sigma}P
\right|
\qquad\text{on }\widehat\Sigma,
\end{equation}
where $\widehat k$ denotes the mean curvature of
$\widehat\Sigma\subset\widehat\Omega$ with respect to the outward
normal, then the boundary normal derivative of $f$ is uniformly
bounded.
\end{theorem}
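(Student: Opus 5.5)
The plan is a classical barrier argument of Serrin type, adapted to the Jang operator
\[
\mathcal J(f):=\Big(g^{ij}-\frac{f^if^j}{1+|\nabla f|_g^2}\Big)\Big(\frac{\nabla_i\nabla_jf}{\sqrt{1+|\nabla f|_g^2}}-P_{ij}\Big).
\]
$\mathcal J$ is quasilinear elliptic with no zeroth-order dependence on $f$. Hence the comparison principle holds: if $\mathcal J(f^+)\le 0\le \mathcal J(f^-)$ on a domain and $f^-\le f\le f^+$ on its boundary, then $f^-\le f\le f^+$ inside. I will construct such $f^\pm$ in a thin collar $\Omega_{d_0}=\{x\in\Omega:\ d(x)<d_0\}$, where $d=\operatorname{dist}_g(\cdot,\Sigma)$. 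They will satisfy $f^\pm=\tau$ on $\Sigma$. Since $f-f^\pm$ then vanishes on $\Sigma$ and has a sign in the collar, $|\partial_\nu f|\le\max|\partial_\nu f^\pm|$ there, which is the claimed bound.

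\textbf{Barrier construction.} Let $\tilde\tau$ be the extension of $\tau$ constant along the normal geodesics from $\Sigma$. Set
\[
f^\pm:=\tilde\tau\pm\psi(d),\qquad \psi(d)=\frac1\beta\log(1+\kappa d),
\]
with $\psi(0)=0$, $\psi'\ge \kappa/(1+\kappa d_0)$ large and $\psi''=-\beta(\psi')^2<0$. The parameters $\kappa,\beta$ will be chosen large.

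Note first that $|\nabla f^\pm|$ is large in the collar. Hence the unit normal $\widetilde e_4$ of the graph is nearly horizontal, and the trace over $T\widehat\Omega$ degenerates to a trace over the directions of the graph orthogonal to $\nabla f^\pm$. Expanding $\mathcal J(f^\pm)$ in powers of $1/\psi'$ I expect three contributions:
\begin{itemize}
\item a term $\pm\psi''/(\psi')^2\cdot(\text{positive})=\mp\beta\cdot(\text{positive})$, which has the favourable sign;
\item the mean curvature of the level sets of $f^\pm$, which on $\Sigma$ converges to the mean curvature of the graph of $\tau$ over $\Sigma$ inside the cylinder, i.e.\ to $\widehat k$;
\item $\operatorname{tr}_{\widehat\Sigma}P$ plus $O(1/\psi')$ errors involving $\nabla\tilde\tau,\nabla^2\tilde\tau$ and the curvature of the parallel surfaces.
\end{itemize}
The expected outcome is
\[
\pm\mathcal J(f^\pm)\le -c\beta-\big(\widehat k\mp\operatorname{tr}_{\widehat\Sigma}P\big)+o(1).
\]
Hypothesis \eqref{eq:Jang-boundary-barrier} makes the middle term strictly negative near $\Sigma$, with either sign of the $P$-term. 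Both barriers therefore have the required sign once $d_0$ is small and $\kappa$ is large; the $\beta$ term helps and only needs to dominate the lower-order error terms.

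\textbf{Closing the comparison.} On the inner boundary $\{d=d_0\}$ we need $f^-\le f\le f^+$. I will choose $\kappa$ so large that $\psi(d_0)\ge 2\sup_\Omega|f|+\sup|\tau|$. This requires an a priori sup bound on $f$, which I will take from the continuity method in which this estimate is used. In that setting the sup bound comes from comparison with the constant-free structure of $\mathcal J$ together with the same boundary barriers, or is assumed from the interior analysis; this part I will quote rather than reprove. The comparison principle then gives the two-sided control in the collar, and hence the normal-derivative bound.

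\textbf{Main obstacle.} The delicate step is the asymptotic expansion of $\mathcal J(f^\pm)$. The point is to show that the leading coefficient is precisely $\widehat k$, the mean curvature of the boundary graph $\widehat\Sigma$ in $\widehat\Omega$, rather than the mean curvature of $\Sigma$ in $\Omega$, and that the $P$-contribution is exactly $\operatorname{tr}_{\widehat\Sigma}P$. Here the tangential gradient $\nabla\tau$ couples to $\psi'$, so the limiting tangent plane of the graph is $T\widehat\Sigma$ and not $T\Sigma$. I would first compute this in a frame adapted to $\widehat\Sigma\subset\Sigma\times\mathbb R$: take $e_1,e_2$ tangent to $\widehat\Sigma$ and the horizontal normal $\partial_d$. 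I would then verify that all cross terms are $O(1/\psi')$ uniformly in the collar.
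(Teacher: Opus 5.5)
The paper does not prove this statement; it is quoted from Wang--Yau~\cite{yau}. So the comparison is with the standard Serrin-type argument your outline follows: barriers $\tilde\tau\pm\psi(d)$ in a collar, comparison, and a bound in terms of $\sup|f|$. That architecture is fine, but the step you flag as delicate fails, and it fails in the direction that matters.

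Write $N=\nabla d$. Use $|\nabla d|=1$, $\nabla_NN=0$ and $N(\tilde\tau)=0$, so that $\nabla^2d(N,\cdot)=0=\nabla^2\tilde\tau(N,N)$. Put $W^2=1+|\nabla\tilde\tau|^2+\psi'^2$ and $\Delta d=-H_{\Sigma_d}$, where $H_{\Sigma_d}$ is the outward mean curvature of the parallel surfaces. A direct expansion then gives
\[
\pm\mathcal J(f^\pm)
=
-\bigl(H_{\Sigma_d}\pm\operatorname{tr}_{\Sigma_d}P\bigr)
+\frac{\psi''\,(1+|\nabla\tilde\tau|^2)}{W^3}
+O\bigl(1/\psi'\bigr).
\]
The reason is that $\widetilde e_4\propto\nabla\tilde\tau\pm\psi'N-\partial_t$ tends to $\pm N$. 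Hence the graph becomes tangent to the cylinder $\Sigma\times\mathbb R$, i.e.\ to the three-plane $T\Sigma\oplus\mathbb R\partial_t$. The gradient $\nabla\tau$ couples to $\psi'$ only through $f^if^j/W^2$, which is order $1/\psi'$. Likewise, the level sets of $f^\pm$ converge to the parallel surfaces $\Sigma_d$, not to anything built from $\widehat\Sigma$.

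So the order-one coefficient is the mean curvature $k$ of $\Sigma\subset(\Omega,g)$ together with $\operatorname{tr}_\Sigma P$, independent of $\tau$. It is not $\widehat k$, and it is not $\operatorname{tr}_{\widehat\Sigma}P$. The two traces differ by the component along $\nu:=(\partial_t-\nabla\tau)/a$; for example $\operatorname{tr}_\Sigma P=\operatorname{tr}_{\widehat\Sigma}P+P(\nabla\tau,\nabla\tau)/a^2$.

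Also, the $\psi''$-term has size $\beta(1+|\nabla\tilde\tau|^2)/\psi'$, not $c\beta$. It can absorb the $O(1/\psi')$ errors, but it cannot offset a wrong sign at order one. Your barriers therefore close exactly under $k>|\operatorname{tr}_\Sigma P|$, i.e.\ when $\mathbf H=-ke_3+pe_4$ is spacelike with $k>0$.

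The hypothesis \eqref{eq:Jang-boundary-barrier} cannot be made to supply this sign. $\widehat k$ is the mean curvature of $\widehat\Sigma$ inside the graph of the unknown $f$. By \eqref{eq:tilde-e3-explicit}, $\widetilde e_3=\frac aW e_3+\frac{f_3}{W}\nu$, so
\[
\widehat k
=
\frac aW\Bigl(k-\frac{A_\Sigma(\nabla\tau,\nabla\tau)}{a^2}\Bigr)
+\frac{f_3}{W}\,H^{\rm cyl}_\nu ,
\]
where $H^{\rm cyl}_\nu$ is the mean curvature of $\widehat\Sigma\subset\Sigma\times\mathbb R$ along $\nu$. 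This depends on $f_3$, the very quantity to be bounded, so no barrier built from $(g,P,\tau)$ can produce it as a leading coefficient. For $|f_3|$ large one has $\widehat k\approx\operatorname{sgn}(f_3)H^{\rm cyl}_\nu$. The hypothesis can then hold while $k\le|\operatorname{tr}_\Sigma P|$, and it places no bound on $f_3$.

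Your sentence asserting that the hypothesis makes the middle term strictly negative is therefore the gap. Replace the hypothesis by $k>|\operatorname{tr}_\Sigma P|$ on $\Sigma$ and your argument goes through, giving $|\partial_\nu f|\le\psi'(0)$ with $\psi$ depending on $\sup|f|$. That sup bound must come from the interior, no-apparent-horizon analysis; the collar barriers do not provide it, contrary to what you suggest.
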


\noindent Together with the interior estimates for Jang's equation and the
absence of an apparent-horizon obstruction, this boundary estimate
yields solvability of \eqref{eq:Jang-Dirichlet}; see~\cite{yau} and
\cite{schoen1979proof,schoen1981proof}.

We record the admissibility condition in the form used below.

\begin{definition}[Admissible time function]
\label{admi}
Let $i:\Sigma\hookrightarrow(\mathcal M,\mathbf g)$ be a spacelike
embedding.  A function $\tau\in C^\infty(\Sigma)$ is called
\emph{admissible} if the following conditions hold:
\begin{enumerate}
\item
the projected metric
\[
\widehat\sigma
=
\sigma+d\tau\otimes d\tau
\]
has strictly positive Gaussian curvature, equivalently,
\begin{equation}
\label{eq:admissibility-convexity}
K_\sigma
+
\frac{\det_\sigma(\nabla_\sigma^2\tau)}
     {1+|\nabla\tau|_\sigma^2}
>0;
\end{equation}

\item
$\Sigma$ bounds a compact spacelike hypersurface $\Omega$ for which
the Dirichlet problem \eqref{eq:Jang-Dirichlet} admits a smooth
solution;

\item
the generalized physical mean curvature in the canonical normal gauge
determined by the Jang solution satisfies
\begin{equation}
\label{eq:admissibility-H}
h(\Sigma,i,\tau,e_3')>0.
\end{equation}
\end{enumerate}
\end{definition}

\noindent We finally fix the frames used in the physical reduction.  Along
$\widehat\Sigma$, let
\[
\{\widetilde e_1,\widetilde e_2\}
\subset T\widehat\Sigma
\]
be an orthonormal tangent frame with respect to $\widehat\sigma$.
Let $\widetilde e_3$ be the outward unit normal to
$\widehat\Sigma$ within $(\widehat\Omega,\widehat g)$, and let
$\widetilde e_4$ be the downward unit normal to
$\widehat\Omega\subset\Omega\times\mathbb R$.  Thus
\[
\{\widetilde e_1,\widetilde e_2,
  \widetilde e_3,\widetilde e_4\}
\]
is an orthonormal frame of $T(\Omega\times\mathbb R)$ along
$\widehat\Sigma$.

\noindent Likewise, if $\{e_1,e_2\}$ is an orthonormal frame tangent to $\Sigma$
and $e_3$ is the outward unit normal to $\Sigma$ in $(\Omega,g)$, then
\[
\{e_1,e_2,e_3,v\},
\qquad
v:=-\partial_t,
\]
is an orthonormal frame of the product metric along $\Sigma\times
\mathbb R$.  All fields are extended in the $\mathbb R$-direction by
parallel translation. With these conventions, the physical Wang--Yau Hamiltonian can be
expressed entirely in terms of the boundary geometry of the Jang graph
together with the tensor $P$.  This is the content of the next
proposition.

\begin{proposition}
\label{mean2}
Let $(\Omega,g,P)$ be a spacelike initial data set with
$\partial\Omega=\Sigma$, and let
$f\in C^\infty(\Omega)$ solve Jang's equation with boundary value
\[
f|_\Sigma=\tau.
\]
Let $(\widehat\Omega,\widehat g)$ be the graph of $f$ in
$(\Omega\times\mathbb R,g+dt^2)$ and
$\widehat\Sigma=\partial\widehat\Omega$ the graph of $\tau$.
Denote by $\widetilde e_4$ the downward unit normal to
$\widehat\Omega$ and by $\widetilde e_3$ the outward unit normal to
$\widehat\Sigma$ in $\widehat\Omega$.  Let $\widetilde k$ be the
corresponding mean curvature of $\widehat\Sigma$.

\noindent Let $e_3$ be the outward unit normal to $\Sigma$ in $\Omega$, and
let $e_4$ be the future-directed timelike unit normal to $\Omega$ in
the physical spacetime.  Set
\[
f_3:=e_3(f)|_\Sigma,
\qquad
a:=\sqrt{1+|\nabla\tau|_\sigma^2},
\qquad
W:=\sqrt{1+|\nabla f|_g^2}\big|_\Sigma
   =\sqrt{a^2+f_3^2}.
\]
Define $\varphi$ by
\begin{equation}
\label{eq:boost-phi-Jang}
\sinh\varphi=-\frac{f_3}{a},
\qquad
\cosh\varphi=\frac{W}{a},
\end{equation}
and introduce the boosted normal frame
\begin{equation}
\label{eq:boosted-physical-frame}
e_3'
=
\cosh\varphi\,e_3+\sinh\varphi\,e_4,
\qquad
e_4'
=
\sinh\varphi\,e_3+\cosh\varphi\,e_4.
\end{equation}
Then
\begin{equation}
\label{eq:physical-Jang-pointwise}
\widetilde k
-
\left\langle
\widetilde\nabla_{\widetilde e_4}\widetilde e_4,
\widetilde e_3
\right\rangle
+
P(\widetilde e_4,\widetilde e_3)
=
-\langle\mathbf H,e_3'\rangle
-\frac{1}{a}\alpha_{e_3'}(\nabla\tau).
\end{equation}
Consequently,
\begin{equation}
\label{eq:physical-Jang-integrated}
\begin{aligned}
&\int_{\widehat\Sigma}
\left(
\widetilde k
-
\left\langle
\widetilde\nabla_{\widetilde e_4}\widetilde e_4,
\widetilde e_3
\right\rangle
+
P(\widetilde e_4,\widetilde e_3)
\right)d\mu_{\widehat\sigma}
\\
&\hspace{2cm}
=
\int_\Sigma
\left(
-\sqrt{1+|\nabla\tau|_\sigma^2}\,
 \langle\mathbf H,e_3'\rangle
-\alpha_{e_3'}(\nabla\tau)
\right)d\mu_\sigma .
\end{aligned}
\end{equation}
\end{proposition}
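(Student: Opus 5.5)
The plan is to prove the pointwise identity \eqref{eq:physical-Jang-pointwise} by a direct frame computation in the product $(\Omega\times\mathbb R,g+dt^2)$, mirroring the proof of Proposition~\ref{prop:reference-Hamiltonian-reduction}. The integrated form then follows from the area relation $d\mu_{\widehat\sigma}=a\,d\mu_\sigma$, which is proved exactly as in \eqref{eq:area-form-reference}.

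First, express the Jang frame in terms of the product frame $\{e_1,e_2,e_3,v\}$ with $v=-\partial_t$. Along $\widehat\Sigma$, the graph of $f$ is tangent to $e_a+e_a(\tau)\partial_t$ and to $e_3+f_3\partial_t$. The downward normal is $\widetilde e_4=W^{-1}(\nabla f+v)$, and $\nabla f=\nabla\tau+f_3e_3$ on $\Sigma$. The outward conormal $\widetilde e_3$ is the unit vector tangent to $\widehat\Omega$ and orthogonal to $T\widehat\Sigma$. Writing $T\widehat\Sigma$-orthogonality as orthogonality to $e_a-e_a(\tau)v$, one finds
\[
\widetilde e_3=\frac{1}{aW}\Bigl(a^2e_3-f_3\bigl(\nabla\tau+v\bigr)\Bigr).
\]
The point to check is that the pair $(\widetilde e_3,\widetilde e_4)$ is obtained from the pair $\bigl(e_3,\ (\nabla\tau+v)/a\bigr)$ by a Euclidean rotation of angle $\theta$ with $\cos\theta=a/W$ and $\sin\theta=f_3/W$. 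Here $(\nabla\tau+v)/a$ is the Riemannian counterpart of $\widehat e_4$ from the reference computation.

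Second, compute the left side as a divergence, as in \eqref{eq:k0-divergence}. We have $\widetilde k=\sum_a\langle\widetilde\nabla_{\widetilde e_a}\widetilde e_3,\widetilde e_a\rangle$, and the full ambient divergence of $\widetilde e_3$ equals $\widetilde k+\langle\widetilde\nabla_{\widetilde e_4}\widetilde e_3,\widetilde e_4\rangle$. The latter equals $\widetilde k-\langle\widetilde\nabla_{\widetilde e_4}\widetilde e_4,\widetilde e_3\rangle$. So the left side is $\operatorname{div}_{\Omega\times\mathbb R}\widetilde e_3+P(\widetilde e_4,\widetilde e_3)$, computed with the fields extended $t$-invariantly.

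Third, evaluate this divergence in the frame $\{e_1,e_2,e_3,v\}$. The tangential terms produce the mean curvature $k$ of $\Sigma\subset\Omega$, weighted by the coefficient of $e_3$, together with derivatives of the rotation angle along $\Sigma$. The $P$-terms produce $p(e_3,\cdot)$ and $\operatorname{tr}_\Sigma p$ contributions.

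On the spacetime side, use $\langle\mathbf H,e_3\rangle=-k$, $\langle\mathbf H,e_4\rangle=\operatorname{tr}_\Sigma p$ (up to sign conventions), and $\alpha_{e_3}(Y)=p(e_3,Y)$. Then apply the boost formulas \eqref{eq:H-boost} and \eqref{eq:normal-connection-boost} with $\sinh\varphi=-f_3/a$ and $\cosh\varphi=W/a$. This gives
\[
-\langle\mathbf H,e_3'\rangle-\tfrac1a\alpha_{e_3'}(\nabla\tau)=\tfrac{W}{a}k-\tfrac{f_3}{a}\operatorname{tr}_\Sigma p-\tfrac1a p(e_3,\nabla\tau)+\tfrac1a\nabla\tau(\varphi).
\]
The comparison then reduces to matching coefficients. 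The term in $k$ matches because the $e_3$-coefficient is $a/W$, but it appears against the renormalised area. The $P(\widetilde e_4,\widetilde e_3)$ term together with $P(v,\cdot)=0$ produces the $p(e_3,\nabla\tau)$ and $\operatorname{tr}p$ pieces, with $\operatorname{tr}p$ entering through Jang's equation restricted to the boundary.

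The main obstacle is this last bookkeeping. In particular one must show that the derivative of the rotation angle, $\widetilde e_4$-derivatives of $\widetilde e_3$ along $\nabla\tau$, matches $\tfrac1a\nabla\tau(\varphi)$ exactly. Sign conventions for $e_4$ versus $v$ and for $\alpha$ must also be fixed consistently. I would organise the computation by first verifying it in the case $P=0$, $f_3=0$, where it reduces to the reference identity \eqref{eq:reference-pointwise-reduction}, and then tracking the additional terms linear in $f_3$ and $P$.
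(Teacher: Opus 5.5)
Your set-up is the paper's: your $\widetilde e_3=\frac{1}{aW}\bigl(a^2e_3-f_3(\nabla\tau+v)\bigr)$ and $\widetilde e_4=W^{-1}(\nabla f+v)$ are exactly its formulas. Reading the left side as the four-dimensional divergence of $\widetilde e_3$ plus $P(\widetilde e_4,\widetilde e_3)$, and then re-evaluating in $\{e_1,e_2,e_3,v\}$, is what the paper means by decomposing the traces in the two adapted frames. The gap is in your Step 3.

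Since $\widetilde e_4=\frac{f_3}{W}e_3+\frac aW\widehat v$ with $\widehat v=(\nabla\tau+v)/a$, the field $\widetilde e_4$ is transverse to $\Sigma\times\mathbb R$ whenever $f_3\neq0$. So a $t$-invariant extension does not define $\langle\widetilde\nabla_{\widetilde e_4}\widetilde e_3,\widetilde e_4\rangle$, nor the divergence. You must extend $\widetilde e_3$ into $\Omega\times\mathbb R$ as a unit field $E$ orthogonal to the graph normal. For example, take $E=\frac aW e_3-\frac{f_3}{W}\widehat v$, with $e_3$ extended by normal geodesics, $\nabla\tau$ replaced by $\nabla f-f_3e_3$, and $a,W,f_3$ defined accordingly. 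The divergence then contains, besides the tangential terms you list, the normal term
\[
\langle\widetilde\nabla_{e_3}E,e_3\rangle
=e_3\Bigl(\frac aW\Bigr)
=\frac{f_3^2}{aW^3}\nabla^2f(e_3,\nabla\tau)-\frac{af_3}{W^3}\nabla^2f(e_3,e_3).
\]
This term involves the normal--normal Hessian of $f$, which is not boundary data, and removing it is the real job of Jang's equation. At $\Sigma$, using $\nabla^2f(e_a,e_b)=\nabla^2_\sigma\tau(e_a,e_b)+f_3A_{ab}$, Jang's equation reads
\[
\frac{a^2}{W^2}\nabla^2f(e_3,e_3)
=-\Delta_\sigma\tau-kf_3
+\frac{\nabla^2_\sigma\tau(\nabla\tau,\nabla\tau)+f_3A(\nabla\tau,\nabla\tau)+2f_3\nabla^2f(e_3,\nabla\tau)}{W^2}
+W\operatorname{tr}_{\widehat g}P .
\]

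Substituting this does four things:
\begin{enumerate}
\item it produces the extra $\frac{f_3^2}{aW}k$, which turns the tangential coefficient $\frac aW$ of $k$ into the required $\frac Wa$;
\item it produces $-\frac{f_3}{a}\operatorname{tr}_\Sigma P$;
\item it cancels the $P(e_3,e_3)$ and $P(\nabla\tau,\nabla\tau)$ parts of $P(\widetilde e_4,\widetilde e_3)$, leaving $+\frac1aP(e_3,\nabla\tau)$;
\item its leftover $\nabla\tau(f_3)$ and $\nabla^2_\sigma\tau(\nabla\tau,\nabla\tau)$ terms combine with the tangential ones into $\frac1a\nabla\tau(\varphi)$, via $d\varphi=-\frac aW\,d(f_3/a)$.
\end{enumerate}
So your explanation that the $k$-term matches because it ``appears against the renormalised area'' is wrong. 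The identity is pointwise, and $d\mu_{\widehat\sigma}=a\,d\mu_\sigma$ only enters when you integrate.

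Your proposed test case $f_3=0$ is exactly the case where this normal term and the Jang substitution drop out, so it cannot detect the omission. Nor are the remaining corrections merely linear in $f_3$.

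Finally, with the paper's convention $K_{ij}=\widehat g(\widehat\nabla_{\partial_i}n,\partial_j)$ one has $\alpha_{e_3}(Y)=-P(e_3,Y)$ and $\langle\mathbf H,e_4\rangle=-\operatorname{tr}_\Sigma P$. With your $\alpha_{e_3}(Y)=p(e_3,Y)$, the $P(e_3,\nabla\tau)$ terms would come out with the wrong sign.
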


\begin{proof}
We record the boundary geometry of the Jang graph.  Along $\Sigma$,
write
\[
\nabla f=\nabla\tau+f_3e_3.
\]
With $\partial_t$ denoting the unit vector in the $\mathbb R$-factor,
the downward unit normal to the graph is
\begin{equation}
\label{eq:tilde-e4-explicit}
\widetilde e_4
=
\frac{1}{W}
\left(
\nabla\tau+f_3e_3-\partial_t
\right).
\end{equation}
The outward unit normal to $\widehat\Sigma$ within the graph is
\begin{equation}
\label{eq:tilde-e3-explicit}
\widetilde e_3
=
\frac{a}{W}e_3
-
\frac{f_3}{aW}\nabla\tau
+
\frac{f_3}{aW}\partial_t.
\end{equation}
In particular,
\begin{equation}
\label{eq:graph-inner-products}
\langle e_3,\widetilde e_3\rangle
=
\frac{a}{W},
\qquad
\langle e_3,\widetilde e_4\rangle
=
\frac{f_3}{W}.
\end{equation}
Let
\[
k:=\sum_{A=1}^2
\langle\nabla_{e_A}e_3,e_A\rangle,
\qquad
p:=\operatorname{tr}_\Sigma P
=\sum_{A=1}^2P(e_A,e_A).
\]
Using \eqref{eq:tilde-e4-explicit}--\eqref{eq:graph-inner-products},
decomposing the traces with respect to the two adapted orthonormal
frames, and applying Jang's equation
\[
\sum_{i=1}^3
\left\langle
\widetilde\nabla_{\widetilde e_i}\widetilde e_4,
\widetilde e_i
\right\rangle
=
\sum_{i=1}^3P(\widetilde e_i,\widetilde e_i),
\]
one obtains
\begin{equation}
\label{eq:key-Jang-boundary-identity}
\begin{aligned}
&a\left(
\widetilde k
-
\left\langle
\widetilde\nabla_{\widetilde e_4}\widetilde e_4,
\widetilde e_3
\right\rangle
+
P(\widetilde e_4,\widetilde e_3)
\right)
\\
&\qquad
=
Wk-f_3p
-\alpha_{e_3}(\nabla\tau)
+\nabla\tau(\varphi).
\end{aligned}
\end{equation}
Here we have used
\[
P(e_3,\nabla\tau)
=
-\alpha_{e_3}(\nabla\tau),
\]
which follows from the definition of $P$ and metric compatibility.

\noindent It remains to identify the right-hand side of
\eqref{eq:key-Jang-boundary-identity} with the physical Hamiltonian.
With the convention for the spacetime mean-curvature vector used
throughout,
\begin{equation}
\label{eq:H-decomposition}
\mathbf H=-k\,e_3+p\,e_4.
\end{equation}
Therefore, by \eqref{eq:boost-phi-Jang},
\begin{equation}
\label{eq:H-boost-computation}
-a\langle\mathbf H,e_3'\rangle
=
a\bigl(k\cosh\varphi+p\sinh\varphi\bigr)
=
Wk-f_3p.
\end{equation}
Under the Lorentz boost \eqref{eq:boosted-physical-frame}, the normal
connection transforms as
\begin{equation}
\label{eq:alpha-boost}
\alpha_{e_3'}
=
\alpha_{e_3}-d\varphi.
\end{equation}
Hence
\[
-\alpha_{e_3'}(\nabla\tau)
=
-\alpha_{e_3}(\nabla\tau)
+\nabla\tau(\varphi).
\]
Combining this with \eqref{eq:key-Jang-boundary-identity} and
\eqref{eq:H-boost-computation} proves
\eqref{eq:physical-Jang-pointwise}.
Finally,
\[
\widehat\sigma
=
\sigma+d\tau\otimes d\tau,
\qquad
d\mu_{\widehat\sigma}
=
a\,d\mu_\sigma.
\]
Multiplying \eqref{eq:physical-Jang-pointwise} by
$d\mu_{\widehat\sigma}$ and integrating gives
\eqref{eq:physical-Jang-integrated}.
\end{proof}
\section{Proof of the main Theorems}
\label{5}
\noindent Now we have the basic ingredients for the proof of our main result. Firstly, we define the necessary domain and entities.
Let $\Omega_0\Subset{\mathbb R}^3$ be a smooth bounded domain with
connected strictly convex boundary
\[
        \Sigma_0:=\partial\Omega_0.
\]
We denote by $\delta$ the Euclidean metric, by
\[
        \gamma_0:=\delta|_{T\Sigma_0}
\]
the induced metric, by $\nu_0$ the outward Euclidean unit normal, by
$A_0$ the second fundamental form, and by
\[
        H_0:=\operatorname{tr}_{\gamma_0}A_0
\]
the Euclidean mean curvature.  Our sign convention is chosen so that
\[
        A_0>0,\qquad H_0>0
\]
for a strictly convex Euclidean boundary. In this section, we first recall the scalar-flat reduction of the Wang--Yau energy. Essentially following remarkable proposition accomplishes the fact that the positivity of the Wang-Yau energy reduces to the positivity of the Brown-- York energy for the conformally scalar-flat Jang fill-in data. This is essentially Theorem \ref{1}. The following proposition and its proof essentially accomplish it.

\begin{proposition}
\label{prop:WY-scalar-flat-reduction}
Let $\tau$ be an admissible Wang--Yau time function and let
$(\widehat\Omega,\widehat g)$ be the corresponding Jang graph, with
boundary $\widehat\Sigma$.  Let $\nu$ denote the outward
$\widehat g$-unit normal to $\widehat\Sigma$ and let $\widehat H$ denote
its mean curvature.  Define
\[
        X^\flat
        :=
        \left\langle
        \widetilde\nabla_{\widetilde e_4}\widetilde e_4,\cdot
        \right\rangle
        -
        P(\widetilde e_4,\cdot).
\]
Then the dominant energy condition implies
\begin{equation}
\label{eq:WY-Jang-R}
        R_{\widehat g}
        \geq
        2|X|_{\widehat g}^2
        -
        2\operatorname{div}_{\widehat g}X .
\end{equation}
There exists a unique positive solution $U$ of
\begin{equation}
\label{eq:WY-conformal-equation}
\left\{
\begin{aligned}
        -8\Delta_{\widehat g}U
        +R_{\widehat g}U&=0
        &&\text{in }\widehat\Omega,\\
        U&=1
        &&\text{on }\widehat\Sigma .
\end{aligned}
\right.
\end{equation}
The conformally related metric
$g_{\rm sf}:=U^4\widehat g$
satisfies
\begin{equation}
\label{eq:WY-sf-properties}
        R_{g_{\rm sf}}=0,
        \qquad
        g_{\rm sf}|_{T\widehat\Sigma}
        =
        \widehat\sigma .
\end{equation}
Then
\begin{equation}
\label{eq:WY-ge-BY}
        E_{\rm WY}(\Sigma,\tau)
        \geq
        m_{\rm BY}
        (\widehat\Sigma;\widehat\Omega,g_{\rm sf}).
\end{equation}

\end{proposition}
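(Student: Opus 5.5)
The proof has four steps, following the outline in the introduction: the Jang curvature identity, invertibility of the Dirichlet conformal Laplacian, the conformal change of mean curvature, and a flux identity for $U$.

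\textbf{Step 1: the inequality \eqref{eq:WY-Jang-R}.} This is the Schoen--Yau identity for the Jang graph. Compute $R_{\widehat g}$ from the Gauss equation of $\widehat\Omega\subset(\Omega\times\mathbb R,g+dt^2)$. Then use Jang's equation $\operatorname{tr}_{\widehat g}(\widehat A-P)=0$ together with the Codazzi equation, and rewrite the resulting terms involving $\widehat A-P$ via the constraints \eqref{eq:hamiltonian-constraint}--\eqref{eq:momentum-constraint}. This yields
\[
R_{\widehat g}=2(\mu-J(w))+|\widehat A-P|^2_{\widehat g}+2|X|^2_{\widehat g}-2\operatorname{div}_{\widehat g}X ,
\]
where $|w|\le 1$. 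The dominant energy condition \eqref{eq:dominant-energy-initial-data} then gives the inequality. I will cite Schoen--Yau for this computation rather than redo it, checking only that the sign convention for $X$ matches \eqref{eq:Jang-X-definition}.

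\textbf{Step 2: existence and uniqueness of $U$.} For $\phi\in H^1_0(\widehat\Omega)$, integrate $-2\phi^2\operatorname{div}X$ by parts; there are no boundary terms since $\phi$ vanishes on $\widehat\Sigma$. This gives $4\int\phi\langle\widehat\nabla\phi,X\rangle$. Completing the square then gives
\[
\int\bigl(8|\widehat\nabla\phi|^2+R_{\widehat g}\phi^2\bigr)\ge 6\int|\widehat\nabla\phi|^2+2\int|\widehat\nabla\phi+\phi X|^2,
\]
so the quadratic form is coercive by Poincaré. Lax--Milgram applied to $U=1+w$, $w\in H^1_0$, produces a solution, which is smooth by elliptic regularity. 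For positivity, test the equation with $U^-=\min(U,0)\in H^1_0$: coercivity forces $U^-=0$, and the strong maximum principle then gives $U>0$. The conformal formula $R_{U^4\widehat g}=U^{-5}(-8\Delta U+RU)$ gives $R_{g_{\rm sf}}=0$, and $U=1$ on $\widehat\Sigma$ gives $g_{\rm sf}|_{T\widehat\Sigma}=\widehat\sigma$.

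\textbf{Step 3: the main estimate.} The conformal change of mean curvature in dimension three, with $U=1$ on the boundary, gives $H_{\rm sf}=\widehat H+4\partial_\nu U$. The area forms agree on $\widehat\Sigma$, and the reference $k_0$ is unchanged because it depends only on $\widehat\sigma$. Therefore
\[
8\pi E_{\rm WY}-8\pi m_{\rm BY}=\int_{\widehat\Sigma}(H_{\rm sf}-\mathcal H),\qquad H_{\rm sf}-\mathcal H=4\partial_\nu U+\langle X,\nu\rangle ,
\]
using \eqref{eq:WY-reduced-final-form}, where $\widetilde k=\widehat H$ and $\widetilde e_3=\nu$.

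Multiply the equation for $U$ by $U$ and integrate. With $U=1$ on the boundary this gives
\[
8\int_{\widehat\Sigma}\partial_\nu U=8\int|\widehat\nabla U|^2+\int R_{\widehat g}U^2 .
\]
Insert \eqref{eq:WY-Jang-R} and integrate $-2\int U^2\operatorname{div}X$ by parts. This time there is a boundary term, namely $-2\int_{\widehat\Sigma}\langle X,\nu\rangle$, plus the interior term $4\int U\langle\widehat\nabla U,X\rangle$. Completing the square yields
\[
8\int_{\widehat\Sigma}\partial_\nu U+2\int_{\widehat\Sigma}\langle X,\nu\rangle\ge 6\int|\widehat\nabla U|^2+2\int|\widehat\nabla U+UX|^2 .
\]
Dividing by $2$ shows that $\int_{\widehat\Sigma}(4\partial_\nu U+\langle X,\nu\rangle)\ge 0$, which is \eqref{eq:WY-ge-BY}. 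The bulk terms appear with coefficients $3$ and $1$, as in \eqref{eq:outline-flux}.

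\textbf{Main obstacle.} The hard part is Step 1: the sign bookkeeping in the Jang identity and the consistency of conventions. The boundary term $\langle X,\nu\rangle$ produced by the divergence in Step 3 must match exactly, with the right sign and factor, the term $\langle X,\widetilde e_3\rangle$ in $\mathcal H$; the argument depends on that cancellation. I would fix the orientation conventions of $\widetilde e_4$, $\widetilde e_3$ and $P$ once, by following Schoen--Yau, and then verify the boundary-term signs against Proposition~\ref{mean2}.
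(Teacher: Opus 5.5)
Your proposal is correct and follows the paper's proof essentially step for step. The Jang inequality is imported from Schoen--Yau; the paper likewise only recalls it. Coercivity comes from the same $H^1_0$ completing-the-square estimate $6\int|\widehat\nabla\phi|^2+2\int|\widehat\nabla\phi+\phi X|^2$. Positivity follows from testing with the negative part and then applying the strong maximum principle. The conclusion comes from $H_{\rm sf}=\widehat H+4\partial_\nu U$ together with the flux identity obtained by testing the equation against $U$. Your Lax--Milgram formulation with $U=1+w$, $w\in H^1_0$, only makes explicit the existence step that the paper states in one line.
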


\begin{proof}
Throughout the proof, $\nu$ denotes the outward
$\widehat g$--unit normal to $\widehat\Sigma$, and we use the
conventions
\[
\widehat\Delta
=
\operatorname{div}_{\widehat g}\widehat\nabla,
\qquad
\widehat A(Y,Z)
=
\widehat g(\widehat\nabla_Y\nu,Z),
\qquad
\widehat H
=
\operatorname{tr}_{\widehat\sigma}\widehat A .
\]
Recall that
\begin{equation}
\label{eq:Jang-R-lower}
R_{\widehat g}
\geq
2|X|_{\widehat g}^{2}
-
2\operatorname{div}_{\widehat g}X,
\end{equation}
and
\begin{equation}
\label{eq:Jang-generalized-H}
\mathcal H
=
\widehat H-\langle X,\nu\rangle_{\widehat g}.
\end{equation}
First, we prove the existence and the uniqueness of the solution of the Dirichlet problem 
\begin{equation}
\begin{aligned}
        -8\Delta_{\widehat g}U
        +R_{\widehat g}U&=0
        &&\text{in }\widehat\Omega,\\
        U&=1
        &&\text{on }\widehat\Sigma .
\end{aligned}
\end{equation}
This follows using a direct integration by parts argument. For $v\in H^1_0(\widehat\Omega)$, the Jang scalar-curvature
inequality
\[
R_{\widehat g}
\geq
2|X|_{\widehat g}^2
-
2\operatorname{div}_{\widehat g}X
\]
gives
\begin{align}
&\int_{\widehat\Omega}
\left(
8|\widehat\nabla v|_{\widehat g}^2
+
R_{\widehat g}v^2
\right)dV_{\widehat g}
\nonumber\\
&\qquad\geq
8\int_{\widehat\Omega}
|\widehat\nabla v|_{\widehat g}^2\,dV_{\widehat g}
+
2\int_{\widehat\Omega}
|X|_{\widehat g}^2v^2\,dV_{\widehat g}
-
2\int_{\widehat\Omega}
v^2\operatorname{div}_{\widehat g}X\,dV_{\widehat g}.
\label{eq:coercivity-1}
\end{align}
Since
\[
\operatorname{div}_{\widehat g}(v^2X)
=
v^2\operatorname{div}_{\widehat g}X
+
2v\langle X,\widehat\nabla v\rangle_{\widehat g},
\]
the divergence theorem yields
\begin{align}
\int_{\widehat\Omega}
v^2\operatorname{div}_{\widehat g}X\,dV_{\widehat g}
&=
\int_{\partial\widehat\Omega}
v^2\langle X,\nu\rangle_{\widehat g}\,
dA_{\widehat g}
-
2\int_{\widehat\Omega}
v\langle X,\widehat\nabla v\rangle_{\widehat g}\,
dV_{\widehat g}.
\end{align}
Because $v\in H^1_0(\widehat\Omega)$, its trace vanishes on
$\partial\widehat\Omega$, and hence
\[
\int_{\widehat\Omega}
v^2\operatorname{div}_{\widehat g}X\,dV_{\widehat g}
=
-2\int_{\widehat\Omega}
v\langle X,\widehat\nabla v\rangle_{\widehat g}\,
dV_{\widehat g}.
\]
Therefore \eqref{eq:coercivity-1} becomes
\begin{align}
&\int_{\widehat\Omega}
\left(
8|\widehat\nabla v|_{\widehat g}^2
+
R_{\widehat g}v^2
\right)dV_{\widehat g}
\nonumber\\
&\qquad\geq
\int_{\widehat\Omega}
\left(
8|\widehat\nabla v|_{\widehat g}^2
+
2v^2|X|_{\widehat g}^2
+
4v\langle X,\widehat\nabla v\rangle_{\widehat g}
\right)dV_{\widehat g}.
\label{eq:coercivity-2}
\end{align}
Now
\begin{align}
&8|\widehat\nabla v|_{\widehat g}^2
+
2v^2|X|_{\widehat g}^2
+
4v\langle X,\widehat\nabla v\rangle_{\widehat g}
\nonumber\\
&\qquad=
6|\widehat\nabla v|_{\widehat g}^2
+
2\left(
|\widehat\nabla v|_{\widehat g}^2
+
v^2|X|_{\widehat g}^2
+
2v\langle X,\widehat\nabla v\rangle_{\widehat g}
\right)
\nonumber\\
&\qquad=
6|\widehat\nabla v|_{\widehat g}^2
+
2|\widehat\nabla v+vX|_{\widehat g}^2.
\end{align}
Consequently,
\begin{equation}
\label{eq:conformal-coercivity}
\begin{aligned}
\int_{\widehat\Omega}
\left(
8|\widehat\nabla v|_{\widehat g}^2
+
R_{\widehat g}v^2
\right)dV_{\widehat g}
\geq{}&
6\int_{\widehat\Omega}
|\widehat\nabla v|_{\widehat g}^2\,dV_{\widehat g}
+
2\int_{\widehat\Omega}
|\widehat\nabla v+vX|_{\widehat g}^2\,dV_{\widehat g}.
\end{aligned}
\end{equation}
By the coercivity estimate established above, the Dirichlet realization
of
\[
L_{\widehat g}
:=
-8\widehat\Delta+R_{\widehat g}
\]
is strictly positive.  Hence the boundary-value problem
\begin{equation}
\label{eq:WY-conformal-equation}
\left\{
\begin{aligned}
-8\widehat\Delta U+R_{\widehat g}U&=0
&&\text{in }\widehat\Omega,\\
U&=1
&&\text{on }\widehat\Sigma
\end{aligned}
\right.
\end{equation}
has a unique solution.  Finally, for the positivity, let
\[
U_-:=\max\{-U,0\}\in H^1_0(\widehat\Omega).
\]
Testing $L_{\widehat g}U=0$ against $U_-$ gives
\[
0
=
\int_{\widehat\Omega}
\left(
8\langle\widehat\nabla U,\widehat\nabla U_-\rangle
+
R_{\widehat g}UU_-
\right)dV_{\widehat g}.
\]
On $\{U<0\}$,
\[
U=-U_-,
\qquad
\widehat\nabla U=-\widehat\nabla U_-,
\]
and therefore
\[
0
=
-
\int_{\widehat\Omega}
\left(
8|\widehat\nabla U_-|^2
+
R_{\widehat g}U_-^2
\right)dV_{\widehat g}.
\]
Hence, by \eqref{eq:conformal-coercivity},
\[
0
\geq
6\int_{\widehat\Omega}
|\widehat\nabla U_-|^2\,dV_{\widehat g},
\]
so
\[
U_-\equiv0.
\]
Thus $U\geq0$.  Since $U=1$ on
$\partial\widehat\Omega$, $U\not\equiv0$, and the strong maximum
principle gives
\[
U>0
\qquad\text{in }\widehat\Omega.
\]
Set
\begin{equation}
\label{eq:gsf-definition}
g_{\rm sf}:=U^4\widehat g.
\end{equation}
We first verify scalar flatness explicitly.  Write
\[
g_{\rm sf}=e^{2\varphi}\widehat g,
\qquad
\varphi:=2\log U.
\]
In dimension three the scalar curvature under
$\widetilde g=e^{2\varphi}g$ transforms according to
\[
R_{\widetilde g}
=
e^{-2\varphi}
\left(
R_g-4\Delta_g\varphi-2|\nabla\varphi|_g^2
\right).
\]
Since
\[
\widehat\nabla\varphi
=
2U^{-1}\widehat\nabla U,
\qquad
|\widehat\nabla\varphi|_{\widehat g}^2
=
4U^{-2}|\widehat\nabla U|_{\widehat g}^2,
\]
and
\[
\widehat\Delta\varphi
=
2\widehat\Delta(\log U)
=
2U^{-1}\widehat\Delta U
-
2U^{-2}|\widehat\nabla U|_{\widehat g}^2,
\]
we obtain
\begin{align}
R_{g_{\rm sf}}
&=
U^{-4}
\left[
R_{\widehat g}
-
8U^{-1}\widehat\Delta U
+
8U^{-2}|\widehat\nabla U|^2
-
8U^{-2}|\widehat\nabla U|^2
\right]
\nonumber\\
&=
U^{-4}
\left(
R_{\widehat g}
-
8U^{-1}\widehat\Delta U
\right)
\nonumber\\
&=
U^{-5}
\left(
-8\widehat\Delta U
+
R_{\widehat g}U
\right)
=0.
\label{eq:scalar-curvature-conformal-explicit}
\end{align}
Thus
\[
R_{g_{\rm sf}}=0.
\]
Moreover,
\[
g_{\rm sf}|_{T\widehat\Sigma}
=
U^4\widehat g|_{T\widehat\Sigma}
=
\widehat\sigma,
\]
because $U=1$ on $\widehat\Sigma$.  Hence the boundary metric, and
therefore its Euclidean isometric embedding and the corresponding
reference mean curvature $k_0$, are unchanged.

\noindent We next compute the transformation of the boundary mean curvature. This is standard computation, but we provide the detailed comupations for self consistency.
More generally, let
\[
\widetilde g=e^{2\varphi}g.
\]
If $\nu$ is the $g$--unit outward normal, then
\[
\widetilde\nu=e^{-\varphi}\nu,
\]
since
\[
\widetilde g(\widetilde\nu,\widetilde\nu)
=
e^{2\varphi}
g(e^{-\varphi}\nu,e^{-\varphi}\nu)
=1.
\]
The Levi--Civita connections are related by
\begin{equation}
\label{eq:conformal-connection}
\widetilde\nabla_YZ
=
\nabla_YZ
+
Y(\varphi)Z
+
Z(\varphi)Y
-
g(Y,Z)\nabla\varphi.
\end{equation}
For $Y\in T\widehat\Sigma$,
\begin{align}
\widetilde\nabla_Y\widetilde\nu
&=
\widetilde\nabla_Y(e^{-\varphi}\nu)
\nonumber\\
&=
-e^{-\varphi}Y(\varphi)\nu
+
e^{-\varphi}\widetilde\nabla_Y\nu
\nonumber\\
&=
-e^{-\varphi}Y(\varphi)\nu
+
e^{-\varphi}
\left(
\nabla_Y\nu
+
Y(\varphi)\nu
+
\nu(\varphi)Y
-
g(Y,\nu)\nabla\varphi
\right).
\end{align}
Since $Y$ is tangential,
\[
g(Y,\nu)=0,
\]
and therefore
\begin{equation}
\label{eq:normal-connection-conformal}
\widetilde\nabla_Y\widetilde\nu
=
e^{-\varphi}
\left(
\nabla_Y\nu+\nu(\varphi)Y
\right).
\end{equation}
Consequently the second fundamental forms satisfy, for
$Y,Z\in T\widehat\Sigma$,
\begin{align}
\widetilde A(Y,Z)
&=
\widetilde g
\left(
\widetilde\nabla_Y\widetilde\nu,Z
\right)
\nonumber\\
&=
e^{2\varphi}
g
\left(
e^{-\varphi}
\left(
\nabla_Y\nu+\nu(\varphi)Y
\right),
Z
\right)
\nonumber\\
&=
e^\varphi
\left(
A(Y,Z)
+
\nu(\varphi)\gamma(Y,Z)
\right).
\label{eq:A-conformal}
\end{align}
Since
\[
\widetilde\gamma=e^{2\varphi}\gamma,
\qquad
\widetilde\gamma^{ab}
=
e^{-2\varphi}\gamma^{ab},
\]
and $\dim\widehat\Sigma=2$, tracing
\eqref{eq:A-conformal} gives
\begin{align}
\widetilde H
&=
\widetilde\gamma^{ab}\widetilde A_{ab}
\nonumber\\
&=
e^{-2\varphi}\gamma^{ab}
e^\varphi
\left(
A_{ab}+\nu(\varphi)\gamma_{ab}
\right)
\nonumber\\
&=
e^{-\varphi}
\left(
H+\nu(\varphi)\gamma^{ab}\gamma_{ab}
\right)
\nonumber\\
&=
e^{-\varphi}
\left(
H+2\nu(\varphi)
\right).
\label{eq:H-general-conformal}
\end{align}
For
\[
g_{\rm sf}=U^4\widehat g,
\qquad
\varphi=2\log U,
\]
we have
\[
e^{-\varphi}=U^{-2},
\qquad
\nu(\varphi)
=
2U^{-1}\partial_\nu U,
\]
and hence
\begin{equation}
\label{eq:Hsf-full}
H_{\rm sf}
=
U^{-2}
\left(
\widehat H
+
4U^{-1}\partial_\nu U
\right).
\end{equation}
Since $U=1$ on $\widehat\Sigma$,
\begin{equation}
\label{eq:Hsf-boundary}
H_{\rm sf}
=
\widehat H+4\partial_\nu U.
\end{equation}
We now derive the required boundary flux estimate.  Multiplying
\eqref{eq:WY-conformal-equation} by $U$ and integrating over
$\widehat\Omega$ gives
\[
0
=
-8\int_{\widehat\Omega}
U\widehat\Delta U\,dV_{\widehat g}
+
\int_{\widehat\Omega}
R_{\widehat g}U^2\,dV_{\widehat g}.
\]
Green's formula, with $\nu$ outward, yields
\[
\int_{\widehat\Omega}
U\widehat\Delta U\,dV_{\widehat g}
=
\int_{\widehat\Sigma}
U\partial_\nu U\,dA_{\widehat\sigma}
-
\int_{\widehat\Omega}
|\widehat\nabla U|^2\,dV_{\widehat g}.
\]
Since $U=1$ on $\widehat\Sigma$,
\begin{equation}
\label{eq:integrated-conformal-U}
8\int_{\widehat\Sigma}
\partial_\nu U\,dA_{\widehat\sigma}
=
\int_{\widehat\Omega}
\left(
8|\widehat\nabla U|^2
+
R_{\widehat g}U^2
\right)dV_{\widehat g}.
\end{equation}
Using \eqref{eq:Jang-R-lower},
\begin{align}
8\int_{\widehat\Sigma}\partial_\nu U\,dA_{\widehat\sigma}
\geq{}&
8\int_{\widehat\Omega}|\widehat\nabla U|^2\,dV_{\widehat g}
+
2\int_{\widehat\Omega}U^2|X|^2\,dV_{\widehat g}
\nonumber\\
&-
2\int_{\widehat\Omega}
U^2\operatorname{div}_{\widehat g}X\,dV_{\widehat g}.
\label{eq:flux-step1}
\end{align}
Now compute
\[
\operatorname{div}_{\widehat g}(U^2X)
=
U^2\operatorname{div}_{\widehat g}X
+
2U\langle\widehat\nabla U,X\rangle_{\widehat g},
\]
so the divergence theorem gives
\begin{align}
\int_{\widehat\Omega}
U^2\operatorname{div}_{\widehat g}X\,dV_{\widehat g}
&=
\int_{\widehat\Sigma}
U^2\langle X,\nu\rangle_{\widehat g}\,
dA_{\widehat\sigma}
-
2\int_{\widehat\Omega}
U\langle\widehat\nabla U,X\rangle_{\widehat g}\,
dV_{\widehat g}
\nonumber\\
&=
\int_{\widehat\Sigma}
\langle X,\nu\rangle_{\widehat g}\,
dA_{\widehat\sigma}
-
2\int_{\widehat\Omega}
U\langle\widehat\nabla U,X\rangle_{\widehat g}\,
dV_{\widehat g}.
\label{eq:divergence-U2X}
\end{align}
Substituting \eqref{eq:divergence-U2X} into
\eqref{eq:flux-step1} gives
\begin{align}
&8\int_{\widehat\Sigma}\partial_\nu U\,dA_{\widehat\sigma}
+
2\int_{\widehat\Sigma}
\langle X,\nu\rangle_{\widehat g}\,
dA_{\widehat\sigma}
\nonumber\\
&\qquad\geq
\int_{\widehat\Omega}
\left(
8|\widehat\nabla U|^2
+
2U^2|X|^2
+
4U\langle\widehat\nabla U,X\rangle
\right)dV_{\widehat g}.
\label{eq:flux-step2}
\end{align}
The integrand on the right satisfies
\begin{align}
8|\widehat\nabla U|^2
+
2U^2|X|^2
+
4U\langle\widehat\nabla U,X\rangle
&=
6|\widehat\nabla U|^2
+
2|\widehat\nabla U+UX|^2.
\end{align}
Therefore
\begin{equation}
\label{eq:flux-final}
4\int_{\widehat\Sigma}\partial_\nu U\,dA_{\widehat\sigma}
+
\int_{\widehat\Sigma}
\langle X,\nu\rangle_{\widehat g}\,
dA_{\widehat\sigma}
\geq
3\int_{\widehat\Omega}
|\widehat\nabla U|^2\,dV_{\widehat g}
+
\int_{\widehat\Omega}
|\widehat\nabla U+UX|^2\,dV_{\widehat g}.
\end{equation}
Now combining \eqref{eq:Jang-generalized-H} and
\eqref{eq:Hsf-boundary},
\begin{align}
H_{\rm sf}-\mathcal H
&=
\widehat H+4\partial_\nu U
-
\left(
\widehat H-\langle X,\nu\rangle_{\widehat g}
\right)
\nonumber\\
&=
4\partial_\nu U+\langle X,\nu\rangle_{\widehat g}.
\label{eq:Hsf-minus-Hgeneral}
\end{align}
Hence \eqref{eq:flux-final} gives
\begin{equation}
\label{eq:H-comparison-final}
\int_{\widehat\Sigma}
(H_{\rm sf}-\mathcal H)\,dA_{\widehat\sigma}
\geq
3\int_{\widehat\Omega}
|\widehat\nabla U|^2\,dV_{\widehat g}
+
\int_{\widehat\Omega}
|\widehat\nabla U+UX|^2\,dV_{\widehat g}
\geq0.
\end{equation}
Finally, from the reduced Wang--Yau identity
\[
8\pi E_{\rm WY}(\Sigma,\tau)
=
\int_{\widehat\Sigma}
(k_0-\mathcal H)\,dA_{\widehat\sigma},
\]
we add and subtract $H_{\rm sf}$:
\begin{align}
8\pi E_{\rm WY}(\Sigma,\tau)
&=
\int_{\widehat\Sigma}
(k_0-H_{\rm sf})\,dA_{\widehat\sigma}
+
\int_{\widehat\Sigma}
(H_{\rm sf}-\mathcal H)\,dA_{\widehat\sigma}
\nonumber\\
&\geq
8\pi
m_{\rm BY}
(\widehat\Sigma;\widehat\Omega,g_{\rm sf})
+
3\int_{\widehat\Omega}
|\widehat\nabla U|^2\,dV_{\widehat g}
\nonumber\\
&\qquad
+
\int_{\widehat\Omega}
|\widehat\nabla U+UX|^2\,dV_{\widehat g}.
\end{align}
Therefore
\begin{equation}
\label{eq:WY-BY-final-comparison}
E_{\rm WY}(\Sigma,\tau)
\geq
m_{\rm BY}
(\widehat\Sigma;\widehat\Omega,g_{\rm sf})
+
\frac{3}{8\pi}
\int_{\widehat\Omega}
|\widehat\nabla U|^2\,dV_{\widehat g}
+
\frac{1}{8\pi}
\int_{\widehat\Omega}
|\widehat\nabla U+UX|^2\,dV_{\widehat g}.
\end{equation}
In particular,
\[
E_{\rm WY}(\Sigma,\tau)
\geq
m_{\rm BY}
(\widehat\Sigma;\widehat\Omega,g_{\rm sf}).
\]
This completes the proof.
\end{proof}

\noindent The main point is that, in the previous proposition \ref{prop:WY-scalar-flat-reduction} (or equivalently the theorem (\ref{1})), we reduced the Wang-Yau mass to the Brown-york mass of a scalar-flat metric conformal to the Jang fill-in metric with a boundary-preserving conformal transformation. In the remaining part, we prove that this Brown-- York mass is non-negative.

\noindent In this section we compute the second variation of the Brown--York
mass at the Euclidean metric along scalar-flat deformations generated
by transverse-traceless tensors and preserving the induced boundary
metric. Before the second variation computation, we prove the second main theorem \ref{2}. Let
\[
\Omega_0\Subset\mathbb R^3,
\qquad
\Sigma_0:=\partial\Omega_0,
\]
be a smooth bounded domain with connected strictly convex boundary.
We denote by $\delta$ the Euclidean metric and set
\[
\gamma_0:=\delta|_{T\Sigma_0}.
\]
Let $\nu_0$ be the outward Euclidean unit normal and define
\[
A_0(Y,Z)
:=
\delta(\nabla^\delta_Y\nu_0,Z),
\qquad
H_0:=\operatorname{tr}_{\gamma_0}A_0.
\]
Thus
\[
A_0>0,
\qquad
H_0>0
\]
on $\Sigma_0$ by its convexity property. In the following two propositions, we identify the scalar flat class of metrics generated by $TT$ perturbations of the Euclidean data and subsequent conformal transformations. The first proposition deals with the TT-generated scalar-flat completion. Essentially propositions (\ref{prop:TT-scalar-flat-completion}) and (\ref{prop:physical-realization-TT}) accomplish the proof of the theorem \ref{2}.
\begin{proposition}
\label{prop:TT-scalar-flat-completion}
Let $(\Omega_0,\delta)\subset\mathbb R^3$ be a smooth bounded
Euclidean domain with boundary
\[
\Sigma_0:=\partial\Omega_0,
\qquad
\gamma_0:=\delta|_{T\Sigma_0}.
\]
Fix
\begin{equation}
\label{eq:TT-h-assumptions}
0\neq h\in
\mathcal{T}^{k,\alpha}(\Omega_0;S^2T^*\Omega_0),
\qquad
k\geq2,
\qquad
0<\alpha<1,
\end{equation}
Then there exists $\varepsilon>0$ and a unique smooth map
$
(-\varepsilon,\varepsilon)
\ni\lambda
\longmapsto
v_\lambda\in C^{k,\alpha}(\Omega_0)
$
sufficiently close to $1$, with $v_\lambda>0$ and
$v_\lambda|_{\Sigma_0}=1$, such that
\begin{equation}
\label{eq:vlambda-equation}
-8\Delta_{\delta+\lambda h}v_\lambda
+
R_{\delta+\lambda h}v_\lambda
=
0
\qquad\text{in }\Omega_0.
\end{equation}
Consequently,
$G(\lambda h)
:=
g_\lambda
:=
v_\lambda^4(\delta+\lambda h)$
defines a smooth one-parameter family of Riemannian metrics satisfying
\begin{equation}
\label{eq:TT-family-properties}
R_{g_\lambda}=0,
\qquad
g_\lambda|_{T\Sigma_0}=\gamma_0,
\qquad
g_0=\delta.
\end{equation}
Moreover,
\begin{equation}
\label{eq:v-lambda-quadratic}
v_\lambda
=
1+
O_{C^{k,\alpha}}(\lambda^2).
\end{equation}
\end{proposition}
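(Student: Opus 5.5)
We will obtain $v_\lambda$ from the implicit function theorem applied to a Dirichlet problem for the conformal Laplacian. Writing $v=1+w$ with $w\in C^{k,\alpha}_0(\Omega_0):=\{w\in C^{k,\alpha}(\Omega_0):w|_{\Sigma_0}=0\}$, we define
\[
\mathcal F(\lambda,w):=-8\Delta_{\delta+\lambda h}w+R_{\delta+\lambda h}(1+w),
\qquad
\mathcal F:(-\varepsilon_0,\varepsilon_0)\times C^{k,\alpha}_0(\Omega_0)\longrightarrow C^{k-2,\alpha}(\Omega_0).
\]
For $|\lambda|$ small, $\delta+\lambda h$ is a Riemannian metric whose coefficients are $C^{k,\alpha}$. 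The coefficients of $\Delta_{\delta+\lambda h}$ and $R_{\delta+\lambda h}$ are rational functions of $\lambda h$ and its first two derivatives, so they depend real-analytically on $\lambda$ in $C^{k-1,\alpha}$ and $C^{k-2,\alpha}$ respectively. Hence $\mathcal F$ is smooth. It satisfies $\mathcal F(0,0)=0$, since $R_\delta=0$, and
\[
D_w\mathcal F(0,0)=-8\Delta_\delta:C^{k,\alpha}_0\to C^{k-2,\alpha},
\]
which is an isomorphism by Schauder theory for the Dirichlet Laplacian on a smooth bounded domain. The implicit function theorem therefore gives $\varepsilon>0$ and a unique smooth curve $\lambda\mapsto w_\lambda$ near $0$ with $\mathcal F(\lambda,w_\lambda)=0$. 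Setting $v_\lambda:=1+w_\lambda$ gives $v_\lambda|_{\Sigma_0}=1$. Shrinking $\varepsilon$ so that $\|w_\lambda\|_{C^0}<1/2$ gives $v_\lambda>0$, and uniqueness holds in the neighborhood of $1$ furnished by the theorem. This is exactly the ``sufficiently close to $1$'' clause.

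The properties \eqref{eq:TT-family-properties} follow directly. By the conformal transformation law already verified in the proof of Proposition~\ref{prop:WY-scalar-flat-reduction},
\[
R_{v^4\bar g}=v^{-5}\left(-8\Delta_{\bar g}v+R_{\bar g}v\right),
\]
so $R_{g_\lambda}=0$. For the boundary metric, $h^T|_{\Sigma_0}=0$ gives $(\delta+\lambda h)|_{T\Sigma_0}=\gamma_0$, and $v_\lambda=1$ on $\Sigma_0$ then gives $g_\lambda|_{T\Sigma_0}=\gamma_0$. Finally $w_0=0$, so $g_0=\delta$.

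For the quadratic estimate \eqref{eq:v-lambda-quadratic} we differentiate $\mathcal F(\lambda,w_\lambda)=0$ at $\lambda=0$. Because $w_0=0$, the terms coming from differentiating $\Delta_{\delta+\lambda h}$ act on $w_0$ and drop out, which leaves
\[
-8\Delta_\delta\dot w_0+DR_\delta(h)=0,
\qquad
\dot w_0|_{\Sigma_0}=0.
\]
By \eqref{eq:outline-linear-scalar}, $DR_\delta(h)=-\Delta_\delta\operatorname{tr}_\delta h+\partial^i\partial^jh_{ij}$. Both terms vanish because $h$ is trace-free and divergence-free. Hence $\dot w_0$ is harmonic with zero boundary values, so $\dot w_0=0$. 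Since $\lambda\mapsto w_\lambda$ is $C^2$ into $C^{k,\alpha}$, Taylor's theorem then gives
\[
\|w_\lambda\|_{C^{k,\alpha}}\le \tfrac12\lambda^2\sup_{|s|\le|\lambda|}\|\ddot w_s\|_{C^{k,\alpha}}=O(\lambda^2).
\]

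The main obstacle is regularity bookkeeping. With $h\in C^{k,\alpha}$, the scalar curvature $R_{\delta+\lambda h}$ lies only in $C^{k-2,\alpha}$, so the target space must be $C^{k-2,\alpha}$ and the solution lands in $C^{k,\alpha}$. Consequently $g_\lambda$ is $C^{k,\alpha}$ rather than smooth, and ``smooth family'' should be read as smooth dependence on $\lambda$. We also need $k\ge2$ so that $C^{k-2,\alpha}$ makes sense and Schauder theory applies. The boundary trace conditions require no additional argument.
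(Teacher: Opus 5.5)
Your proposal is correct and follows essentially the same route as the paper. Both apply the implicit function theorem to $\mathcal F(\lambda,w)$ from $C^{k,\alpha}_0(\Omega_0)$ to $C^{k-2,\alpha}(\Omega_0)$ with linearization $-8\Delta_\delta$, use the conformal law for $R_{g_\lambda}=0$, combine $h^T|_{\Sigma_0}=0$ with $v_\lambda|_{\Sigma_0}=1$ for the boundary metric, and use $DR_\delta(h)=0$ for TT $h$ to force $\dot v_0=0$ and hence $v_\lambda=1+O(\lambda^2)$ by Taylor's theorem. You also note that $g_\lambda$ is only $C^{k,\alpha}$, so ``smooth family'' must mean smooth dependence on $\lambda$; this is a correct clarification that the paper leaves implicit.
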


\begin{proof}
For $|\lambda|$ sufficiently small, set
\begin{equation}
\label{eq:gbar-lambda}
\bar g_\lambda
:=
\delta+\lambda h.
\end{equation}
Since $\delta$ is positive definite and
\[
\|\lambda h\|_{C^{k,\alpha}(\Omega_0)}
\longrightarrow0
\qquad
\text{as }\lambda\longrightarrow0,
\]
there exists $\lambda_0>0$ such that $\bar g_\lambda$ is
Riemannian whenever $|\lambda|<\lambda_0$.
\noindent Let
\begin{equation}
\label{eq:Cka-zero}
C^{k,\alpha}_0(\Omega_0)
:=
\left\{
w\in C^{k,\alpha}(\Omega_0):
w|_{\Sigma_0}=0
\right\},
\end{equation}
and define
\begin{equation}
\label{eq:F-map-definition}
\mathcal F:
(-\lambda_0,\lambda_0)
\times
C^{k,\alpha}_0(\Omega_0)
\longrightarrow
C^{k-2,\alpha}(\Omega_0)
\end{equation}
by
\begin{equation}
\label{eq:F-lambda-w}
\mathcal F(\lambda,w)
:=
-8\Delta_{\delta+\lambda h}(1+w)
+
R_{\delta+\lambda h}(1+w).
\end{equation}
The dependence of the coefficients of
$\Delta_{\delta+\lambda h}$ and $R_{\delta+\lambda h}$ on
$\lambda$ is smooth in the indicated Hölder spaces. Hence
$\mathcal F$ is smooth in a neighborhood of $(0,0)$. Now since $R_\delta=0$,
\begin{equation}
\label{eq:F-zero}
\mathcal F(0,0)=0.
\end{equation}
For every
$\phi\in C^{k,\alpha}_0(\Omega_0)$,
\begin{equation}
\label{eq:DwF}
D_w\mathcal F(0,0)[\phi]
=
-8\Delta_\delta\phi.
\end{equation}
The Dirichlet operator
\begin{equation}
\label{eq:Dirichlet-isomorphism}
-8\Delta_\delta:
C^{k,\alpha}_0(\Omega_0)
\longrightarrow
C^{k-2,\alpha}(\Omega_0)
\end{equation}
is an isomorphism. Therefore the implicit function theorem gives
$\varepsilon>0$ and a unique smooth curve
\begin{equation}
\label{eq:w-lambda-curve}
(-\varepsilon,\varepsilon)
\ni\lambda
\longmapsto
w_\lambda\in C^{k,\alpha}_0(\Omega_0),
\qquad
w_0=0,
\end{equation}
such that
\begin{equation}
\label{eq:F-lambda-wlambda-zero}
\mathcal F(\lambda,w_\lambda)=0.
\end{equation}
Set $v_\lambda:=1+w_\lambda$.
After decreasing $\varepsilon$ if necessary,
\[
\|w_\lambda\|_{C^0(\Omega_0)}<\frac12,
\]
and hence
\[
v_\lambda>\frac12>0.
\]
Thus $v_\lambda$ is the unique positive solution sufficiently
close to $1$ of
\[
-8\Delta_{\delta+\lambda h}v_\lambda
+
R_{\delta+\lambda h}v_\lambda
=
0,
\qquad
v_\lambda|_{\Sigma_0}=1.
\]
Now define
\[
g_\lambda
=
v_\lambda^4(\delta+\lambda h).
\]
In dimension three the conformal scalar-curvature transformation law is
\begin{equation}
\label{eq:conformal-scalar-law}
R_{u^4g}
=
u^{-5}
\left(
-8\Delta_gu+R_gu
\right).
\end{equation}
Applying \eqref{eq:conformal-scalar-law} with
\[
g=\delta+\lambda h,
\qquad
u=v_\lambda,
\]
and using \eqref{eq:vlambda-equation}, we obtain
\begin{equation}
\label{eq:Rglambda-zero}
R_{g_\lambda}=0.
\end{equation}
We next verify that the induced boundary metric is independent of
$\lambda$. If $Y,Z\in T\Sigma_0$, then
\[
\begin{aligned}
g_\lambda(Y,Z)
&=
v_\lambda^4
\left(
\delta(Y,Z)+\lambda h(Y,Z)
\right)\\
&=
\delta(Y,Z),
\end{aligned}
\]
because
\[
v_\lambda|_{\Sigma_0}=1,
\qquad
h^T|_{\Sigma_0}=0.
\]
Therefore
\begin{equation}
\label{eq:boundary-metric-fixed}
g_\lambda|_{T\Sigma_0}
=
\gamma_0.
\end{equation}
At $\lambda=0$, uniqueness gives
$v_0\equiv1$, and hence
\[
g_0=\delta.
\]
It remains to determine the first-order behavior of $v_\lambda$.
Differentiating
\eqref{eq:vlambda-equation}
with respect to $\lambda$ at $\lambda=0$ gives
\begin{equation}
\label{eq:vdot-equation-pre}
-8\Delta_\delta\dot v_0
+
DR_\delta(h)
=
0,
\qquad
\dot v_0|_{\Sigma_0}=0.
\end{equation}
There is no contribution from the variation of the Laplacian acting
on $v_0$, since
\[
v_0\equiv1
\qquad\Longrightarrow\qquad
D\Delta_\delta(h)(1)=0.
\]
The linearization of scalar curvature at the Euclidean metric is
\begin{equation}
\label{eq:DR-delta-h-again}
DR_\delta(h)
=
-\Delta_\delta
\left(
\operatorname{tr}_\delta h
\right)
+
\partial^i\partial^jh_{ij}.
\end{equation}
By the TT conditions,
\[
\operatorname{tr}_\delta h=0,
\qquad
\partial^jh_{ij}=0,
\]
and therefore
\begin{equation}
\label{eq:DR-TT-zero}
DR_\delta(h)=0.
\end{equation}
Thus
\begin{equation}
\label{eq:vdot-harmonic}
\Delta_\delta\dot v_0=0,
\qquad
\dot v_0|_{\Sigma_0}=0.
\end{equation}
Uniqueness for the Dirichlet Laplacian yields
\[
\dot v_0=0.
\]
Since
$\lambda\longmapsto v_\lambda$
is smooth as a map into $C^{k,\alpha}(\Omega_0)$, fundamental theorem of calculus or Taylor's theorem
gives
\[
v_\lambda
=
1+
O_{C^{k,\alpha}}(\lambda^2).
\]
More precisely, after decreasing $\varepsilon$ if necessary, there is
a constant $C_h>0$ such that
\begin{equation}
\label{eq:v-lambda-quadratic-estimate}
\|v_\lambda-1\|_{C^{k,\alpha}(\Omega_0)}
\leq
C_h\lambda^2
\end{equation}
for $|\lambda|<\varepsilon$.
Finally,
\[
v_\lambda^4
=
1+
O_{C^{k,\alpha}}(\lambda^2),
\]
and hence
\[
\begin{aligned}
g_\lambda
&=
v_\lambda^4(\delta+\lambda h)\\
&=
\delta+\lambda h
+
O_{C^{k,\alpha}}(\lambda^2).
\end{aligned}
\]
This completes the proof.
\end{proof}
\noindent Now we identify the scalar-flat Jang metric with the TT-generated family in the previous proposition \ref{}. The main point is that the Wang-Yau definition of the quasi-local mass involves a particular minimizing choice of the function $\tau$ that is essentially obtained by solving a fourth-order equation. First, recall the following definition 
\[
{\mathcal T}^{k,\alpha}
:=
\left\{
h\in C^{k,\alpha}
(\Omega_0;S^2T^*\Omega_0): ||h||_{C^{k,\alpha}}=1,
\operatorname{div}_{\delta}h=0,\;
\operatorname{tr}_{\delta}h=0,\;
h^T|_{\Sigma_0}=0
\right\},
\]
\begin{definition}[TT-generated Jang-reduced data]
\label{physical}
Let $(M,g,K)$ be a smooth physical initial data set satisfying the
Einstein constraint equations and the dominant energy condition, and let
$\Sigma=\partial\Omega$. 
Let
$(\widehat\Omega_*,\widehat g_*)$
be the Jang fill-in associated with an admissible $\tau$, and let $U_*>0$ be the
solution of
\[
-8\Delta_{\widehat g_*}U_*
+R_{\widehat g_*}U_*=0
\quad\hbox{in }\widehat\Omega_*,
\qquad
U_*=1
\quad\hbox{on }\partial\widehat\Omega_*.
\]
Set $g_{{\rm sf},*}:=U_*^4\widehat g_*$.
Let $(\Omega_0,\delta)\subset{\mathbb R}^3$ be the strictly convex
Euclidean fill-in determined by the projected reference embedding
corresponding to $\tau$.  We say that the pair
$\bigl((M,g,K),\tau\bigr)$
belongs to the TT-generated Jang-reduced class if there exists
a diffeomorphism
\[
\Phi:\Omega_0\longrightarrow\widehat\Omega_*,
\qquad
\Phi|_{\Sigma_0}=\mathrm{Id},
\]
and a tensor $0\neq h\in {\mathcal T}^{k,\alpha}$, $\mathbb{R}^{+}\ni|\lambda|\ll 1$,~$k\geq 2,\alpha\in(0,1)$
such that
\[
\Phi^*g_{{\rm sf},*}=v_{\lambda,h}^4(\delta+\lambda h).
\]
where $v_{\lambda,h}>0$ is the unique solution close to $1$ of
\[
-8\Delta_{\delta+\lambda h}v_{\lambda,h}+R_{\delta+\lambda h}v_{\lambda,h}=0,
\qquad
v_{\lambda,h}|_{\Sigma_0}=1.
\]
\end{definition}

\begin{proposition}[Physical realization of the TT-generated class]
\label{prop:physical-realization-TT}
Let $(\Omega_0,\delta)\subset\mathbb R^3$ be a smooth bounded domain
with strictly convex boundary
$\Sigma_0:=\partial\Omega_0,
\gamma_0:=\delta|_{T\Sigma_0}$,
and let
$0\neq h\in \mathcal{T}^{k,\alpha}
(\Omega_0;S^2T^*\Omega_0),
k\geq2,
0<\alpha<1$.
Let
\[
g_{{\rm sf},\lambda}
:=
G(\lambda h)
=
v_\lambda^4(\delta+\lambda h)
\]
be the scalar-flat family constructed in
Proposition~\ref{prop:TT-scalar-flat-completion}. Then, after decreasing $\varepsilon>0$ if necessary, there exists
$a_0>0$ such that for every
$0<|\lambda|<\varepsilon,
\frac{a_0}{2}<a<\frac{3a_0}{2}$,
there is an open neighborhood of $(0,0)$ in the variables $(f,S)$
for which one obtains smooth physical initial data $(\Omega_0,g_{\lambda,a,f},K_{\lambda,a,f,S})$
satisfying the Einstein constraint equations and the strict dominant
energy condition. Moreover, the function $f$ is an exact solution of
Jang's equation for these data, and the subsequent conformal
scalar-flat reduction is precisely $g_{{\rm sf},\lambda}$.
\end{proposition}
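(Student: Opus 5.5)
We construct the data by reverse engineering: fix the Jang graph first, then produce the physical metric and second fundamental form so that the graph solves Jang's equation. Fix $\lambda$ with $0<|\lambda|<\varepsilon$ and let $g_{{\rm sf},\lambda}=v_\lambda^4(\delta+\lambda h)$ be as in Proposition~\ref{prop:TT-scalar-flat-completion}.

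\emph{Step 1: the candidate Jang metric.} We take the Jang metric to be a conformal deformation
\[
\widehat g:=u_a^{-4}g_{{\rm sf},\lambda},\qquad u_a|_{\Sigma_0}=1,
\]
where $u_a=1+a\psi$, $\psi\in C^\infty_c(\Omega_0)$ is a fixed nonnegative bump, and $a$ lies near a value $a_0>0$. Then $R_{\widehat g}=u_a^{5}\,(8\Delta_{g_{\rm sf}}u_a^{-1})$, and $U:=u_a$ solves $-8\Delta_{\widehat g}U+R_{\widehat g}U=0$ with $U=1$ on $\Sigma_0$. Indeed $U^4\widehat g=g_{{\rm sf},\lambda}$ is scalar-flat, and the Dirichlet solution is unique by the coercivity argument of Proposition~\ref{prop:WY-scalar-flat-reduction}, which still has to be checked here via \eqref{eq:WY-Jang-R}. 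Consequently, once $\widehat g$ is exhibited as the Jang metric of genuine data, its scalar-flat reduction is exactly $g_{{\rm sf},\lambda}$.

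\emph{Step 2: building the data.} Given a small function $f$ with $f|_{\Sigma_0}=\tau$, set
\[
g_{\lambda,a,f}:=\widehat g-df\otimes df,
\]
which is Riemannian for $f$ small in $C^1$. This makes the graph of $f$ in $(\Omega_0\times\mathbb R,g+dt^2)$ have induced metric $\widehat g$. Next choose
\[
K=\frac{\nabla^2_gf}{\sqrt{1+|\nabla f|_g^2}}+S,
\]
with $S$ a small symmetric tensor satisfying $\operatorname{tr}_{\widehat g}S=0$. Then $\operatorname{tr}_{\widehat g}(\widehat A-K)=0$, so $f$ is an \emph{exact} Jang solution by construction. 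The variables $(f,S)$ range over a small neighborhood of $(0,0)$ in these spaces.

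\emph{Step 3: the energy condition.} Define $\mu$ and $J$ by the constraint equations \eqref{eq:hamiltonian-constraint}--\eqref{eq:momentum-constraint}; the Einstein constraints then hold tautologically. Strict dominant energy $\mu>|J|$ is the real content and the main obstacle, because $R_{g_{\rm sf}}=0$ leaves no margin in the unperturbed state. This is where $a$ is used. At $f=0$, $S=0$ we have $K=0$ and $g=\widehat g$, so $2\mu=R_{\widehat g}$ and $J=0$. We therefore want $R_{\widehat g}>0$ uniformly, which requires $\Delta_{g_{\rm sf}}u_a^{-1}>0$.

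A compactly supported bump cannot achieve this everywhere, since $u_a^{-1}=1$ on the boundary. To fix this, let $u_a^{-1}$ solve $\Delta_{g_{\rm sf}}u_a^{-1}=a$ with boundary value $1$; equivalently, take $u_a^{-1}=1+a w$ with $\Delta_{g_{\rm sf}}w=1$ and $w|_{\Sigma_0}=0$. Then $w<0$, so $u_a>1$, and
\[
R_{\widehat g}=8a\,u_a^{5}>0 .
\]
Choosing $a_0$ small keeps $\widehat g$ close to $g_{{\rm sf},\lambda}$.

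With the data of Step 2, $\mu$ and $J$ depend continuously, in $C^0$, on $(f,S)$ for $f\in C^{3,\alpha}$. Since $\mu\geq 4a_0u^5>0$ and $J=0$ at $(0,0)$, strict inequality persists on an open neighborhood, uniformly for $a\in(a_0/2,3a_0/2)$ by compactness.

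It remains to check that $\tau$ is admissible. Here we take the boundary data to be the $\tau$ defining the projected embedding of $(\Sigma_0,\gamma_0)$: $\tau=0$, or small. For small $(f,S)$, the convexity condition and $h>0$ then follow by perturbation from the Euclidean convex boundary.

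Finally, smoothness: $h$ is only $C^{k,\alpha}$, so the data are $C^{k-2,\alpha}$ in general; to get smooth data we would take $h$ smooth, for instance $h\in\mathcal T^\infty_c$ from Lemma~\ref{lem:compact-TT}, and note this restriction.
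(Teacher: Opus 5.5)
\textbf{Verdict: genuine gap (a sign error that breaks the dominant energy condition); otherwise your route is the paper's.}

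Your construction follows the paper step for step:
\begin{itemize}
\item deform $g_{{\rm sf},\lambda}$ conformally by a factor equal to $1$ on $\Sigma_0$;
\item set $g=\widehat g-df\otimes df$ and $K=A_f+S$ with $\operatorname{tr}_{\widehat g}S=0$, so that $f$ solves Jang's equation exactly;
\item recover $g_{{\rm sf},\lambda}$ by taking $U=u_a$;
\item obtain the strict dominant energy condition by continuity from $(f,S)=(0,0)$, where $2\mu=R_{\widehat g}$ and $J=0$.
\end{itemize}
However, the step you yourself call the real content fails as written, because of a sign error.

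Take $\Delta=\operatorname{div}\nabla$. This is the convention behind your Step~1 equation $-8\Delta_{\widehat g}U+R_{\widehat g}U=0$, and the one under which $\Delta w=1$, $w|_{\Sigma_0}=0$ forces $w<0$. For $\widehat g=(u_a^{-1})^4g_{{\rm sf},\lambda}$ the transformation law then reads
\[
R_{\widehat g}=u_a^{5}\bigl(-8\Delta_{g_{\rm sf}}u_a^{-1}+R_{g_{\rm sf}}u_a^{-1}\bigr)=-8\,u_a^{5}\,\Delta_{g_{\rm sf}}u_a^{-1},
\]
not $+8u_a^5\Delta_{g_{\rm sf}}u_a^{-1}$. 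So positivity requires $u_a^{-1}$ to be superharmonic. Your choice $u_a^{-1}=1+aw$ with $\Delta_{g_{\rm sf}}w=1$ is subharmonic and gives $R_{\widehat g}=-8a\,u_a^{5}<0$.

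A concrete check: on the unit ball with $\lambda=0$, $w=(r^2-1)/6$ and $\widehat g=\bigl(1-a(1-r^2)/6\bigr)^4\delta$. Its conformal factor increases toward the boundary and its scalar curvature is negative.

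The continuity argument then propagates the violation rather than the inequality. At $f=0$ you have $g=\widehat g$ and $K=S$, so $2\mu=R_{\widehat g}-|S|^2_{\widehat g}<0$ for every admissible $S$. If you had meant the positive Laplacian instead, your formula would be right, but then $w>0$ and $u_a<1$, contrary to what you wrote. Either way the argument is internally inconsistent.

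\textbf{The repair} is to flip the sign. Take $w=\phi_\lambda>0$ with $-\Delta_{g_{{\rm sf},\lambda}}\phi_\lambda=1$ and $\phi_\lambda|_{\Sigma_0}=0$. Then $u_a^{-1}=1+a\phi_\lambda>1$ and $R_{\widehat g}=8a(1+a\phi_\lambda)^{-5}>0$. This is exactly the paper's $w_{\lambda,a}$, with $U_{\lambda,a}=w_{\lambda,a}^{-1}<1$.

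With this change the rest of your argument goes through and coincides with the paper's. Two simplifications also follow.
\begin{itemize}
\item Since $\widehat g$ does not depend on $(f,S)$ and $R_{\widehat g}>0$, uniqueness for the $U$-problem follows directly from coercivity of $-8\Delta+R_{\widehat g}$. You do not need \eqref{eq:WY-Jang-R}.
\item The convexity part of admissibility holds exactly, not just perturbatively, and you need not take $\tau=0$. Indeed $\sigma+d\tau\otimes d\tau=\widehat g|_{T\Sigma_0}=\gamma_0$ for every $f$.
\end{itemize}

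Your remark that $h\in C^{k,\alpha}$ only yields finitely differentiable data, so that ``smooth'' requires a smooth seed $h$, is a fair point that the paper passes over.
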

\begin{remark}
The main point of the proposition is to explicitly prove the existence of physical data in the $ TT$- generated class that verify the dominant energy condition as stated in the definition \ref{physical}. We can make it more explicit here. More explicitly, let $\phi_\lambda$ be the solution of
\begin{equation}
\label{eq:phi-inverse-construction}
\left\{
\begin{aligned}
-\Delta_{g_{{\rm sf},\lambda}}\phi_\lambda
&=1
&&\text{in }\Omega_0,\\
\phi_\lambda&=0
&&\text{on }\Sigma_0,
\end{aligned}
\right.
\end{equation}
and set
$w_{\lambda,a}
:=
1+a\phi_\lambda,
\widehat g_{\lambda,a}
:=
w_{\lambda,a}^4g_{{\rm sf},\lambda}$.
For $f\in C^{k+1,\alpha}(\Omega_0)$ sufficiently small $df$ in $C^{k,\alpha}$, define
\begin{equation}
\label{eq:physical-g-inverse-construction}
g_{\lambda,a,f}
:=
\widehat g_{\lambda,a}
-
df\otimes df.
\end{equation}
Let $A_f$ denote the second fundamental form of the graph of $f$ in
$(\Omega_0\times\mathbb R,
g_{\lambda,a,f}+dt^2)$,
and let
$S\in C^{k-1,\alpha}
(\Omega_0;S^2T^*\Omega_0)$
be sufficiently small and satisfy
\begin{equation}
\label{eq:S-hat-tracefree}
\operatorname{tr}_{\widehat g_{\lambda,a}}S=0.
\end{equation}
Define
\begin{equation}
\label{eq:physical-K-inverse-construction}
K_{\lambda,a,f,S}
:=
A_f+S.
\end{equation}
Then $f$ satisfies Jang's equation with boundary value
\[
\tau:=f|_{\Sigma_0},
\]
the induced metric on its Jang graph is exactly
$\widehat g_{\lambda,a}$, and
\begin{equation}
\label{eq:exact-return-TT}
U_{\lambda,a}^4\widehat g_{\lambda,a}
=
g_{{\rm sf},\lambda},
\qquad
U_{\lambda,a}:=w_{\lambda,a}^{-1},
\end{equation}
where $U_{\lambda,a}$ is the unique positive solution of
\begin{equation}
\label{eq:U-inverse-construction}
-8\Delta_{\widehat g_{\lambda,a}}U_{\lambda,a}
+
R_{\widehat g_{\lambda,a}}U_{\lambda,a}
=
0,
\qquad
U_{\lambda,a}|_{\Sigma_0}=1.
\end{equation}
Consequently, these initial data belong to the TT-generated
Jang-reduced class of Definition~\ref{physical}.
\end{remark}

\begin{proof}
Since $g_{{\rm sf},\lambda}$ is scalar flat, the maximum principle
applied to \eqref{eq:phi-inverse-construction} gives
\[
\phi_\lambda>0
\qquad\text{in }\Omega_0.
\]
For $a>0$, the conformal transformation law yields since $R_{g_{sf,\lambda}}=0$
\begin{align}
R_{\widehat g_{\lambda,a}}
&=
w_{\lambda,a}^{-5}
\left(
-8\Delta_{g_{{\rm sf},\lambda}}w_{\lambda,a}
+
R_{g_{{\rm sf},\lambda}}w_{\lambda,a}
\right)
\nonumber=
8a\,w_{\lambda,a}^{-5}
>0.
\label{eq:positive-R-hat-construction}
\end{align}
Since $\phi_\lambda=0$ on $\Sigma_0$,
\[
w_{\lambda,a}|_{\Sigma_0}=1,
\qquad
\widehat g_{\lambda,a}|_{T\Sigma_0}
=
\gamma_0.
\]
For $df$ sufficiently small in $C^{k,\alpha}$ one has
\[
|df|_{\widehat g_{\lambda,a}}<1,
\]
and therefore
\[
g_{\lambda,a,f}
=
\widehat g_{\lambda,a}-df\otimes df
\]
is positive definite. By construction,
\begin{equation}
\label{eq:graph-metric-exact}
g_{\lambda,a,f}
+
df\otimes df
=
\widehat g_{\lambda,a}.
\end{equation}
Hence the metric induced on the graph of $f$ in
$(\Omega_0\times\mathbb R,g_{\lambda,a,f}+dt^2)$ is precisely
$\widehat g_{\lambda,a}$. Now let $A_f$ be the second fundamental form of this graph. From
\eqref{eq:physical-K-inverse-construction} and
\eqref{eq:S-hat-tracefree},
\[
\operatorname{tr}_{\widehat g_{\lambda,a}}
(A_f-K_{\lambda,a,f,S})
=
-\operatorname{tr}_{\widehat g_{\lambda,a}}S
=
0.
\]
Thus $f$ solves Jang's equation exactly. Next, we set
\[
U_{\lambda,a}=w_{\lambda,a}^{-1}.
\]
Since $w_{\lambda,a}=1$ on $\Sigma_0$,
\[
U_{\lambda,a}|_{\Sigma_0}=1,
\]
while
\[
U_{\lambda,a}^4\widehat g_{\lambda,a}
=
w_{\lambda,a}^{-4}
w_{\lambda,a}^4
g_{{\rm sf},\lambda}
=
g_{{\rm sf},\lambda}.
\]
Because the metric on the right is scalar flat, conformal covariance
of the scalar-curvature operator gives
\[
-8\Delta_{\widehat g_{\lambda,a}}U_{\lambda,a}
+
R_{\widehat g_{\lambda,a}}U_{\lambda,a}
=
0.
\]
Hence the Jang--conformal reduction of the constructed physical data
returns exactly
$G(\lambda h)$. It remains to verify the constraints and the dominant energy
condition. Define
\begin{align}
2\mu_{\lambda,a,f,S}
&:=
R_{g_{\lambda,a,f}}
-
|K_{\lambda,a,f,S}|_{g_{\lambda,a,f}}^2
+
\left(
\operatorname{tr}_{g_{\lambda,a,f}}
K_{\lambda,a,f,S}
\right)^2,
\label{eq:mu-constructed}
\\
J_{\lambda,a,f,S}
&:=
\operatorname{div}_{g_{\lambda,a,f}}
K_{\lambda,a,f,S}
-
d\left(
\operatorname{tr}_{g_{\lambda,a,f}}
K_{\lambda,a,f,S}
\right).
\label{eq:J-constructed}
\end{align}
The Einstein constraint equations then hold identically. At
\[
f=0,
\qquad
S=0,
\]
one has
\[
g_{\lambda,a,0}
=
\widehat g_{\lambda,a},
\qquad
K_{\lambda,a,0,0}=0,
\]
and hence, we have from (\ref{eq:mu-constructed})-(\ref{eq:J-constructed})
\begin{equation}
\label{eq:strict-DEC-base}
\mu_{\lambda,a,0,0}
=
\frac12R_{\widehat g_{\lambda,a}}
=
4a\,w_{\lambda,a}^{-5}
>0,
\qquad
J_{\lambda,a,0,0}=0.
\end{equation}
After fixing $a_0>0$ and decreasing $\varepsilon$ and $a_0$ if
necessary, the quantity on the right-hand side of
\eqref{eq:strict-DEC-base} has a uniform positive lower bound for
\[
|\lambda|<\varepsilon,
\qquad
\frac{a_0}{2}<a<\frac{3a_0}{2}.
\]
The constraint map
\[
(g,K)
\longmapsto
\left(
\frac12
\left[
R_g-|K|_g^2+(\operatorname{tr}_gK)^2
\right],
\,
\operatorname{div}_gK-d(\operatorname{tr}_gK)
\right)
\]
is continuous as a map $
C^{k,\alpha}\times C^{k-1,\alpha}\to C^{k-2,\alpha}\times C^{k-2,\alpha}$.
Therefore, for $df$ and $S$ sufficiently small in their respective strong enough topology,
\begin{equation}
\label{eq:strict-DEC-open}
\mu_{\lambda,a,f,S}
>
|J_{\lambda,a,f,S}|_{g_{\lambda,a,f}}
\qquad
\text{on }\Omega_0.
\end{equation}
Thus the constructed data satisfy the strict dominant energy
condition. Finally, if $\tau=f|_{\Sigma_0}$ and
\[
\sigma
:=
g_{\lambda,a,f}|_{T\Sigma_0},
\]
then
\begin{equation}
\label{eq:projected-boundary-exact}
\sigma+d\tau\otimes d\tau
=
\widehat g_{\lambda,a}|_{T\Sigma_0}
=
\gamma_0.
\end{equation}
Hence the projected Wang--Yau boundary metric is exactly the fixed
strictly convex Euclidean metric. At $(f,S)=(0,0)$ the generalized
boundary mean curvature is positive for $a_0$ sufficiently small;
this remains true after shrinking the above neighborhood, again by
continuity. Thus $\tau$ is admissible. The admissible parameter set for sufficiently small $\eta$
\[
\left\{
(\lambda,a,f,S):
0<|\lambda|<\varepsilon,\;
\frac{a_0}{2}<a<\frac{3a_0}{2},\;
\|df\|_{C^{k,\alpha}}<\eta,\;
\|S\|_{C^{k-1,\alpha}}<\eta,\;
\operatorname{tr}_{\widehat g_{\lambda,a}}S=0
\right\}
\]
is open in the corresponding parameter space, and every member gives
genuine constraint-satisfying initial data with strict dominant
energy condition whose Jang-conformal reduction is
$G(\lambda h)$. This completes the proof and the construction of the physical data.
\end{proof}

\noindent Now we move on with the computation of the variations of the Brown-York mass for the conformal Jang data. Since the boundary metric is independent of $\lambda$, its Euclidean
isometric embedding is independent of $\lambda$, up to Euclidean
rigid motions.  Hence the reference mean curvature $H_0$ and the area
form $dA_{\gamma_0}$ are fixed.  If $H_\lambda$ denotes the mean
curvature of
\[
\Sigma_0\subset(\Omega_0,g_\lambda)
\]
with respect to the outward $g_\lambda$--unit normal, then
\begin{equation}
\label{eq:BY-glambda}
m_{\rm BY}(g_\lambda)
=
\frac{1}{8\pi}
\int_{\Sigma_0}
\left(
H_0-H_\lambda
\right)dA_{\gamma_0}.
\end{equation}
The computation of the Hessian of
$m_{\rm BY}(g_\lambda)$ at $\lambda=0$ is carried out below. For later use, define the tangential vector field
$Z_h\in\Gamma(T\Sigma_0)$ by
\begin{equation}
\label{eq:Z-h}
        \gamma_0(Z_h,Y)
        =
        h(\nu_0,Y),
        \qquad
        Y\in T\Sigma_0.
\end{equation}
In the next lemma, we obtain the first variation formula for the mean curvature under the scalar flat constraint.
\begin{lemma}
\label{lem:BY-first-variation-scalar-flat}

Let $I\subset\mathbb R$ be an open interval containing $0$, and let
\[
\lambda\longmapsto g_\lambda
\]
be a $C^2$ family of Riemannian metrics on $\Omega_0$ such that, for every
$\lambda\in I$,
\[
R_{g_\lambda}=0,
\qquad
g_\lambda|_{T\Sigma_0}=\gamma_0.
\]
Let
\[
q_\lambda:=\partial_\lambda g_\lambda,
\]
and let $H_\lambda$ denote the mean curvature of
$\Sigma_0\subset(\Omega_0,g_\lambda)$ with respect to the outward
$g_\lambda$--unit normal. Then
\begin{equation}
\label{eq:total-H-first-variation}
2\frac{d}{d\lambda}
\int_{\Sigma_0}H_\lambda\,dA_{\gamma_0}
=
-
\int_{\Omega_0}
\left\langle
q_\lambda,\text{Ric}(g_\lambda)
\right\rangle_{g_\lambda}
\,dV_{g_\lambda}.
\end{equation}
\end{lemma}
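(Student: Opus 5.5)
The plan is to differentiate the identity $\int_{\Omega_0}R_{g_\lambda}\,dV_{g_\lambda}\equiv0$, which holds because each $g_\lambda$ is scalar flat. This is the standard first variation of the Einstein--Hilbert action with its Gibbons--Hawking boundary term, specialised to variations that fix the boundary metric.

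For a variation $q=q_\lambda$ at a fixed $\lambda$, the linearisation of scalar curvature is
\[
DR_{g}(q)=-\Delta_g(\operatorname{tr}_gq)+\operatorname{div}_g\operatorname{div}_gq-\langle q,\operatorname{Ric}(g)\rangle_g .
\]
Since $R_{g_\lambda}=0$ for every $\lambda$, we have $DR_{g_\lambda}(q_\lambda)=\partial_\lambda R_{g_\lambda}=0$ pointwise. The volume variation contributes $\tfrac12R\operatorname{tr}q=0$. Integrating over $\Omega_0$ and applying the divergence theorem, with $\nu=\nu_\lambda$ the outward $g_\lambda$-unit normal, gives
\[
0=\int_{\Sigma_0}\Bigl((\operatorname{div}_gq)(\nu)-\nu(\operatorname{tr}_gq)\Bigr)dA_{\gamma_0}
-\int_{\Omega_0}\langle q,\operatorname{Ric}(g)\rangle_g\,dV_g .
\]
The boundary area form is $dA_{\gamma_0}$ for every $\lambda$ because $g_\lambda|_{T\Sigma_0}=\gamma_0$.

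It therefore suffices to prove the boundary identity
\[
2\,\partial_\lambda H_\lambda=\nu(\operatorname{tr}_gq)-(\operatorname{div}_gq)(\nu)-\operatorname{div}_{\gamma_0}\omega,\qquad \omega:=q(\nu,\cdot)|_{T\Sigma_0},
\]
valid whenever $q^T:=q|_{T\Sigma_0\times T\Sigma_0}=0$. That hypothesis holds here because $\partial_\lambda(g_\lambda|_{T\Sigma_0})=0$. Given the identity, integrating over the closed surface $\Sigma_0$ kills $\operatorname{div}_{\gamma_0}\omega$. Substituting into the integrated relation above then yields
\[
2\frac{d}{d\lambda}\int_{\Sigma_0}H_\lambda\,dA_{\gamma_0}=-\int_{\Omega_0}\langle q_\lambda,\operatorname{Ric}(g_\lambda)\rangle\,dV_{g_\lambda},
\]
which is the claim.

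The main obstacle is the boundary identity, which is where all sign and normalisation conventions must be checked against the paper's $A(Y,Z)=g(\nabla_Y\nu,Z)$. I would prove it directly in a frame adapted to $\Sigma_0$ at a point, in four steps.

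\begin{enumerate}
\item Differentiate the unit normal: $\nu'=-\tfrac12 q(\nu,\nu)\nu-\omega^\sharp$, using $q^T=0$.
\item Use the variation of the connection, $g(\nabla'_YZ,W)=\tfrac12\bigl[(\nabla_Yq)(Z,W)+(\nabla_Zq)(Y,W)-(\nabla_Wq)(Y,Z)\bigr]$, to compute $A'(e_a,e_a)=\partial_\lambda\,g(\nabla_{e_a}\nu,e_a)$.
\item Trace over $a$. Since the inverse boundary metric is fixed, $H'=\sum_aA'(e_a,e_a)$.
\item Rewrite the normal derivatives of $q$ via $\operatorname{tr}_gq=q(\nu,\nu)$ on $\Sigma_0$ and $(\operatorname{div}_gq)(\nu)=(\nabla_\nu q)(\nu,\nu)+\sum_a(\nabla_{e_a}q)(e_a,\nu)$.
\end{enumerate}

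The tangential derivatives $\sum_a(\nabla_{e_a}q)(e_a,\nu)$ should assemble into $\operatorname{div}_{\gamma_0}\omega$ plus curvature terms $A(\omega,\cdot)$ and $H\,q(\nu,\nu)$, which must cancel against the contributions of $\nu'$. If a residual multiple of $H\,q(\nu,\nu)$ or $A(\omega,\omega)$-type term survives, I would recheck it against the known formula for $DH$ (e.g.\ as in Miao--Tam). In that formula, all non-divergence boundary terms are paired with $q^T$ and hence vanish here.
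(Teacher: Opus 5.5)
Your proposal is correct and follows the paper's proof essentially step for step. It uses the same ingredients: the normal variation $\nu'=-\omega^\sharp-\tfrac12q(\nu,\nu)\nu$, the Koszul-type formula for $\partial_\lambda\nabla$, the pointwise identity $2\dot H=(d\operatorname{tr}_gq-\operatorname{div}_gq)(\nu)-\operatorname{div}_{\gamma_0}\omega$, and integration of $DR_g(q)=0$ over $\Omega_0$.

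Two small points on step 4. First, you need the derivative expansion $\nu(\operatorname{tr}_gq)=(\nabla_\nu q)(\nu,\nu)+\sum_a(\nabla_\nu q)(e_a,e_a)$, not just the boundary value $\operatorname{tr}_gq=q(\nu,\nu)$, since $\sum_a(\nabla_\nu q)(e_a,e_a)$ need not vanish. Second, the tangential term comes out exactly as $\sum_a(\nabla_{e_a}q)(e_a,\nu)=\operatorname{div}_{\gamma_0}\omega+H\,q(\nu,\nu)$, with no $A(\omega,\cdot)$ contribution.
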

\begin{proof}
Fix $\lambda\in I$ and, for the duration of the computation, suppress
the subscript $\lambda$.  Thus
\[
g=g_\lambda,
\qquad
q=\partial_\lambda g_\lambda,
\qquad
\nu=\nu_\lambda,
\qquad
A=A_\lambda,
\qquad
H=H_\lambda,
\]
and all covariant derivatives, traces, divergences, and contractions
below are taken with respect to $g$ unless explicitly indicated
otherwise. Since
\[
g_\lambda|_{T\Sigma_0}=\gamma_0
\]
is independent of $\lambda$, differentiation gives
\begin{equation}
\label{eq:qT-zero-first-var}
q^T=0
\qquad\text{on }\Sigma_0.
\end{equation}
In particular, the induced area form is independent of $\lambda$.
Indeed, if $\gamma_\lambda=g_\lambda|_{T\Sigma_0}$, then
\[
\partial_\lambda dA_{\gamma_\lambda}
=
\frac12
\operatorname{tr}_{\gamma_\lambda}
(\partial_\lambda\gamma_\lambda)
\,dA_{\gamma_\lambda}
=
\frac12\operatorname{tr}_{\gamma_0}(q^T)\,dA_{\gamma_0}
=0.
\]
Consequently,
\begin{equation}
\label{eq:H-integral-derivative}
\frac{d}{d\lambda}
\int_{\Sigma_0}H_\lambda\,dA_{\gamma_0}
=
\int_{\Sigma_0}\dot H\,dA_{\gamma_0}.
\end{equation}
We first compute $\dot H$. We provide the explicit computations even if some of them are standard for self consistency. Define the tangential vector field
$X_q\in T\Sigma_0$ by
\begin{equation}
\label{eq:Xq-definition}
\gamma_0(X_q,Y)=q(\nu,Y),
\qquad
Y\in T\Sigma_0.
\end{equation}
Differentiating
\[
g(\nu,\nu)=1
\]
gives
\[
q(\nu,\nu)+2g(\dot\nu,\nu)=0,
\]
hence
\begin{equation}
\label{eq:nudot-normal}
g(\dot\nu,\nu)
=
-\frac12q(\nu,\nu).
\end{equation}
Likewise, for every fixed tangential vector field
$Y\in T\Sigma_0$,
\[
g(\nu,Y)=0,
\]
and therefore
\[
q(\nu,Y)+g(\dot\nu,Y)=0.
\]
By \eqref{eq:Xq-definition},
\[
(\dot\nu)^T=-X_q.
\]
Combining this with \eqref{eq:nudot-normal},
\begin{equation}
\label{eq:nudot}
\dot\nu
=
-X_q-\frac12q(\nu,\nu)\nu.
\end{equation}
Let $\{e_1,e_2\}$ be a local tangent frame which is
$g$--orthonormal at the point under consideration.  We may further
choose it so that
\[
\nabla^{\Sigma}_{e_a}e_b=0
\]
at that point.  The second fundamental form is
\[
A_{ab}
=
g(\nabla_{e_a}\nu,e_b).
\]
The variation of the Levi--Civita connection is determined by
\begin{equation}
\label{eq:connection-variation}
2g\bigl((\partial_\lambda\nabla)_YZ,W\bigr)
=
(\nabla_Yq)(Z,W)
+
(\nabla_Zq)(Y,W)
-
(\nabla_Wq)(Y,Z).
\end{equation}
For completeness, this follows by differentiating the Koszul formula.
Applying \eqref{eq:connection-variation} with
\[
Y=e_a,\qquad Z=\nu,\qquad W=e_b
\]
gives
\begin{equation}
\label{eq:connection-var-boundary}
g\bigl((\partial_\lambda\nabla)_{e_a}\nu,e_b\bigr)
=
\frac12
\left[
(\nabla_{e_a}q)(\nu,e_b)
+
(\nabla_\nu q)(e_a,e_b)
-
(\nabla_{e_b}q)(e_a,\nu)
\right].
\end{equation}
Differentiating $A_{ab}$, we obtain
\begin{align}
\dot A_{ab}
&=
q(\nabla_{e_a}\nu,e_b)
+
g\bigl((\partial_\lambda\nabla)_{e_a}\nu,e_b\bigr)
+
g(\nabla_{e_a}\dot\nu,e_b).
\label{eq:Adot-start}
\end{align}
Since
\[
\nabla_{e_a}\nu=A_a{}^c e_c
\]
and $q^T=0$ on $\Sigma_0$,
\[
q(\nabla_{e_a}\nu,e_b)
=
A_a{}^c q(e_c,e_b)
=0.
\]
Furthermore, using \eqref{eq:nudot},
\begin{align}
g(\nabla_{e_a}\dot\nu,e_b)
&=
-g(\nabla_{e_a}X_q,e_b)
-\frac12
g\bigl(
\nabla_{e_a}(q(\nu,\nu)\nu),e_b
\bigr)
\nonumber\\
&=
-g(\nabla_{e_a}X_q,e_b)
-\frac12q(\nu,\nu)
g(\nabla_{e_a}\nu,e_b)
\nonumber\\
&=
-g(\nabla_{e_a}X_q,e_b)
-\frac12q(\nu,\nu)A_{ab}.
\label{eq:nudot-contribution}
\end{align}
Substitution of
\eqref{eq:connection-var-boundary} and
\eqref{eq:nudot-contribution} into \eqref{eq:Adot-start} yields
\begin{align}
\dot A_{ab}
={}&
\frac12
\left[
(\nabla_{e_a}q)(\nu,e_b)
+
(\nabla_\nu q)(e_a,e_b)
-
(\nabla_{e_b}q)(e_a,\nu)
\right]
\nonumber\\
&-
g(\nabla_{e_a}X_q,e_b)
-\frac12q(\nu,\nu)A_{ab}.
\label{eq:Adot-full}
\end{align}
Since the induced metric $\gamma_\lambda$ is independent of
$\lambda$,
\[
\partial_\lambda\gamma_\lambda^{ab}=0.
\]
Thus
\[
\dot H
=
\gamma_0^{ab}\dot A_{ab}.
\]
Tracing \eqref{eq:Adot-full}, the first and third terms cancel
$
\sum_{a=1}^{2}
(\nabla_{e_a}q)(\nu,e_a)
=
\sum_{a=1}^{2}
(\nabla_{e_a}q)(e_a,\nu),
$
because $q$ is symmetric.  Therefore
\begin{equation}
\label{eq:Hdot-first-form}
2\dot H
=
\sum_{a=1}^{2}
(\nabla_\nu q)(e_a,e_a)
-
2\operatorname{div}_{\Sigma_0}X_q
-
Hq(\nu,\nu).
\end{equation}
We now rewrite the first and third terms in invariant form.  By
definition,
\begin{align}
d(\operatorname{tr}_gq)(\nu)
&=
(\nabla_\nu q)(\nu,\nu)
+
\sum_{a=1}^{2}
(\nabla_\nu q)(e_a,e_a).
\label{eq:dtrq-normal}
\end{align}
Moreover,
\begin{align}
(\operatorname{div}_gq)(\nu)
&=
\sum_{i=1}^{3}
(\nabla_{e_i}q)(e_i,\nu)
\nonumber\\
&=
(\nabla_\nu q)(\nu,\nu)
+
\sum_{a=1}^{2}
(\nabla_{e_a}q)(e_a,\nu).
\label{eq:divq-normal}
\end{align}
It remains to compute the last sum.  At the chosen point,
\[
\nabla^\Sigma_{e_a}e_a=0.
\]
With our sign convention
\[
A(Y,Z)=g(\nabla_Y\nu,Z),
\]
and
\[
\nabla_{e_a}e_b
=
\nabla^\Sigma_{e_a}e_b-A_{ab}\nu.
\]
Hence
\[
\nabla_{e_a}e_a=-A_{aa}\nu
\]
at the chosen point, while
\[
\nabla_{e_a}\nu=A_a{}^b e_b.
\]
Consequently,
\begin{align}
(\nabla_{e_a}q)(e_a,\nu)
&=
e_a\!\left(q(e_a,\nu)\right)
-
q(\nabla_{e_a}e_a,\nu)
-
q(e_a,\nabla_{e_a}\nu)
\nonumber\\
&=
e_a\!\left(q(e_a,\nu)\right)
+
A_{aa}q(\nu,\nu)
-
A_a{}^bq(e_a,e_b).
\end{align}
Since $q^T=0$,
\[
q(e_a,e_b)=0,
\]
and thus
\[
(\nabla_{e_a}q)(e_a,\nu)
=
e_a\!\left(q(e_a,\nu)\right)
+
A_{aa}q(\nu,\nu).
\]
Summing over $a$ gives
\begin{equation}
\label{eq:divq-boundary-decomp}
\sum_{a=1}^{2}
(\nabla_{e_a}q)(e_a,\nu)
=
\operatorname{div}_{\Sigma_0}X_q
+
Hq(\nu,\nu).
\end{equation}
Subtracting \eqref{eq:divq-normal} from
\eqref{eq:dtrq-normal} and using
\eqref{eq:divq-boundary-decomp}, we obtain
\begin{align}
&
\left(
d\operatorname{tr}_gq-\operatorname{div}_gq
\right)(\nu)
\nonumber\\
&\qquad=
\sum_{a=1}^{2}
(\nabla_\nu q)(e_a,e_a)
-
\operatorname{div}_{\Sigma_0}X_q
-
Hq(\nu,\nu).
\label{eq:dtr-div-boundary}
\end{align}
Comparing \eqref{eq:Hdot-first-form} with
\eqref{eq:dtr-div-boundary} yields the pointwise boundary variation
formula
\begin{equation}
\label{eq:Hdot-final}
2\dot H
=
\left(
d\operatorname{tr}_gq-\operatorname{div}_gq
\right)(\nu)
-
\operatorname{div}_{\Sigma_0}X_q.
\end{equation}
Since $\Sigma_0$ is closed,
\[
\int_{\Sigma_0}
\operatorname{div}_{\Sigma_0}X_q\,dA_{\gamma_0}
=0.
\]
Hence, by \eqref{eq:H-integral-derivative},
\begin{equation}
\label{eq:Hvariation-integrated}
2\frac{d}{d\lambda}
\int_{\Sigma_0}H_\lambda\,dA_{\gamma_0}
=
\int_{\Sigma_0}
\left(
d\operatorname{tr}_gq-\operatorname{div}_gq
\right)(\nu)\,
dA_{\gamma_0}.
\end{equation}
We now use scalar flatness.  Since
\[
R_{g_\lambda}=0
\qquad\text{for every }\lambda,
\]
we have pointwise
\[
\partial_\lambda R_{g_\lambda}=0.
\]
Standard computation yields
\begin{equation}
\label{eq:scalar-curvature-linearization-full}
DR_g(q)
=
-\Delta_g(\operatorname{tr}_gq)
+
\operatorname{div}_g\operatorname{div}_gq
-
\langle q,\operatorname{Ric}(g)\rangle_g.
\end{equation}
Because $R_{g_\lambda}\equiv0$,
\[
DR_g(q)=0,
\]
and therefore
\begin{equation}
\label{eq:DR-zero}
0
=
-\Delta_g(\operatorname{tr}_gq)
+
\operatorname{div}_g\operatorname{div}_gq
-
\langle q,\operatorname{Ric}(g)\rangle_g.
\end{equation}
Integrating \eqref{eq:DR-zero} over $\Omega_0$ gives
\begin{align}
0
={}&
-\int_{\Omega_0}
\Delta_g(\operatorname{tr}_gq)\,dV_g
+
\int_{\Omega_0}
\operatorname{div}_g\operatorname{div}_gq\,dV_g
\nonumber\\
&-
\int_{\Omega_0}
\langle q,\operatorname{Ric}(g)\rangle_g\,dV_g.
\end{align}
By the divergence theorem,
\[
\int_{\Omega_0}
\Delta_g(\operatorname{tr}_gq)\,dV_g
=
\int_{\Sigma_0}
d(\operatorname{tr}_gq)(\nu)\,dA_{\gamma_0},
\]
and, regarding $\operatorname{div}_gq$ as a one-form,
\[
\int_{\Omega_0}
\operatorname{div}_g\operatorname{div}_gq\,dV_g
=
\int_{\Sigma_0}
(\operatorname{div}_gq)(\nu)\,dA_{\gamma_0}.
\]
Hence
\begin{align}
0
={}&
-\int_{\Sigma_0}
d(\operatorname{tr}_gq)(\nu)\,dA_{\gamma_0}
+
\int_{\Sigma_0}
(\operatorname{div}_gq)(\nu)\,dA_{\gamma_0}
\nonumber\\
&-
\int_{\Omega_0}
\langle q,\operatorname{Ric}(g)\rangle_g\,dV_g,
\end{align}
or equivalently,
\begin{equation}
\label{eq:boundary-bulk-identity}
\int_{\Sigma_0}
\left(
d\operatorname{tr}_gq-\operatorname{div}_gq
\right)(\nu)\,dA_{\gamma_0}
=
-
\int_{\Omega_0}
\langle q,\operatorname{Ric}(g)\rangle_g\,dV_g.
\end{equation}
Combining \eqref{eq:Hvariation-integrated} with
\eqref{eq:boundary-bulk-identity} yields
\[
2\frac{d}{d\lambda}
\int_{\Sigma_0}H_\lambda\,dA_{\gamma_0}
=
-
\int_{\Omega_0}
\left\langle
q_\lambda,\operatorname{Ric}(g_\lambda)
\right\rangle_{g_\lambda}
\,dV_{g_\lambda},
\]
which is \eqref{eq:total-H-first-variation}.
\end{proof}

\begin{remark}
For any smooth one-parameter family $g(t)$ satisfying
$R_{g(t)}=0, g(t)|_{T\partial\Omega}=\gamma$
with fixed boundary metric, Lemma~\ref{lem:BY-first-variation-scalar-flat} gives
\[
2\frac{d}{dt}\int_{\partial\Omega}H_{g(t)}\,dA_{\gamma}
=
-\int_{\Omega}
\langle \dot g,\operatorname{Ric}(g)\rangle_g\,dV_g.
\]
Thus, if such a family were to satisfy the Ricci-flow equation
$\dot g=-2\operatorname{Ric}(g)$, then
\[
\frac{d}{dt}m_{\mathrm{BY}}(g(t))
=
-\frac{1}{8\pi}
\int_{\Omega}
|\operatorname{Ric}(g(t))|^2\,dV_{g(t)}
\leq 0.
\]
However, an ordinary Ricci flow cannot remain in the scalar-flat class
unless it is Ricci flat, since
\[
\partial_t R=\Delta R+2|\operatorname{Ric}|^2.
\]
Hence any genuinely nontrivial evolution preserving both scalar
flatness and the induced boundary metric would necessarily require a
constrained modification of Ricci flow such as Fischer's conformal Ricci flow for the scalar flat case. The existence and well-posedness
of such a boundary-value problem on a compact manifold with boundary
are delicate and are not addressed here.
\end{remark}

\begin{remark}
A natural \textbf{formal} evolution within the scalar-flat class is the
projected Ricci flow. Let
\[
\mathcal S_{\gamma_0}
:=
\left\{
g:
R_g=0,
\qquad
g|_{T\partial\Omega}=\gamma_0
\right\},
\]
and let $\Pi_g$ denote the $L^2$-orthogonal projection onto the formal
tangent space
\[
T_g\mathcal S_{\gamma_0}
=
\left\{
q:
DR_g(q)=0,
\qquad
q^T|_{\partial\Omega}=0
\right\}.
\]
Formally, one may then consider
\[
\partial_t g
=
-2\Pi_g\operatorname{Ric}(g).
\]
By construction, this evolution preserves both scalar flatness and the
induced boundary metric. Moreover, Lemma~\ref{lem:BY-first-variation-scalar-flat}
gives
\[
\frac{d}{dt}m_{\mathrm{BY}}(g(t))
=
\frac{1}{16\pi}
\int_{\Omega}
\left\langle
\partial_t g,
\operatorname{Ric}(g)
\right\rangle_g
\,dV_g,
\]
and therefore
\[
\frac{d}{dt}m_{\mathrm{BY}}(g(t))
=
-\frac{1}{8\pi}
\left\|
\Pi_g\operatorname{Ric}(g)
\right\|_{L^2(\Omega,g)}^2
\leq 0.
\]
Thus the Brown--York mass is formally monotone nonincreasing along the
projected Ricci flow. The main analytic subtlety is whether the
projection $\Pi_g$ can be realized so that the resulting evolution is
a well-posed parabolic boundary-value problem on a compact manifold
with boundary. We do not address this issue here.
\end{remark}

\noindent We can now compute the Brown--York Hessian along the transverse-traceless direction. 
\begin{lemma}
\label{lem:TT-BY-Hessian}

Let $g_\lambda$ be the scalar-flat family, generated by
$0\neq h\in \mathcal{T}^{k,\alpha}$.  Define
$Z_h\in\Gamma(T\Sigma_0)$ by
\[
\gamma_0(Z_h,Y)=h(\nu_0,Y),
\qquad Y\in T\Sigma_0.
\]
Then
\[
\left.
\frac{d}{d\lambda}
\right|_{\lambda=0}
m_{\rm BY}(g_\lambda)
=0,
\]
and
\begin{equation}
\label{eq:BY-TT-second}
8\pi
\left.
\frac{d^2}{d\lambda^2}
\right|_{\lambda=0}
m_{\rm BY}(g_\lambda)
=
\frac14
\int_{\Omega_0}
|\nabla^\delta h|_\delta^2\,dV_\delta
+
\frac12
\int_{\Sigma_0}
\left(
A_0(Z_h,Z_h)
+
H_0|Z_h|_{\gamma_0}^2
\right)dA_{\gamma_0}.
\end{equation}
In particular, since $A_0>0$ and $H_0>0$ on $\Sigma_0$,
\[
\left.
\frac{d^2}{d\lambda^2}
\right|_{\lambda=0}
m_{\rm BY}(g_\lambda)
>0.
\]

\end{lemma}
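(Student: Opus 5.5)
The plan is to differentiate the first-variation identity of Lemma~\ref{lem:BY-first-variation-scalar-flat} once more, rather than computing $H_\lambda$ to second order directly.

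\emph{First variation.} By Proposition~\ref{prop:TT-scalar-flat-completion}, the family $g_\lambda=v_\lambda^4(\delta+\lambda h)$ is $C^2$ in $\lambda$ and satisfies $R_{g_\lambda}=0$ and $g_\lambda|_{T\Sigma_0}=\gamma_0$. Since $\dot v_0=0$, its velocity at $\lambda=0$ is $q_0=h$. Combining \eqref{eq:BY-glambda} with Lemma~\ref{lem:BY-first-variation-scalar-flat} gives
\[
16\pi\,\frac{d}{d\lambda}m_{\rm BY}(g_\lambda)
=
\int_{\Omega_0}\langle q_\lambda,\operatorname{Ric}(g_\lambda)\rangle_{g_\lambda}\,dV_{g_\lambda}.
\]
At $\lambda=0$ the right-hand side vanishes because $\operatorname{Ric}(\delta)=0$. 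This proves the first claim.

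\emph{Reduction of the second variation to a bulk integral.} Differentiate the same identity once more at $\lambda=0$. Every term in which the derivative falls on $q_\lambda$, on the inner product, or on $dV_{g_\lambda}$ still contains the factor $\operatorname{Ric}(\delta)=0$, so only the variation of the Ricci tensor survives:
\[
16\pi\,\frac{d^2}{d\lambda^2}\Big|_{0}m_{\rm BY}=\int_{\Omega_0}\langle h,D\operatorname{Ric}_\delta(h)\rangle\,dV_\delta .
\]
Next use the formula for $D\operatorname{Ric}_\delta$ already displayed in the proof of Lemma~\ref{lem:compact-TT}. The conditions $\operatorname{div}_\delta h=0$ and $\operatorname{tr}_\delta h=0$ kill all terms except $D\operatorname{Ric}_\delta(h)=-\tfrac12\Delta_\delta h$. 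Integrating by parts once with the outward normal $\nu_0$ then gives
\[
8\pi\,\frac{d^2}{d\lambda^2}\Big|_{0}m_{\rm BY}
=\frac14\int_{\Omega_0}|\nabla^\delta h|^2\,dV_\delta-\frac14\int_{\Sigma_0}\langle h,\nabla_{\nu_0}h\rangle\,dA_{\gamma_0}.
\]

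\emph{The boundary term.} This is where I expect the main difficulty: the signs must come out so that the boundary term equals $+\frac12\int_{\Sigma_0}(A_0(Z_h,Z_h)+H_0|Z_h|^2)$. The steps are as follows.
\begin{enumerate}
\item On $\Sigma_0$ we have $h^T=0$, so $\operatorname{tr}_\delta h=h(\nu_0,\nu_0)$. Hence $h(\nu_0,\nu_0)=0$ there, and the only surviving boundary components of $h$ are the mixed ones $h(\nu_0,Y)=\gamma_0(Z_h,Y)$.
\item Consequently $\langle h,\nabla_{\nu_0}h\rangle=2\,(\nabla_{\nu_0}h)(\nu_0,Z_h)$ on $\Sigma_0$.
\item Evaluate $\operatorname{div}_\delta h=0$ in a tangential direction $Y$ at a point where $\nabla^{\Sigma}e_a=0$:
\[
0=(\nabla_{\nu_0}h)(\nu_0,Y)+\sum_a(\nabla_{e_a}h)(e_a,Y).
\]
\item Expand the tangential sum as in the proof of \eqref{eq:divq-boundary-decomp}, using $\nabla_{e_a}e_b=-A_{0,ab}\nu_0$ at the point, $h^T=0$ and $h(\nu_0,\nu_0)=0$. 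This should give $\sum_a(\nabla_{e_a}h)(e_a,Y)=H_0\,\gamma_0(Z_h,Y)+A_0(Z_h,Y)$. Tangential derivatives of $h^T$ vanish, and the two curvature terms come from $-h(\nabla_{e_a}e_a,Y)$ and $-h(e_a,\nabla_{e_a}Y)$.
\item Therefore $(\nabla_{\nu_0}h)(\nu_0,Z_h)=-(A_0(Z_h,Z_h)+H_0|Z_h|^2)$. Substituting into the boundary term yields \eqref{eq:BY-TT-second}.
\end{enumerate}
I would double-check each sign against the convention $A_0(Y,Z)=\delta(\nabla_Y\nu_0,Z)$ fixed earlier.

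\emph{Strict positivity.} Both integrals in \eqref{eq:BY-TT-second} are nonnegative because $A_0>0$ and $H_0>0$. If their sum vanished, then $\nabla^\delta h\equiv0$, so $h$ would be a constant symmetric matrix on the connected domain $\Omega_0$. Because $\Sigma_0$ is a closed strictly convex surface, its tangent planes realize every plane in $\mathbb R^3$. The condition $h^T=0$ at every boundary point then forces $h(Y,Y)=0$ for every vector $Y$, hence $h=0$, contradicting $h\neq0$. Therefore the second derivative is strictly positive.
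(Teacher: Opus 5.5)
Your proposal is correct and follows the paper's proof essentially step for step. You differentiate the first-variation identity of Lemma~\ref{lem:BY-first-variation-scalar-flat} at $\lambda=0$ and reduce to $\int_{\Omega_0}\langle h, D\mathrm{Ric}_\delta(h)\rangle\,dV_\delta$ with $D\mathrm{Ric}_\delta(h)=-\frac12\Delta_\delta h$. You then integrate by parts and evaluate the boundary term through the tangential component of $\operatorname{div}_\delta h=0$, exactly as the paper does.

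The only deviation is in the strict-positivity step. You argue that a constant $h$ vanishing on $T_p\Sigma_0\times T_p\Sigma_0$ for every $p$ must vanish, by surjectivity of the Gauss map. The paper instead uses $Z_h\equiv0$ together with $h^T=0$ and $\operatorname{tr}_\delta h=0$ to get $h|_{\Sigma_0}=0$, and then concludes from parallelism. Both arguments are valid.
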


\begin{proof}
Set
\begin{equation}
\label{eq:F-total-mean}
F(\lambda)
:=
\int_{\Sigma_0}H_\lambda\,dA_{\gamma_0}.
\end{equation}
Since the induced boundary metric is independent of $\lambda$,
\[
m_{\rm BY}(g_\lambda)
=
\frac{1}{8\pi}
\int_{\Sigma_0}(H_0-H_\lambda)\,dA_{\gamma_0},
\]
and therefore
\begin{equation}
\label{eq:BY-vs-F}
8\pi m_{\rm BY}'(\lambda)=-F'(\lambda),
\qquad
8\pi m_{\rm BY}''(\lambda)=-F''(\lambda).
\end{equation}
By Lemma~\ref{lem:BY-first-variation-scalar-flat},
\begin{equation}
\label{eq:Fprime-Ric}
2F'(\lambda)
=
-
\int_{\Omega_0}
\left\langle
\dot g_\lambda,\text{Ric}(g_\lambda)
\right\rangle_{g_\lambda}
\,dV_{g_\lambda}.
\end{equation}
At $\lambda=0$,
\[
g_0=\delta,
\qquad
\dot g_0=h,
\qquad
\text{Ric}(\delta)=0,
\]
hence
\begin{equation}
\label{eq:Fprime-zero}
F'(0)=0,
\qquad
m_{\rm BY}'(0)=0.
\end{equation}

\noindent Differentiating \eqref{eq:Fprime-Ric} at $\lambda=0$, all terms
involving $\ddot g_0$, the variation of the inverse metric in the
contraction, and the variation of $dV_{g_\lambda}$ vanish because
they are multiplied by $\text{Ric}(\delta)$.  Thus
\begin{equation}
\label{eq:Fsecond-Ric-linearization}
2F''(0)
=
-
\int_{\Omega_0}
\left\langle
h,D\text{Ric}_\delta(h)
\right\rangle_\delta
\,dV_\delta.
\end{equation}
The linearization of the Ricci tensor at $\delta$ is
\begin{equation}
\label{eq:Ricci-linearization-flat}
\bigl(D\text{Ric}_\delta(h)\bigr)_{ij}
=
\frac12
\left(
\partial^k\partial_i h_{kj}
+
\partial^k\partial_j h_{ki}
-
\Delta_\delta h_{ij}
-
\partial_i\partial_j\tr_\delta h
\right).
\end{equation}
Since
\[
\partial^kh_{ki}=0,
\qquad
\tr_\delta h=0,
\]
we obtain
\begin{equation}
\label{eq:Ricci-linearization-TT}
D\text{Ric}_\delta(h)
=
-\frac12\Delta_\delta h.
\end{equation}
Consequently,
\begin{align}
\int_{\Omega_0}
\left\langle h,D\text{Ric}_\delta(h)\right\rangle_\delta\,dV_\delta
&=
-\frac12
\int_{\Omega_0}
h^{ij}\Delta_\delta h_{ij}\,dV_\delta
\nonumber\\
&=
\frac12
\int_{\Omega_0}
|\nabla^\delta h|_\delta^2\,dV_\delta
-
\frac12
\int_{\Sigma_0}
h^{ij}
(\nabla^\delta_{\nu_0}h)_{ij}
\,dA_{\gamma_0}.
\label{eq:hRicprime-integration}
\end{align}
It remains to compute the boundary contraction.  Fix
$p\in\Sigma_0$ and choose a $\delta$--orthonormal frame
\[
\{e_1,e_2,\nu_0\}
\]
near $p$, with $e_1,e_2$ tangent to $\Sigma_0$, such that
\[
\nabla^{\gamma_0}_{e_b}e_a=0,
\qquad
\nabla^\delta_{\nu_0}e_a=0
\qquad\text{at }p.
\]
Write
\[
Z_a:=h(\nu_0,e_a).
\]
The boundary condition $h^T=0$ gives
\begin{equation}
\label{eq:h-ab-zero}
h(e_a,e_b)=0
\qquad\text{on }\Sigma_0.
\end{equation}
Since $h$ is trace-free,
\[
0
=
\tr_\delta h
=
h(\nu_0,\nu_0)
+
\sum_{a=1}^2h(e_a,e_a),
\]
and hence
\begin{equation}
\label{eq:h-nunu-zero}
h(\nu_0,\nu_0)=0
\qquad\text{on }\Sigma_0.
\end{equation}
Thus only the mixed components $h(\nu_0,e_a)=Z_a$ can be nonzero on
$\Sigma_0$. The divergence-free condition in the $e_a$ direction gives
\begin{equation}
\label{eq:div-h-a}
0
=
(\div_\delta h)(e_a)
=
(\nabla_{\nu_0}h)(\nu_0,e_a)
+
\sum_{b=1}^2
(\nabla_{e_b}h)(e_b,e_a).
\end{equation}
With our convention
\[
A_0(Y,Z)
=
\delta(\nabla^\delta_Y\nu_0,Z),
\]
the connection equations are
\begin{equation}
\label{eq:Gauss-boundary-frame}
\nabla^\delta_{e_b}\nu_0
=
A_{0\,bc}e_c,
\qquad
\nabla^\delta_{e_b}e_a
=
\nabla^{\gamma_0}_{e_b}e_a
-
A_{0\,ba}\nu_0.
\end{equation}
Since $h(e_b,e_a)$ vanishes identically along $\Sigma_0$,
\[
e_b\!\left(h(e_b,e_a)\right)=0
\qquad\text{at }p.
\]
Therefore, at $p$,
\begin{align}
(\nabla_{e_b}h)(e_b,e_a)
&=
e_b\!\left(h(e_b,e_a)\right)
-
h(\nabla_{e_b}e_b,e_a)
-
h(e_b,\nabla_{e_b}e_a)
\nonumber\\
&=
A_{0\,bb}h(\nu_0,e_a)
+
A_{0\,ba}h(e_b,\nu_0)
\nonumber\\
&=
A_{0\,bb}Z_a+A_{0\,ba}Z_b.
\label{eq:tangential-derivative-h}
\end{align}
Summing in $b$,
\begin{equation}
\label{eq:tangential-div-h}
\sum_{b=1}^2
(\nabla_{e_b}h)(e_b,e_a)
=
H_0Z_a+(A_0Z_h)_a.
\end{equation}
Substitution into \eqref{eq:div-h-a} yields
\begin{equation}
\label{eq:normal-derivative-mixed-h}
(\nabla_{\nu_0}h)(\nu_0,e_a)
=
-H_0Z_a-(A_0Z_h)_a.
\end{equation}
Using \eqref{eq:h-ab-zero} and \eqref{eq:h-nunu-zero},
\begin{align}
h^{ij}(\nabla_{\nu_0}h)_{ij}
&=
2\sum_{a=1}^2
h(\nu_0,e_a)
(\nabla_{\nu_0}h)(\nu_0,e_a)
\nonumber\\
&=
-2H_0\sum_{a=1}^2Z_a^2
-
2\sum_{a,b=1}^2A_{0\,ab}Z_aZ_b
\nonumber\\
&=
-2H_0|Z_h|_{\gamma_0}^2
-
2A_0(Z_h,Z_h).
\label{eq:boundary-contraction-final}
\end{align}
Hence \eqref{eq:hRicprime-integration} becomes
\begin{equation}
\label{eq:hRicprime-final}
\begin{split}
\int_{\Omega_0}
\left\langle h,D\text{Ric}_\delta(h)\right\rangle_\delta\,dV_\delta
={}&
\frac12
\int_{\Omega_0}
|\nabla^\delta h|_\delta^2\,dV_\delta
\\
&+
\int_{\Sigma_0}
\left(
A_0(Z_h,Z_h)
+
H_0|Z_h|_{\gamma_0}^2
\right)dA_{\gamma_0}.
\end{split}
\end{equation}
Combining \eqref{eq:Fsecond-Ric-linearization} and
\eqref{eq:hRicprime-final},
\begin{equation}
\label{eq:Fsecond-final}
\begin{split}
F''(0)
={}&
-\frac14
\int_{\Omega_0}
|\nabla^\delta h|_\delta^2\,dV_\delta
\\
&-
\frac12
\int_{\Sigma_0}
\left(
A_0(Z_h,Z_h)
+
H_0|Z_h|_{\gamma_0}^2
\right)dA_{\gamma_0}.
\end{split}
\end{equation}
Using \eqref{eq:BY-vs-F},
\[
8\pi m_{\rm BY}''(0)=-F''(0),
\]
and therefore
\[
\begin{split}
8\pi m_{\rm BY}''(0)
={}&
\frac14
\int_{\Omega_0}
|\nabla^\delta h|_\delta^2\,dV_\delta
\\
&+
\frac12
\int_{\Sigma_0}
\left(
A_0(Z_h,Z_h)
+
H_0|Z_h|_{\gamma_0}^2
\right)dA_{\gamma_0},
\end{split}
\]
which proves \eqref{eq:BY-TT-second}.
Finally, strict convexity (note that $\Sigma_{0}$ has strictly positive Gauss curvature and $H_{0}>0$ and therefore $A_{0}$ is positive definite) gives
\[
A_0>0,
\qquad
H_0>0,
\]
so both terms in \eqref{eq:BY-TT-second} are nonnegative.  If
$m_{\rm BY}''(0)=0$, then
\[
\nabla^\delta h\equiv0,
\qquad
Z_h\equiv0.
\]
Thus $h$ is parallel.  On $\Sigma_0$,
\[
h^T=0,
\qquad
h(\nu_0,e_a)=0,
\qquad
h(\nu_0,\nu_0)=0,
\]
the last identity following from $\tr_\delta h=0$.  Hence
$h=0$ at every boundary point.  Since $\Omega_0$ is connected and
$\nabla^\delta h=0$, it follows that $h\equiv0$ on $\Omega_0$,
contrary to the hypothesis.  Therefore
\[
m_{\rm BY}''(0)>0.
\]
\end{proof}

\begin{corollary}[Quadratic expansion]
\label{cor:BY-quadratic-expansion}

Under the assumptions of Lemma~\ref{lem:TT-BY-Hessian}, one has
\begin{equation}
\label{eq:BY-expansion}
\begin{split}
m_{\rm BY}(g_\lambda)
={}&
\frac{\lambda^2}{64\pi}
\int_{\Omega_0}
|\nabla^\delta h|_\delta^2\,dV_\delta
\\
&+
\frac{\lambda^2}{32\pi}
\int_{\Sigma_0}
\left(
A_0(Z_h,Z_h)
+
H_0|Z_h|_{\gamma_0}^2
\right)dA_{\gamma_0}
+
o(\lambda^2).
\end{split}
\end{equation}
In particular,
\begin{equation}
\label{eq:BY-small-positive}
m_{\rm BY}(g_\lambda)>0
\end{equation}
for all sufficiently small $\lambda\neq0$.

\end{corollary}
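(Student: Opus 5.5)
The plan is to obtain the expansion \eqref{eq:BY-expansion} directly from Taylor's theorem applied to the scalar function $\lambda\mapsto m_{\rm BY}(g_\lambda)$. We feed in the three pieces of information established earlier: $m_{\rm BY}(g_0)=0$, the vanishing first derivative, and the explicit Hessian from Lemma~\ref{lem:TT-BY-Hessian}.

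The first step is regularity. By Proposition~\ref{prop:TT-scalar-flat-completion}, the map $\lambda\mapsto v_\lambda\in C^{k,\alpha}(\Omega_0)$ is smooth for $|\lambda|<\varepsilon$, with $k\geq2$. Hence $\lambda\mapsto g_\lambda=v_\lambda^4(\delta+\lambda h)$ is a smooth curve in $C^{k,\alpha}$. The outward $g_\lambda$-unit normal and the mean curvature $H_\lambda$ involve at most first derivatives of $g_\lambda$ restricted to $\Sigma_0$. Therefore $\lambda\mapsto H_\lambda\in C^{k-1,\alpha}(\Sigma_0)$ is smooth, and in particular $C^2$.

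Because $g_\lambda|_{T\Sigma_0}=\gamma_0$ is fixed, the reference term $H_0\,dA_{\gamma_0}$ does not depend on $\lambda$. By \eqref{eq:BY-glambda},
\[
m(\lambda):=m_{\rm BY}(g_\lambda)=\frac{1}{8\pi}\Bigl(\int_{\Sigma_0}H_0\,dA_{\gamma_0}-F(\lambda)\Bigr),
\]
with $F$ as in \eqref{eq:F-total-mean}. This $m$ is a $C^2$ real function near $0$.

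Next we assemble the Taylor data. At $\lambda=0$ we have $g_0=\delta$, so $H_{\lambda}$ at $\lambda=0$ coincides with the Euclidean mean curvature of $\Sigma_0$, and $m(0)=0$. Lemma~\ref{lem:TT-BY-Hessian} gives $m'(0)=0$ and the value of $8\pi m''(0)$. Taylor's theorem with Peano remainder then gives $m(\lambda)=\tfrac12 m''(0)\lambda^2+o(\lambda^2)$. Substituting \eqref{eq:BY-TT-second} and dividing by $16\pi$ yields exactly the coefficients $\frac{1}{64\pi}$ and $\frac{1}{32\pi}$ in \eqref{eq:BY-expansion}.

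For the positivity \eqref{eq:BY-small-positive}, set $c:=\tfrac12 m''(0)$. This is strictly positive by the final part of Lemma~\ref{lem:TT-BY-Hessian}, which uses $h\neq0$, strict convexity, and the parallel-tensor rigidity argument. Choose $\varepsilon'$ such that $|m(\lambda)-c\lambda^2|\leq\tfrac c2\lambda^2$ for $|\lambda|<\varepsilon'$. Then $m(\lambda)\geq\tfrac c2\lambda^2>0$ whenever $0<|\lambda|<\varepsilon'$.

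The only point that needs care is the $C^2$ regularity of $m$ together with the identification $m''(0)=-F''(0)/8\pi$ used in the lemma. This rests on the smooth dependence furnished by the implicit function theorem in Proposition~\ref{prop:TT-scalar-flat-completion}. The hypothesis $k\geq2$ is what guarantees that $H_\lambda$ is at least continuous in $\lambda$ uniformly on $\Sigma_0$, so that differentiation under the integral sign is justified.
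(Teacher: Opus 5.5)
Your proposal is correct and follows the paper's proof: you Taylor-expand $m_{\rm BY}(g_\lambda)$ at $\lambda=0$ using $m_{\rm BY}(g_0)=0$, $m_{\rm BY}'(0)=0$ and the Hessian from Lemma~\ref{lem:TT-BY-Hessian}, then absorb the $o(\lambda^2)$ remainder into half of the strictly positive quadratic coefficient. You also justify the $C^2$ dependence of $\lambda\mapsto m_{\rm BY}(g_\lambda)$ through the implicit-function-theorem smoothness of $v_\lambda$, a step the paper leaves implicit.
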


\begin{proof}
Since
\[
g_0=\delta,
\]
the Brown--York mass vanishes at $\lambda=0$:
\[
m_{\rm BY}(g_0)=0.
\]
By Lemma~\ref{lem:TT-BY-Hessian},
\[
m_{\rm BY}'(0)=0,
\]
and
\[
8\pi m_{\rm BY}''(0)
=
\frac14
\int_{\Omega_0}|\nabla^\delta h|_\delta^2\,dV_\delta
+
\frac12
\int_{\Sigma_0}
\left(
A_0(Z_h,Z_h)
+
H_0|Z_h|_{\gamma_0}^2
\right)dA_{\gamma_0}.
\]
Taylor expansion at $\lambda=0$ therefore gives
\[
m_{\rm BY}(g_\lambda)
=
\frac{\lambda^2}{2}m_{\rm BY}''(0)
+
o(\lambda^2),
\]
which is precisely \eqref{eq:BY-expansion}.
Set
\[
\mathcal Q(h)
:=
\frac{1}{64\pi}
\int_{\Omega_0}|\nabla^\delta h|_\delta^2\,dV_\delta
+
\frac{1}{32\pi}
\int_{\Sigma_0}
\left(
A_0(Z_h,Z_h)
+
H_0|Z_h|_{\gamma_0}^2
\right)dA_{\gamma_0}.
\]
Lemma~\ref{lem:TT-BY-Hessian} gives
\[
\mathcal Q(h)>0
\]
for every nonzero $h\in \mathcal{T}^{k,\alpha}$.  Hence
\[
m_{\rm BY}(g_\lambda)
=
\lambda^2\bigl(\mathcal Q(h)+o(1)\bigr).
\]
Thus, after decreasing $\varepsilon>0$ if necessary,
\[
\mathcal Q(h)+o(1)\geq \frac12\mathcal Q(h)>0
\]
whenever $0<|\lambda|<\varepsilon$, proving
\eqref{eq:BY-small-positive}.
\end{proof}

\subsection{Proof of the main theorem}
\label{sec:proof-main-theorem}
\noindent In this section, we finish the proof of the main theorem. Let $\tau$ be a minimizing admissible Wang-Yau time function for the
fixed physical surface $\Sigma$, and let
\[
(\widehat\Omega,\widehat g,X)
\]
be the corresponding Jang data.  Let $U>0$ solve
\[
-8\Delta_{\widehat g}U+R_{\widehat g}U=0
\quad\text{in }\widehat\Omega,
\qquad
U=1
\quad\text{on }\partial\widehat\Omega,
\]
and set
\[
g_{\rm sf}:=U^4\widehat g.
\]
By the scalar-flat reduction established above,
\begin{equation}
\label{eq:main-WY-BY-comparison}
\begin{split}
E_{\rm WY}(\Sigma,\tau)
\geq{}&
m_{\rm BY}
(\partial\widehat\Omega;\widehat\Omega,g_{\rm sf})
\\
&+
\frac{3}{8\pi}
\int_{\widehat\Omega}
|\widehat\nabla U|_{\widehat g}^2\,dV_{\widehat g}
+
\frac{1}{8\pi}
\int_{\widehat\Omega}
|\widehat\nabla U+UX|_{\widehat g}^2\,dV_{\widehat g}.
\end{split}
\end{equation}
In particular,
\begin{equation}
\label{eq:main-WY-ge-BY}
E_{\rm WY}(\Sigma,\tau)
\geq
m_{\rm BY}
(\partial\widehat\Omega;\widehat\Omega,g_{\rm sf}).
\end{equation}
By the TT-generated scalar-flat construction established above, after a
boundary-preserving identification
\[
\Phi:\Omega_{0}\longrightarrow\widehat\Omega,
\]
the scalar-flat conformal Jang metric can be represented in the form
\begin{equation}
\label{eq:main-proof-TT-representation}
\Phi^{*}g_{\rm sf}
=
G(\lambda h)
=
v_{\lambda}^{4}(\delta+\lambda h),
\end{equation}
where
\[
0\neq h\in C^{k,\alpha}
(\Omega_{0};S^{2}T^{*}\Omega_{0}),
\]
satisfies
\[
\operatorname{div}_{\delta}h=0,
\qquad
\operatorname{tr}_{\delta}h=0,
\qquad
h^{T}|_{\Sigma_{0}}=0,
\]
and $v_{\lambda}>0$ is the solution of
\[
-8\Delta_{\delta+\lambda h}v_{\lambda}
+
R_{\delta+\lambda h}v_{\lambda}=0,
\qquad
v_{\lambda}|_{\Sigma_{0}}=1.
\]  Since the Brown--York mass
is invariant under boundary-preserving diffeomorphisms,
\begin{equation}
\label{eq:BY-diffeo-invariance}
m_{\rm BY}
(\partial\widehat\Omega;\widehat\Omega,g_{\rm sf})
=
m_{\rm BY}
(\Sigma_0;\Omega_0,\mathfrak G(\lambda h)).
\end{equation}
Corollary~\ref{cor:BY-quadratic-expansion} yields
\begin{equation}
\label{eq:main-BY-expansion}
\begin{split}
m_{\rm BY}
(\Sigma_0;\Omega_0,\mathfrak G(\lambda h))
={}&
\frac{\lambda^2}{64\pi}
\int_{\Omega_0}
|\nabla^\delta h|_\delta^2\,dV_\delta
\\
&+
\frac{\lambda^2}{32\pi}
\int_{\Sigma_0}
\left(
A_0(Z_h,Z_h)
+
H_0|Z_h|_{\gamma_0}^2
\right)dA_{\gamma_0}
+
o(\lambda^2).
\end{split}
\end{equation}
The quadratic coefficient is strictly positive by
Lemma~\ref{lem:TT-BY-Hessian}.  Therefore
\begin{equation}
\label{eq:main-BY-positive}
m_{\rm BY}
(\partial\widehat\Omega;\widehat\Omega,g_{\rm sf})
>0
\end{equation}
for every sufficiently small nonzero $\lambda$. Combining \eqref{eq:main-WY-ge-BY} and
\eqref{eq:main-BY-positive} gives
\[
E_{\rm WY}(\Sigma,\tau)>0.
\]
Finally, since $\tau$ is minimizing,
\[
M_{\rm WY}(\Sigma)
=
\inf_{\tau'\in\mathcal A(\Sigma)}
E_{\rm WY}(\Sigma,\tau')
=
E_{\rm WY}(\Sigma,\tau),
\]
and hence
\begin{equation}
\label{eq:main-WY-positive}
M_{\rm WY}(\Sigma)>0.
\end{equation}
This completes the proof of the main theorem.


\begin{thebibliography}{}

\bibitem{alaee2023quasi}
A. Alaee, M. Khuri, S-T. Yau,
A Quasi-Local Mass, arXiv:2309.02770, 2023.


\bibitem{atiyah1975spectral1}
M.F. Atiyah, V.K. Patodi, I.M. Singer,
Spectral asymmetry and Riemannian geometry. I,
\textit{Mathematical Proceedings of the Cambridge Philosophical Society},
vol. 77, 43-69, 1975.

\bibitem{atiyah1975spectral2}
M.F. Atiyah, V.K. Patodi, I.M. Singer,
Spectral asymmetry and Riemannian geometry. I,
\textit{Mathematical Proceedings of the Cambridge Philosophical Society},
vol. 78, 405-432, 1975.

\bibitem{atiyah1975spectral3}
M.F. Atiyah, V.K. Patodi, I.M. Singer,
Spectral asymmetry and Riemannian geometry. I,
\textit{Mathematical Proceedings of the Cambridge Philosophical Society},
vol. 79, 71-99, 1976.


\bibitem{bar1999zero}
C. B{\"a}r,
Zero sets of solutions to semilinear elliptic systems of the first order,
\textit{Inventiones mathematicae}, vol. 138, 183-202, 1999.

\bibitem{bar}
C. B{\"a}r, W. Ballmann,
Guide to elliptic boundary value problems for Dirac-type operators,
2016, Springer.

\bibitem{Bar}
C. B\"ar,
Localization and semibounded energy---a weak unique continuation theorem,
\textit{Journal of Geometry and Physics}, vol. 34, 155-161, 2000.


\bibitem{bartnik1993quasi}
R. Bartnik,
Quasi-spherical metrics and prescribed scalar curvature,
\textit{Journal of differential geometry}, vol. 37, 21-71, 1993.


\bibitem{bondi1962gravitational}
H. Bondi, M.G.J Van der Burg, AWK, Metzner,
Gravitational waves in general relativity, VII. Waves from axi-symmetric isolated system,
\textit{Proceedings of the Royal Society of London. Series A. Mathematical and Physical Sciences},
vol. 269, 21-52, 1962.


\bibitem{brown1992quasilocal}
J.D. Brown, J.W. York,
Quasilocal energy in general relativity,
\textit{Mathematical aspects of classical field theory},
vol. 132, 129-142, 1992.

\bibitem{brown1993quasilocal}
J.D. Brown, J.W. York,
Quasilocal energy and conserved charges derived from the gravitational action,
\textit{Physical Review D}, vol. 47, 1407, 1993.


\bibitem{chen2011evaluating}
N.P. Chen, M.T. Wang, S.T. Yau,
Evaluating quasilocal energy and solving optimal embedding equation at null infinity,
\textit{Communications in mathematical physics},
vol. 308, 845-863, 2011.

\bibitem{chen2015conserved}
N.P. Chen, M.T. Wang, S.T. Yau,
Conserved quantities in general relativity: from the quasi-local level to spatial infinity,
\textit{Communications in Mathematical Physics},
vol. 338, 31-80, 2015, Springer.

\bibitem{chen2018evaluating}
N.P. Chen, M.T. Wang, S.T. Yau,
Evaluating small sphere limit of the Wang-Yau quasi-local energy,
\textit{Communications in Mathematical Physics},
vol. 357, 731-774, 2018, Springer.


\bibitem{chrusciel2019hyperbolic}
P.T. Chru{\'s}ciel, E. Delay,
The hyperbolic positive energy theorem,
arXiv:1901.05263, 2019.

\bibitem{chrusciel2003mass}
P.T. Chru{\'s}ciel, M. Herzlich,
The mass of asymptotically hyperbolic Riemannian manifolds,
\textit{Pacific journal of mathematics}, 231-264, 2003.


\bibitem{ginoux2009dirac}
N. Ginoux,
The dirac spectrum, vol. 1976, 2009.


\bibitem{jang1978positivity}
P.S. Jang,
On the positivity of energy in general relativity,
\textit{Journal of Mathematical Physics},
vol. 19, 1152-1155, 1978.


\bibitem{johnson1975bag}
K. Johnson,
The MIT bag model,
Acta Phys. Pol. B, vol. 6, 1975.


\bibitem{liu2003positivity}
C.M. Liu, S.T. Yau,
Positivity of quasilocal mass,
\textit{Physical review letters},
vol. 90, 231102, 2003.

\bibitem{liu2006positivity}
C.M. Liu, S.T. Yau,
Positivity of quasi-local mass II,
\textit{Journal of the American Mathematical Society},
vol. 19, 181-204, 2006.


\bibitem{lott}
J. Lott,
A spinorial quasilocal mass,
\textit{J. Math. Phys.}, vol. 64, 2023.


\bibitem{pyau}
P. Mondal, S-T Yau,
Quasi-local masses in General relativity and their positivity: Spinor approach,
arXiv:2401.13909, 2024.





\bibitem{pyau2} P. Mondal, S-T Yau, Aspects of quasilocal energy for gravity coupled to gauge fields, Physical Rev. D., Vol. 105, 104068, 2022. 






\bibitem{montiel2022compact}
S. Montiel,
Compact approach to the positivity of Brown-York mass,
arXiv:2209.07762, 2022.


\bibitem{murchadha2004comment}
N \'O, L.B. Szabados, K.P. Tod,
Comment on ``Positivity of quasilocal mass",
\textit{Physical review letters},
vol. 92, 259001, 2004.


\bibitem{parker1982witten}
T. Parker, C.H. Taubes,
On Witten's proof of the positive energy theorem,
\textit{Communications in Mathematical Physics},
vol. 84, 223-238, 1982.


\bibitem{penrose1965gravitational}
R. Penrose,
Gravitational collapse and space-time singularities,
\textit{Physical Review Letters},
vol. 14, 57, 1965.

\bibitem{penrose1982some}
R. Penrose,
Some unsolved problems in classical general relativity,
\textit{Annals of Mathematics Studies},
vol. 102, 631-668, 1982.


\bibitem{pogorelov1952regularity}
A.V. Pogorelov,
Regularity of a convex surface with given Gaussian curvature,
\textit{Matematicheskii Sbornik},
vol. 73, 88-103, 1952.


\bibitem{schoen1979proof}
R. Schoen, S.T. Yau,
On the proof of the positive mass conjecture in general relativity,
\textit{Communications in Mathematical Physics},
vol. 65, 45-76, 1979.

\bibitem{schoen1981proof}
R. Schoen, S.T. Yau,
Proof of the positive mass theorem. II,
\textit{ommunications in Mathematical Physics},
vol. 79, 231-260, 1981.

\bibitem{schoen1982proof}
R. Schoen, S.T. Yau,
Proof that the Bondi mass is positive,
\textit{Physical Review Letters},
vol. 48, 369, 1982.

\bibitem{schoen1}
R. Schoen,
Talk at a conference on Geometric Analysis and General Relativity,
University of Tokyo, November 2019.


\bibitem{shi2002positive}
Y. Shi, L-F. Tam,
Positive mass theorem and the boundary behaviors of compact manifolds with nonnegative scalar curvature,
\textit{Journal of Differential Geometry},
vol. 62, 79-125, 2002.


\bibitem{yau}
M.T. Wang, S.T. Yau,
Isometric embeddings into the Minkowski space and new quasi-local mass,
\textit{Communications in Mathematical Physics},
vol. 288, 919-942, 2009, Springer.

\bibitem{yau1}
M.T. Wang, S.T. Yau,
Quasilocal mass in general relativity,
\textit{Physical review letters},
vol. 102, 021101, 2009.

\bibitem{wang2010limit}
M.T. Wang, S.T. Yau,
Limit of quasilocal mass at spatial infinity,
\textit{Communications in Mathematical Physics},
vol. 296, 271-283, 2010, Springer.


\bibitem{witten1981new}
E. Witten,
A new proof of the positive energy theorem,
\textit{Communications in Mathematical Physics},
vol. 80, 381-402, 1981.





\end{thebibliography}
\end{document}